\newcommand{\rmv}[1]{}
\newcommand{\RM}{\mathsf {RM}}
\newcommand{\RLM}{\mathsf {RLM}}
\newcommand{\wh}{\widehat}
\newcommand{\MM}{\ensuremath{{\mathscr M}}}
\newcommand{\pv}[1]{\ensuremath{{\bf#1}}}
\newcommand{\ZZ}{\ensuremath{\mathbb Z}}
\newcommand{\ilim}{\varprojlim}
\newcommand{\inv}{^{-1}}
\newcommand{\p}{\varphi}
\newcommand{\pinv}{{\p \inv}}
\newcommand{\J}{\mathrel{{\mathscr J}}} 
\newcommand{\R}{\mathrel{{\mathscr R}}} 
\newcommand{\eL}{\mathrel{{\mathscr L}}} 
\newcommand{\HH}{\mathrel{{\mathscr H}}} 
\newcommand{\ov}[1]{\ensuremath{\overline {#1}}}
\newcommand{\til}[1]{\ensuremath{\widetilde {#1}}}
\newcommand{\NN}{\ensuremath{\mathbb N}}
\newcommand{\malce}{\mathbin{\hbox{$\bigcirc$\rlap{\kern-8.3pt\raise0,50pt\hbox{$\mathtt{m}$}}}}}
\newcommand{\ovFP}[2]{\ensuremath{\widehat{F}_{\overline{\mathbf #1}}(#2)}}
\newcommand{\Thmname}{Theorem}
\newcommand{\Propname}{Proposition}
\newcommand{\Lemmaname}{Lemma}
\newcommand{\Definitionname}{Definition}
\newtheorem{Thm}{\Thmname}[section]
\newtheorem{Prop}[Thm]{\Propname}
\newtheorem{Lemma}[Thm]{\Lemmaname}
{\theoremstyle{definition}
\newtheorem{Def}[Thm]{\Definitionname}}
{\theoremstyle{remark}
\newtheorem{Rmk}[Thm]{Remark}}
\newtheorem{Cor}[Thm]{Corollary}
\newtheorem*{Lemma*}{Lemma}
\numberwithin{equation}{section}
\title{Profinite Groups Associated to Sofic Shifts are Free}
\author{Alfredo Costa\and Benjamin Steinberg}
\address{CMUC, Department of Mathematics\\
  University of Coimbra, 3001-454 Coimbra\\
  Portugal\and School of Mathematics and Statistics\\
Carleton University \\
1125 Colonel By Drive\\
Ottawa, Ontario  K1S 5B6 \\
Canada}
\thanks{
The authors
  acknowledge the support of the research programme
     AutoMathA of ESF.     
  The first author was supported by FCT project
  PTDC/MAT/65481/2006 and FCT post-doctoral grant
  SFRH/BPD/46415/2008.  
  The second author was supported in part by NSERC and the DFG}
\email{amgc@mat.uc.pt\and bsteinbg@math.carleton.ca}
\date{August 4, 2009}
\keywords{Free profinite semigroups, free profinite groups, sofic shifts, symbolic dynamics}
\subjclass[2000]{20E18, 20M07}
\begin{document}
\begin{abstract}
We show that the maximal subgroup of the free profinite semigroup
associated by Almeida to an irreducible sofic shift is a free
profinite group, generalizing an earlier result of the second author
for the case of the full shift (whose corresponding maximal subgroup
is the maximal subgroup of the minimal ideal).  A corresponding result
is proved for certain relatively free profinite semigroups.
We also establish some other analogies between the
kernel of the free profinite semigroup and the $\J$-class associated to an
irreducible sofic shift.
\end{abstract}
\maketitle

\section{Introduction}
The study of maximal subgroups of free profinite semigroups has
recently received quite a bit of attention in the
literature~\cite{projective,minimalideal,primitivesub,Almeida:2005bshort,Alfredo2,AlmeidaVolkov2}.
Almeida discovered how to associate to each irreducible symbolic
dynamical system a maximal subgroup of a free profinite
semigroup~\cite{Almeida:2003b,Almeida:2005bshort,Almeida:2005USU}.
In~\cite{Almeida:2003b,Almeida:2005bshort} he announced
that this subgroup is invariant under conjugacy of dynamical systems,
but flaws were detected in the arguments sketched in~\cite{Almeida:2003b}.
The first author used a different approach to successfully prove
the conjugacy invariance of the maximal group~\cite{Alfredo2}.
The resolution of the flaws in~\cite{Almeida:2003b} led to the paper~\cite{Almeida&Costa:2009},
making possible for its authors to produce a proof
according to the original approach of Almeida; such a proof appears in~\cite{Costa:2007}.

In~\cite{primitivesub}, Almeida studied the case of minimal systems
associated to primitive substitutions and under certain hypotheses,
the corresponding maximal subgroup was shown to be a free profinite
group.  An example of a non-free maximal subgroup, with rank two,
associated to a primitive substitution was also obtained
in~\cite{primitivesub}.
It has since been proved by Almeida and the first author that the
maximal subgroup associated to the Thue-Morse dynamical system is
not free profinite and has rank three~\cite{Almeida&Costa:2009b}.
In particular, a question of Margolis from 1997 as to whether all maximal subgroups of a free profinite semigroup are free was answered in the negative.  Margolis also asked at this time whether all maximal subgroups were projective profinite groups and whether the maximal subgroup of the minimal ideal was a free profinite group.  The first question was answered in the positive by Rhodes and the second author~\cite{projective}, whereas the second was answered positively by the second author~\cite{minimalideal}.

The maximal subgroup of the minimal ideal is the maximal subgroup of
the free profinite semigroup on $X$
associated to the full shift $X^{\mathbb Z}$, which is an
example of an irreducible sofic shift.
It is then natural to ask whether the maximal subgroup associated to any irreducible sofic shift is free.  A sofic shift is minimal if and only if it is periodic.  In this case, Almeida and Volkov established early on that the corresponding maximal subgroup is free procyclic~\cite{AlmeidaVolkov2}.  In this paper we show that the maximal subgroup associated to a non-minimal irreducible sofic shift is a free profinite group of countable rank, thereby generalizing the result for the minimal ideal~\cite{minimalideal}.  An interesting feature of the proof is the crucial role played by the invariance of the subgroup under conjugacy of dynamical systems.   A consequence of our results is that there is a dense set of idempotents whose corresponding maximal subgroups are free.  Actually, we prove a stronger result that applies to certain relatively free profinite groups; the precise statement is left to the body of the article.

Several intermediate results established in the paper are likely to be of interest to researchers in symbolic dynamics and finite semigroup theory.  For instance, we characterize the syntactic semigroups of irreducible sofic shifts as precisely the generalized group mapping semigroups~\cite{Arbib,qtheor} with aperiodic $0$-minimal ideal.  Fischer covers can then be interpreted as the corresponding Sch\"utzenberger graphs.

The paper is organized as follows.  The first section consists of
preliminaries about semigroups and languages.  This is followed by a
section on sofic shifts.  The necessary background on sofic shifts and
their relationship with free profinite semigroups is given, as well as
several new results.  The fourth section reviews the wreath product of
partial transformation semigroups and establishes our notational
conventions for iterated wreath products.
The fifth section states our main result and proves it modulo a
technical lemma.
The following section proves the technical lemma, which is based on
the argument of~\cite{minimalideal}.
Other results from~\cite{AlmeidaVolkov2} about the minimal ideal of the free profinite semigroup are generalized
to the minimal $\J$-class associated to arbitrary irreducible sofic subshifts. Namely, in the seventh section the existence of computable idempotents in such $\J$-classes is established; in
the last section, using the notion of entropy,
we obtain a characterization of this $\J$-class that is used to prove
that, roughly speaking,
we can not reach its elements
starting with strict factors
and using only iterations of certain endomorphisms and compositions of
implicit operations with low arity.

\section{Semigroups and languages}
Throughout this paper we shall use basic notions from semigroup theory that can be found in standard texts~\cite{CP,Eilenberg,Arbib,qtheor,Almeida:book,grillet}. In particular, recall that Green's (equivalence) relation $\J$ is defined on a semigroup $S$ by putting $s\J t$ if $s$ and $t$ generate the same two-sided principal ideal.  The $\J$-class of an element $s$ is denoted $J_s$.  We use the notation $s\leq_{\J} t$ to indicate that the two-sided ideal generated by $s$ is contained in that generated by $t$.  This is a preorder descending to an order $S/{\J}$. Sometimes we use $s\geq_{\J} J$ as shorthand for $J_s\geq_{\J} J$.  Similarly defined are the $\R$- and $\eL$-relations, where right (respectively, left) ideals replace two-sided ideals.  Analogous notation is used for $\eL$- and $\R$-classes.  The intersection of $\R$ and $\eL$ is denoted $\HH$.  The $\HH$-class of an idempotent $e$ of a semigroup $S$ is a group, called the \emph{maximal subgroup} at $e$.  It can alte
 rnatively be defined as the group of units of the monoid $eSe$.

By a \emph{compact semigroup} $S$, we mean a non-empty semigroup with a compact Hausdorff topology such that multiplication is jointly continuous.  A \emph{profinite semigroup} is a projective limit of finite semigroups, or equivalently a compact totally disconnected semigroup. Basic information about compact and profinite semigroups can be found in~\cite[Chapter 3]{qtheor}. A $\J$-class of a compact semigroup $S$ is \emph{regular} if it contains an idempotent, or equivalent all its elements are von Neumann regular.  If $S$ is a compact semigroup and $e,f\in S$ are $\J$-equivalent idempotents, then $G_e\cong G_f$ and so each regular $\J$-class has a unique maximal subgroup up to isomorphism of topological groups.  Every compact semigroup has a unique minimal ideal, which is necessarily principal and hence closed.   The minimal ideal is always a regular $\J$-class.

If $X$ is a set, the free semigroup on $X$ is denoted $X^+$; the corresponding free monoid is $X^*$; the respective profinite completions are denoted $\wh{X^+}$ and $\wh{X^*}$.  A subset $L\subseteq X^+$ is often called a \emph{language}.  A language is \emph{rational} if it can be recognized by a finite state automaton.  Equivalently, $L$ is rational if there is a finite semigroup $S$ and a homomorphism $\p\colon X^+\to S$ so that $L=\pinv\p(L)$.  The category of onto morphisms recognizing $L\subseteq X^+$ has a terminal object $\lambda\colon X^+\to S_L$ called the \emph{syntactic morphism} of $L$.  The semigroup $S_L$ is called the \emph{syntactic semigroup} of $L$ and is the quotient of $X^+$ by the congruence that puts $x\equiv y$ if, for all $u,v\in X^*$, one has $uxv\in L\iff uyv\in L$.  See~\cite{EilenbergA} for details.

\section{Sofic shifts}

\subsection{Definitions and notation}
A good reference for the notions that we shall use here from symbolic dynamics is~\cite{MarcusandLind}.
Let $X^{\ZZ}$ be the set of biinfinite sequences of letters of $X$ indexed by $\ZZ$.
The \emph{shift} on $X^{\ZZ}$ is the bijective
map $\sigma_X$ (or just $\sigma$) from
$X^{\ZZ}$ to $X^{\ZZ}$ defined by
 $\sigma_X((x_i)_{i\in \ZZ})=(x_{i+1})_{i\in \ZZ}$.
 The \emph{orbit} of $x\in X^{\mathbb Z}$ is the set
 $\{\sigma^k(x)\mid k\in\ZZ\}$.
We endow $X^{\ZZ}$ with the product topology with respect to the
discrete topology of $X$.
A \emph{symbolic dynamical system}
is a non-empty closed subset $\mathscr X$ of some $X^{\ZZ}$ invariant under $\sigma$.
Symbolic dynamical systems are also called
\emph{shift spaces} or \emph{subshifts}.

Two subshifts $\mathscr X\subseteq X^\mathbb{Z}$ and $\mathscr Y\subseteq
Y^\mathbb{Z}$ are \emph{topologically conjugate} if there
is a homeomorphism $\varphi\colon \mathscr X\to \mathscr Y$ commuting with shift:
$\varphi\circ\sigma_X=\sigma_Y\circ\varphi$. Such a homeomorphism is
also called a \emph{topological conjugacy}. Since we will consider no
other form of conjugacy, we drop the reference to its topological
nature.

Let $x=(x_i)_{i\in\ZZ}$
be an element of $X^{\mathbb Z}$.
We may represent it by
\begin{equation*}
  x=\ldots x_{-3}x_{-2}x_{-1}.x_{0}x_{1}x_{2}\ldots
\end{equation*}
where the central dot indicates that the $0$ coordinate of $x$
is the letter at its immediate right.

  By a \emph{factor of $(x_i)_{i\in\ZZ}$} we mean a word
  $x_ix_{i+1}\cdots x_{i+n-1}x_{i+n}$
  (briefly denoted by $x_{[i,i+n]}$), where $i\in\ZZ$ and
  $n\geq 0$.
     If $\mathscr X$ is a subset of $X^{\ZZ}$ then we denote by
  $L(\mathscr X)$ the set
  of factors of elements of $\mathscr X$.
  A subset $L$ of a semigroup $S$ is said to be \emph{factorial} if it is closed under
  taking factors,
  and it is \emph{prolongable} if
   for every element $u$ of $L$
   there are elements $a,b\in S$ such that $aub\in L$.
   It is easy to prove that the correspondence
   $\mathscr X\mapsto L(\mathscr X)$ is an order isomorphism between
   the lattice of subshifts of
   $X^{\ZZ}$ and the lattice of non-empty, factorial, prolongable languages
   in $X^+$~\cite[Proposition 1.3.4]{MarcusandLind}.

A subset $L$ of a semigroup $S$ is said to be \emph{irreducible} if,
for all $u,v\in L$, there exists $w\in S$ so  that $uwv\in L$.  Notice
that, for factorial sets, irreducibility implies prolongability since if $u\in L$, then $uwuw'u\in L$ for some $w,w'\in S$ and hence $wuw'\in L$ since $L$ is factorial.
A shift $\mathscr X$ is said to be \emph{irreducible} if $L(\mathscr X)$ is an irreducible subset of $X^+$; this is equivalent to saying that $\mathscr X$ has a dense forward orbit~\cite{MarcusandLind}.  One says that $\mathscr X$ is \emph{minimal} if it contains no proper subshift.  Minimal shifts are irreducible~\cite{MarcusandLind}.

\subsection{Sofic shifts}
It is natural to consider those shifts whose associated language is rational.

\begin{Def}[Sofic shift]
A shift $\mathscr X$ is \emph{sofic} if $L(\mathscr X)$ is rational.
\end{Def}
A shift of \emph{finite type} is a shift
$\mathscr X$ such that $L(\mathscr X)=X^+\setminus X^{\ast}FX^{\ast}$ for some
finite set $F$. Therefore finite type shifts are sofic.  Sofic shifts are exactly the quotients (or factors) of shifts of finite type.

Recall that a shift is called \emph{periodic} if it is the (finite) orbit of a word of the form $x_u=\ldots uu.uuu\ldots$ with $u\in X^+$.  The following is well known.

\begin{Lemma}\label{periodic}
A sofic shift is minimal if and only if it is periodic.
\end{Lemma}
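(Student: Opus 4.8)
The plan is to prove both implications, relying on the characterization $\mathscr X \mapsto L(\mathscr X)$ as an order isomorphism onto factorial prolongable languages, together with the definitions of minimal, periodic, and sofic shifts.

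\smallskip

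For the ``if'' direction, suppose $\mathscr X$ is periodic, say $\mathscr X$ is the orbit of $x_u = \ldots uu.uuu\ldots$ for some $u \in X^+$. First I would observe that $\mathscr X$ is automatically sofic: $L(\mathscr X)$ consists of the factors of powers of $u$ (equivalently of factors of $u^\infty$ read cyclically), which is plainly a rational language, recognized by a finite automaton on at most $|u|$ states arranged in a cycle. Next, to see minimality, I would use that a shift is minimal precisely when it contains no proper nonempty subshift; via the order isomorphism this is equivalent to $L(\mathscr X)$ being minimal among nonempty factorial prolongable languages. Given any biinfinite sequence $y$ in a subshift $\mathscr Y \subseteq \mathscr X$, every factor of $y$ is a factor of some power $u^n$, hence $y$ lies in the closure of the forward orbit of $x_u$; since $\mathscr Y$ is closed and shift-invariant, once it is nonempty it must contain the whole orbit closure, i.e.\ all of $\mathscr X$. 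Thus $\mathscr X$ has no proper nonempty subshift and is minimal. (One could also simply cite that the orbit of $x_u$ is finite and every orbit of a point in $X^{\mathbb Z}$ has dense forward orbit in its own closure, which here is the whole finite orbit.)

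\smallskip

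For the ``only if'' direction, suppose $\mathscr X$ is minimal and sofic. By Lemma references already in the text, minimal shifts are irreducible, so $L = L(\mathscr X)$ is irreducible and rational. The key point is a well-known pumping-type argument: since $L$ is rational, it is recognized by a finite semigroup via some $\varphi\colon X^+ \to S$. Using irreducibility and the pigeonhole principle on the finite image, I would produce a nonempty word $u$ that is ``stable'' in the sense that $u$ is $L$-equivalent to $u^2$ under $\varphi$ and $u$ occurs as a factor of elements of $\mathscr X$ together with arbitrarily long runs of $u$'s around it; then show the periodic shift generated by $u$ is contained in $\mathscr X$. Concretely: pick any long enough factor $w \in L$; by finiteness some factor repeats an idempotent-like behavior, yielding a factor of the form $pvq$ with $v$ such that $v^n \in L$ for all $n$ (because $\mathscr X$ is shift-invariant and $L$ factorial/prolongable, a sufficiently long factor forces an arbitrarily-iterable infix). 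Then $x_v \in X^{\mathbb Z}$ has all its factors in $L$, so $x_v \in \mathscr X$, and the orbit of $x_v$ is a periodic subshift contained in $\mathscr X$. Minimality forces $\mathscr X$ to equal this orbit, hence $\mathscr X$ is periodic.

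\smallskip

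The main obstacle is making the pumping argument in the ``only if'' direction fully rigorous: one must extract from rationality of $L(\mathscr X)$ a genuinely biinfinite periodic point inside $\mathscr X$, not merely arbitrarily long periodic factors. The cleanest route is to work in the syntactic (or any recognizing finite) semigroup $S$: by irreducibility of $L$ one finds $u \in X^+$ with $\varphi(u)$ lying in a regular $\J$-class, replace $u$ by a suitable power so that $\varphi(u)$ is idempotent, and then check that all $u^n$ and hence all factors of $x_u$ are in $L$; prolongability/factoriality of $L$ handles the two-sided extension. Once $x_u \in \mathscr X$ its finite orbit is a nonempty subshift, and minimality closes the argument. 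The ``if'' direction and the soficity of periodic shifts are routine once the orbit-closure description of $L(\mathscr X)$ is in hand.
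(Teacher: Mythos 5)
Your proposal is correct and follows essentially the same route as the paper: the easy observation that periodic shifts are minimal, and for the converse a pumping argument on the infinite factorial rational language $L(\mathscr X)$ producing a non-empty word $u$ with $u^n\in L(\mathscr X)$ for all $n$, so that the orbit of $x_u$ is a periodic subshift which must equal $\mathscr X$ by minimality. Your semigroup-idempotent formulation of the pumping step and the appeal to irreducibility are just slightly more elaborate packaging of the same idea; the paper simply cites the Pumping Lemma directly.
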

\begin{proof}
It is easy to see that every periodic shift is minimal.  Suppose conversely that $\mathscr X$ is a minimal sofic shift.  Since $L(\mathscr X)$ is an infinite factorial rational language, by the Pumping Lemma we can find a non-empty word $u\in X^+$ so that $u^n\in L(\mathscr X)$ for all $n\geq 0$.  The orbit of $x_u=\ldots uuu.uuuu\ldots$ is then a subshift of $\mathscr X$, which is hence periodic by minimality.
\end{proof}

An example of an irreducible sofic shift is the \emph{full shift}
$X^{\mathbb Z}$.

\subsection{Coding}

Let $N$ be a positive integer.
Let $Y_{N}$ be the alphabet $X^{N}$.
We shall use the notation $[u]$ for a word $u\in X^{N}$ when
we want to consider it as a letter of $Y_N$.

Let $N>0$ and define $\beta_{N}\colon X^{\mathbb Z}\to Y_{N}^{\ZZ}$
by
\begin{equation*}
\beta_{N}((x_i)_{i\in\ZZ})=([x_{[i,i+N-1]}])_{i\in\ZZ}.
\end{equation*}
For example,
\begin{align*}
  \beta_3(\cdots x_{-3}x_{-2}x_{-1}.&x_{0}x_{1}x_{2}\cdots) =\\
  &\cdots [x_{-2}x_{-1}x_{0}][x_{-1}x_{0}x_{1}].[x_{0}x_{1}x_{2}][x_{1}x_{2}x_{3}]
  [x_{2}x_{3}x_{4}]\cdots
\end{align*}
Given a subset $\mathscr X$ of $X^{\mathbb Z}$, denote by $\mathscr X^{[N]}$
the set $\beta_{N}(\mathscr X)$.
The following is \cite[Example 1.5.10]{MarcusandLind}.

\begin{Lemma}\label{l:exercise}
If $\mathscr X$ is a subshift of $X^{\mathbb Z}$, then $\mathscr
X^{[N]}$ is a subshift of
${(Y_N)}^{\mathbb Z}$
  and the map $\beta_{N}\colon \mathscr X\to\mathscr X^{[N]}$ is a conjugacy.
\end{Lemma}

Given a word $u$, denote by $\mathsf{alph}(u)$ the set of letters occurring in $u$.  We extend this notation to shifts $\mathscr X\subseteq X^{\ZZ}$ by putting $\mathsf{alph}(\mathscr X) = X\cap L(\mathscr X)$.

\begin{Lemma}\label{reducedtonicecase}
Let $\mathscr X$ be a sofic shift, which is not minimal.
Then there is a conjugate shift $\mathscr Y$ of $\mathscr X$
for which there is a non-empty word $z\in L(\mathscr Y)$ such that $z^+\subseteq L(\mathscr Y)$ and $\mathsf{alph}(z)\subsetneq \mathsf{alph}(\mathscr Y)$.
\end{Lemma}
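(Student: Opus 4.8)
The plan is to pull an iterable word out of the rational language $L(\mathscr X)$ and, should that word happen to use every letter of the alphabet, to manufacture a fresh symbol by moving to a higher block presentation, where the alphabet is artificially enlarged. I would begin by discarding from $X$ any letter not occurring in $L(\mathscr X)$, so that $\mathsf{alph}(\mathscr X)=X$; this changes neither $\mathscr X$ as a dynamical system nor its conjugacy class. Since $\mathscr X$ is non-empty, $L(\mathscr X)$ is infinite, and since $\mathscr X$ is sofic it is rational, so, exactly as in the proof of Lemma~\ref{periodic}, the Pumping Lemma together with the factoriality of $L(\mathscr X)$ yields a non-empty word $u$ with $u^+\subseteq L(\mathscr X)$. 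If $\mathsf{alph}(u)\subsetneq X$ the proof is already complete: take $\mathscr Y=\mathscr X$ and $z=u$, noting that $u\in L(\mathscr X)$ forces $\mathsf{alph}(u)\subseteq\mathsf{alph}(\mathscr X)=X$. So everything hinges on the case $\mathsf{alph}(u)=X$.

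In that case I would first observe that the biinfinite word $x_u=\ldots uu.uuu\ldots$ belongs to $\mathscr X$: every factor of $x_u$ is a factor of a power of $u$, hence lies in $L(\mathscr X)$, so by the order isomorphism between subshifts and their non-empty factorial prolongable languages the (finite, hence closed) periodic subshift $\mathcal O(x_u)$ is contained in $\mathscr X$. It cannot equal $\mathscr X$, for a periodic shift is minimal while $\mathscr X$ is not; thus $\mathcal O(x_u)\subsetneq\mathscr X$, and the same order isomorphism gives $L(\mathcal O(x_u))\subsetneq L(\mathscr X)$. I would then fix a word $w\in L(\mathscr X)\setminus L(\mathcal O(x_u))$ and set $N=|w|$.

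Next I would pass to $\mathscr Y=\mathscr X^{[N]}$, which is conjugate to $\mathscr X$ by Lemma~\ref{l:exercise}. Its alphabet is $\{[v]\colon v\in L(\mathscr X)\cap X^{N}\}$, and in particular it contains the letter $[w]$. Because $\beta_N$ commutes with the shift, $\beta_N(x_u)$ is fixed by $\sigma^{|u|}$ and therefore has the form $\ldots zz.zzz\ldots$ for a non-empty word $z\in Y_N^{+}$ of length $|u|$; consequently $z^{n}$ occurs in $\beta_N(x_u)\in\mathscr X^{[N]}$ for every $n$, whence $z^{+}\subseteq L(\mathscr X^{[N]})$. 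On the other hand, the letters occurring in $z$ are precisely those $[v]$ with $v$ a length-$N$ factor of $x_u$, i.e.\ with $v\in L(\mathcal O(x_u))$; since $w$ has length $N$ and does not belong to $L(\mathcal O(x_u))$, the letter $[w]$ is not among them. Hence $\mathsf{alph}(z)\subsetneq\mathsf{alph}(\mathscr X^{[N]})$, and $\mathscr Y=\mathscr X^{[N]}$ with this $z$ satisfies the conclusion.

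The only real difficulty is the second and third steps, where one must display, inside a conjugate system, a symbol that the chosen periodic point cannot reach. The block presentation is exactly the tool for this, since its alphabet swells to all of $L(\mathscr X)\cap X^{N}$, whereas the periodic point $x_u$ realizes only the finitely many length-$N$ words that are factors of $x_u$; non-minimality of $\mathscr X$ is precisely what forces $L(\mathcal O(x_u))$ to be a proper sublanguage of $L(\mathscr X)$ and so supplies the missing block $[w]$. The remaining verifications---that $x_u$ genuinely lies in $\mathscr X$, and that $\beta_N(x_u)$ is periodic with its letters identified with the length-$N$ factors of $x_u$---are routine and rely only on the language--subshift correspondence already recorded above.
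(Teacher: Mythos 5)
Your proof is correct and follows essentially the same route as the paper: extract a pumpable word $u$, use non-minimality to find a witness word $w$ outside the language of the periodic orbit of $x_u$, and pass to the higher block presentation $\mathscr X^{[|w|]}$ where $[w]$ becomes a letter missing from the periodic block word. The only differences are cosmetic: you set $N=|w|$ directly instead of normalizing $|u|=|v|$ and invoking cyclic conjugacy as the paper does, and you add a harmless shortcut for the case $\mathsf{alph}(u)\subsetneq\mathsf{alph}(\mathscr X)$.
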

\begin{proof}
Suppose that $\mathscr X\subseteq X^{\mathbb Z}$ is a sofic shift, but not minimal.  By the Pumping Lemma, there is a non-empty word $w$ so that $w^+\subseteq L(\mathscr X)$.
The orbit of $x_w=\ldots www.wwww\ldots$ is periodic and thus a
minimal
shift contained in $\mathscr X$.
Since $\mathscr X$ is not minimal,
the set $L(\mathscr X)\setminus L(x_w)$
contains some element $v$.  Because $x_w=x_{w^m}$ for all $m>0$, we may as well suppose that $|w|\geq |v|$.
Using that $L(\mathscr X)$ is prolongable, we may in fact assume $|w|=|v|$.  The fact that $v\notin L(x_w)$ then translates into saying that $v$ is not a cyclic conjugate of $w$. Set $N=|w|=|v|$.

Recall that $\mathscr X^{[N]}$ is conjugate to $\mathscr X$ by
Lemma~\ref{l:exercise}.  Suppose that $w=a_1\cdots a_{N}$ with the
$a_i\in X$.  Then setting \[z = [a_1\cdots a_{N}][a_2\cdots
a_{N}a_1]\cdots [a_Na_1a_2\cdots a_{N-1}]\] we have $z^+\subseteq
L(\mathscr X^{[N]})$ (as $\beta_{N}(x_w)\in \mathscr X^{[N]}$). But
since $v$ is not a cyclic conjugate of $w$, we have $[v]\notin
\mathsf{alph}(z)$. On the other hand, since $v$ has length $N$ and is a factor of some $x\in \mathscr X$, the letter $[v]$ is
a factor of $\beta_{N}(x)\in \mathscr X^{[N]}$.  Thus $[v]\in \mathsf{alph}(\mathscr X^{[N]})$.
\end{proof}

\subsection{Regular $\J$-classes of compact semigroups}
 The goal of this subsection is to establish a bijection between regular $\J$-class of a compact semigroup $S$ and non-empty, factorial, irreducible subsets (\emph{FI-subsets}) of $S$ which are closed topologically.  This sets the stage for the connection with symbolic dynamics. We begin with a lemma on inverse images of such sets.  We remark that a factorial set is a union of $\J$-classes.
If $S$ is a compact semigroup, $S^1$ denotes $S$ with a functorially adjoined identity element $1$, which is topologically an isolated point.

\begin{Lemma}\label{pullback}
Let $\p\colon S\to T$ be a homomorphism of semigroups and suppose $\emptyset\neq L\subseteq T$. Then:
\begin{enumerate}
\item If $L$ is factorial, $\pinv (L)$ is factorial;
\item If $\p$ is surjective and $L$ is irreducible, then $\emptyset\neq \p\inv(L)$ is irreducible.
\end{enumerate}
\end{Lemma}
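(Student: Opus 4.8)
The plan is to prove both parts by direct manipulation of the definitions, using only the fact that $\p$ is a semigroup homomorphism and, for the second part, surjective.

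\medskip

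\noindent\emph{Part (1).} Suppose $L \subseteq T$ is factorial and let $s \in \pinv(L)$, so $\p(s) \in L$. I must show every factor of $s$ in $S^1$ lies in $\pinv(L)$; equivalently, that if $s = atb$ with $a, b \in S^1$ and $t \in S$, then $t \in \pinv(L)$. Applying $\p$ gives $\p(s) = \p(a)\p(t)\p(b)$ in $T^1$, where $\p$ is extended to $S^1 \to T^1$ by sending $1 \mapsto 1$. Thus $\p(t)$ is a factor of $\p(s) \in L$, and since $L$ is factorial, $\p(t) \in L$, i.e.\ $t \in \pinv(L)$. Note $\pinv(L) \neq \emptyset$ is automatic here only if $\p$ is surjective or $L$ meets the image; but since $L \subseteq T$ is nonempty and factorial (hence a union of $\J$-classes) this is where one uses either surjectivity or simply states nonemptiness as part of the hypothesis — in part (1) the paper only claims $\pinv(L)$ is factorial, which is vacuously fine if empty, so no obstacle arises.

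\medskip

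\noindent\emph{Part (2).} Now assume $\p$ is surjective and $L$ is irreducible (in particular nonempty, hence by the remark in the text prolongable once we also know it is factorial — but irreducibility alone suffices for what follows). First, $\pinv(L) \neq \emptyset$: pick any $\ell \in L$ and, by surjectivity, any $s \in S$ with $\p(s) = \ell$. To prove irreducibility of $\pinv(L)$, take $u, v \in \pinv(L)$, so $\p(u), \p(v) \in L$. By irreducibility of $L$ there exists $t \in T$ with $\p(u)\, t\, \p(v) \in L$. By surjectivity of $\p$, choose $w \in S$ with $\p(w) = t$. Then $\p(uwv) = \p(u)\p(w)\p(v) = \p(u)\, t\, \p(v) \in L$, so $uwv \in \pinv(L)$, as required.

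\medskip

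This is entirely routine; there is no real obstacle. The only point requiring a moment's care is the bookkeeping around $S^1$ versus $S$ in part (1) — one must allow the "witnesses" $a, b$ to be the adjoined identity and check that the extension of $\p$ to $S^1 \to T^1$ is still multiplicative, which it is by the functoriality of adjoining an identity. Surjectivity is genuinely needed in part (2) (to lift the connecting element $t$ back to $S$) and in guaranteeing $\pinv(L) \neq \emptyset$, but is not needed in part (1).
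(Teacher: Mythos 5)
Your proof is correct and follows essentially the same route as the paper's: push factors forward through $\p$ and use factoriality of $L$ for part (1), and lift the connecting element via surjectivity for part (2). The extra remarks about $S^1$ versus $S$ and about nonemptiness are harmless bookkeeping that the paper leaves implicit.
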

\begin{proof}
For (1), if $w$ is a factor of $\pinv (L)$, evidentally $\p(w)$ is a factor of $L$ and so $\p(w)\in L$, whence $w\in \pinv (L)$.  Thus $\pinv(L)$ is factorial.   On the other hand, suppose $\p$ is onto and $L$ is irreducible.  Assume $u,v\in \pinv (L)$ and choose $w\in T$ so that $\p(u)w\p(v)\in L$.  Then if $\til w$ is a preimage of $w$, one has $u\til wv\in \pinv (L)$.  This completes the proof.
\end{proof}

The following proposition characterizes the closed FI-subsets of a compact semigroup.  It combines the fundamental idea of Rhodes for lifting regular $\J$-classes~\cite{Arbib,qtheor} and an idea of Almeida on irreducible shifts~\cite{Almeida:2005bshort}.  If $A$ is a subset of a semigroup, we denote by $\mathrm{Fact}(A)$ the set of all factors of $A$.

\begin{Prop}\label{JclassesasFI}
Let $S$ be a compact semigroup.
\begin{enumerate}
\item  If $A$ is a closed, non-empty, factorial, irreducible subset, then there is a unique minimal $\J$-class $J(A)$, called the \textbf{apex} of $A$, such that $J(A)\subseteq A$.  Moreover, $J$ is regular and $A=\mathrm{Fact}(J(A))$.
\item If $J$ is a regular $\J$-class, then $\mathrm{Fact}(J)$ is a closed, non-empty, factorial, irreducible subset with apex $J$.
\end{enumerate}
Consequently, regular $\J$-classes of $S$ are in bijection with closed FI-subsets.
\end{Prop}
\begin{proof}
To prove (1), first observe that $A$ is a union of $\J$-classes. We next show that every element $s\in A$ is $\J$-above a minimal $\J$-class of $A$.  Let $\mathscr C$ be the set of all closed ideals of $S$ intersecting $A$, which are contained in the principal ideal generated by $s$.  Then $\mathscr C$ is non-empty (as it contains the principal ideal generated by $s$) and by compactness the intersection of any descending chain of elements of $\mathscr C$ meets $A$ in a non-empty subset, and hence belongs to $\mathscr C$ .  Thus $\mathscr C$ has a minimal element $I$ by Zorn's Lemma.  If $a\in A\cap I$, then the principal ideal generated by $a$ intersects $A$ and is contained in $I$.  Thus $I$ is a principal ideal by minimality and so $A$ contains a minimal $\J$-class, which is $\J$-below $s$.

Suppose $J_1,J_2$ are minimal $\J$-classes of $A$ (perhaps equal).  Let $u\in J_1$ and $v\in J_2$.  Then by irreducibility, there exists $w\in S$ so that $uwv\in A$.  Clearly, $uwv\leq_{\J} J_1,J_2$.  We conclude by minimality that $J_1=J_2$ and $J_1^2\cap J_1$ is non-empty.  Thus $J_1$ is regular and unique.  From now on we denote it $J(A)$.  Clearly, $\mathrm{Fact}(J(A))\subseteq A$.  Conversely, if $a\in A$, then we know $a$ is $\J$-above a minimal $\J$-class of $A$, which must be $J(A)$ by uniqueness.  This establishes (1).

For (2), first note that if $\{x_{\alpha}\}$ is a net in
$\mathrm{Fact}(J)$ converging to $x\in S$, then we can find, for each
$\alpha$, elements $u_{\alpha},v_{\alpha}\in S^1$ so that
$u_{\alpha}x_{\alpha}v_{\alpha}\in J$.  By passing to a subnet, we may
assume that $u_{\alpha}\to u$ and $v_{\alpha}\to v$ and hence
$u_{\alpha}x_{\alpha}v_{\alpha}\to uxv$.  Then since $\J$-classes of a
compact semigroup are closed~\cite[Proposition 3.1.9]{qtheor} it
follows that $uxv\in J$ and so $x\in \mathrm{Fact}(J)$.  We conclude
$\mathrm{Fact}(J)$ is closed.  It is clearly factorial.  Suppose
$u,v\in \mathrm{Fact}(J)$.  Then we can find $x,y,x',y'\in S^1$ so
that $xuy,x'vy'\in J$.  Since $J$ is regular, we can find an element
$w\in J$ so that $xuywx'vy'\in J$.  Indeed, there is an $\R$-class $R$
of $J$ so that $L_{xuy}\cap R$ contains an idempotent and an
$\eL$-class $L$ of $J$ with $R_{x'vy'}\cap L$ containing an
idempotent. We can then take $w$ to be any element of the $\HH$-class
$R\cap L$.  Hence $uywx'v\in \mathrm{Fact}(J)$ and $ywx'\in S$.  Thus $\mathrm{Fact}(J)$ is irreducible.  Evidentally, $J$ is minimal in $\mathrm{Fact}(J)$ and hence is the apex of $\mathrm{Fact}(J)$.  This completes the proof.
\end{proof}

As a corollary,
we deduce a result on lifting regular $\J$-classes for compact semigroups~\cite[Lemma 3.1.14]{qtheor}.
The analogue for finite semigroups is well known~\cite{qtheor,Arbib}.  If $A$ is a subset of a semigroup $S$, then $E(A)$ denotes the idempotent elements of $A$.

\begin{Lemma}\label{liftclasses}
Let $\p\colon S\to T$ be a continuous surjective homomorphism of compact semigroups and let $J$ be a regular $\J$-class of $T$.  Then:
\begin{enumerate}
\item There is a unique minimal $\J$-class $J'$ of $S$ so that $\p(J')\subseteq J$, which moreover is the apex of $\pinv(\mathrm{Fact}(J))$;
\item The $\J$-class $J'$ is regular and $\p(J')=J$;
\item Each $\R$-class, $\eL$-class and $\HH$-class of $J'$ maps onto a corresponding class of $J$;
\item $\p(E(J'))=E(J)$.
\end{enumerate}
In particular, each maximal subgroup of $J'$ maps homomorphically onto a maximal subgroup of $J$ and each maximal subgroup of $J$ is the image of a maximal subgroup of $J'$.
\end{Lemma}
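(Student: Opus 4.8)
The plan is to derive everything from Proposition~\ref{JclassesasFI} and Lemma~\ref{pullback}, reducing the lemma to one substantive point, the lifting of idempotents. Put $A=\mathrm{Fact}(J)$. Since $J$ is regular, Proposition~\ref{JclassesasFI}(2) tells us that $A$ is a closed FI-subset of $T$ with apex $J$. As $\p$ is continuous and surjective and $A$ is closed, $\pinv(A)$ is closed, and it is a (non-empty) FI-subset of $S$ by Lemma~\ref{pullback}; so Proposition~\ref{JclassesasFI}(1), applied to $\pinv(A)$, produces a regular $\J$-class $J':=J(\pinv(A))$ with $\pinv(A)=\mathrm{Fact}(J')$. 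This already gives the regularity asserted in~(2). For the rest of~(1) I identify $J'$. Homomorphisms preserve $\leq_\J$, so $\p(J')$ lies in a single $\J$-class of $T$, which is $\geq_\J J$ because $J'\subseteq\pinv(A)$; conversely, taking $t\in J$, a preimage $s\in\pinv(A)=\mathrm{Fact}(J')$, and $j'\in J'$ with $j'\leq_\J s$, and applying $\p$, one gets $\p(j')\leq_\J t$, so the $\J$-class of $\p(j')$ is also $\leq_\J J$; hence $\p(J')\subseteq J$. And if $K$ is any $\J$-class of $S$ with $\p(K)\subseteq J$, then $K\subseteq\pinv(J)\subseteq\pinv(A)=\mathrm{Fact}(J')$, so every element of $K$ is $\geq_\J J'$, i.e.\ $K\geq_\J J'$. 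Thus $J'$ is the unique $\leq_\J$-minimal $\J$-class of $S$ whose image lies in $J$, which proves~(1).

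The heart of the matter is~(4), which I would establish before~(2) and~(3). Fix $f\in E(J)$. Choose an idempotent $e'_0\in E(J')$ (possible since $J'$ is regular) and put $g=\p(e'_0)\in E(J)$. Since $T$ is compact, the idempotents $f$ and $g$, being $\J$-equivalent, lie in a common $\D$-class; hence they are conjugate, so there are $u,v\in T$ with $uv=f$ and $vu=g$. Choosing preimages $\til u,\til v\in S$ and setting $h=\til u\,e'_0\,\til v$, one computes $\p(h)=ugv=u(vu)v=(uv)(uv)=f$. Let $e'=h^{\omega}$ be the unique idempotent of the closed monogenic subsemigroup of $S$ generated by $h$; since $\p$ is a continuous homomorphism it carries $h^{\omega}$ to $\p(h)^{\omega}=f^{\omega}=f$, so $e'$ is an idempotent with $\p(e')=f$. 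Finally $e'\in hS^1\subseteq S^1e'_0S^1$ gives $e'\leq_\J e'_0$, while $e'\in\pinv(f)\subseteq\pinv(A)=\mathrm{Fact}(J')$ gives $e'\geq_\J J'$; therefore $e'\in J'$. This proves $E(J)\subseteq\p(E(J'))$, and the reverse inclusion is immediate from $\p(J')\subseteq J$, so~(4) holds.

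With~(4) in hand, (2) and~(3) follow by the same sandwiching idea, using that the compact semigroup $S$ is stable. For the remaining inclusion $J\subseteq\p(J')$ in~(2): given $t\in J$, pick idempotents $f_1\R t$ and $f_2\eL t$ in $J$, lift them through~(4) to idempotents $e'_1,e'_2\in E(J')$, and lift $t$ to $\til t\in\pinv(t)$; then $\p(e'_1\til t e'_2)=f_1tf_2=t$, and since $e'_1\til t e'_2\leq_\J e'_1$ while $e'_1\til t e'_2\in\pinv(J)\subseteq\mathrm{Fact}(J')$ makes it $\geq_\J J'$, it lies in $J'$. For~(3), let $R'$ be an $\R$-class of $J'$ and choose an idempotent $e'_R\in R'$; homomorphisms preserve $\leq_\R$, so $\p(R')$ lies in an $\R$-class $R$ of $T$, and $R\subseteq J=\p(J')$. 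Given $r\in R$, pick $s'\in J'$ with $\p(s')=r$; then $e'_Rs'\in\pinv(J)\subseteq\mathrm{Fact}(J')$ and $e'_Rs'\leq_\J e'_R$, so $e'_Rs'\in J'$, and being $\leq_\R e'_R$ it is in fact $\R$-equivalent to $e'_R$ by stability, so $e'_Rs'\in R'$; moreover $\p(e'_Rs')=\p(e'_R)r=r$, since $r$ is $\R$-equivalent to the idempotent $\p(e'_R)$. Thus $\p(R')=R$; the $\eL$-case is dual, and the $\HH$-case follows by inserting idempotents of $R'$ and $L'$ on the two sides. The final assertion is then immediate: restricting $\p$ to a maximal subgroup $H_{e'}$ of $J'$ yields, by~(3), a continuous surjective homomorphism onto the maximal subgroup $H_{\p(e')}$ of $J$, and by~(4) every idempotent of $J$, hence every maximal subgroup of $J$, arises in this way.

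The step I expect to be the main obstacle is~(4). A preimage of $f$ that happens to be idempotent need lie neither in $J'$ nor, once pushed down into $J'$, over $f$ again; the remedy is to conjugate the chosen idempotent $e'_0\in E(J')$ by preimages of the elements $u,v$ that realize the conjugacy $uv=f$, $vu=g$, since then the identity $ugv=(uv)^2=f$ holds the image exactly at $f$ while the $\omega$-power restores idempotency, and the two $\J$-comparisons above confine $e'$ to $J'$. Beyond Proposition~\ref{JclassesasFI} and Lemma~\ref{pullback}, the inputs are standard: homomorphisms preserve the Green preorders; a compact semigroup is stable, satisfies $\mathscr D=\mathscr J$, and assigns to each element a closed monogenic subsemigroup with a unique idempotent~\cite{qtheor}; and idempotents lying in a common $\D$-class of any semigroup are conjugate~\cite{CP}.
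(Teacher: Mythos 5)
Your proposal is correct, and parts (1) and (3) follow essentially the paper's route (pull back $\mathrm{Fact}(J)$ via Lemma~\ref{pullback}, apply Proposition~\ref{JclassesasFI}, then sandwich elements into $J'$ using minimality together with stability). The genuine divergence is in the logical order and in the proof of idempotent lifting. The paper proves $\p(J')=J$ first and without idempotents: given $x\in J$ with preimage $s\in\mathrm{Fact}(J')$, it pads $s$ on both sides to land in $J'$ and then pads again to hit $x$ exactly, using minimality of $J'$ at each stage. For (4) it then observes that $A=\pinv(e)\cap J'$ is non-empty, closed, and closed under multiplication (again by minimality of $J'$), hence is a compact semigroup and contains an idempotent. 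You instead prove (4) first by a constructive argument --- conjugate an arbitrary idempotent $e'_0\in E(J')$ by preimages of the elements $u,v$ realizing the conjugacy of the $\D$-equivalent idempotents $f$ and $\p(e'_0)$, then take the $\omega$-power to restore idempotency while the two $\J$-comparisons pin the result inside $J'$ --- and then deduce $J\subseteq\p(J')$ by sandwiching a preimage of $t$ between lifted idempotents. Both arguments are sound; the paper's proof of (4) is shorter and uses only the existence of idempotents in compact semigroups, whereas yours needs $\D=\J$ for compact semigroups, the conjugacy of $\D$-equivalent idempotents, and uniqueness of the idempotent in a closed monothetic subsemigroup, but it has the advantage of producing the lifted idempotent explicitly and of making (2) and (3) fall out of (4) uniformly by the same insertion trick (the paper instead handles maximal subgroups separately and transports the result to other $\HH$-classes via Green's Lemma). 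One small point worth making explicit if you write this up: when you assert $\p(h^\omega)=\p(h)^\omega$, say that this is because $\p(h^\omega)$ is an idempotent of $\ov{\langle\p(h)\rangle}$ and such semigroups have a unique idempotent.
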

\begin{proof}
By Proposition~\ref{JclassesasFI}, the set $\mathrm{Fact}(J)$ is a closed FI-subset.  Hence, by Lemma~\ref{pullback}, $\pinv(\mathrm{Fact}(J))$ is a closed FI-subset and thus contains by Proposition~\ref{JclassesasFI}  a unique minimal $\J$-class $J'$, which moreover is regular, and $\pinv(\mathrm{Fact}(J)) = \mathrm{Fact}(J')$.  Suppose $x\in J$ where $x=\p(s)$ with $s\in \mathrm{Fact}(J')$.  Then we can find $u,v\in S^1$ so that $usv\in J'$.  Then $\p(usv)\in J$.  Since $\p$ is onto, we can find $a,b\in S^1$ so that $\p(ausvb)=x$.  Hence $ausvb\in J'$ by minimality.  We conclude $\p(J')\supseteq J$.  But $\p(J')$ must be contained in a $\J$-class of $T$ so $\p(J')=J$.

Suppose $R$ is an $\R$-class of $J'$ and that $t\R \p(r)$ with $r\in R$.  Then we can find $s\in S$ so that $\p(r)\p(s)=t$.  Then $rs\leq_{\J} r$ and so $rs\in J'$ by minimality, whence $rs\R r$ by stability of compact semigroups~\cite[Chapter 3]{qtheor}.  Thus $t\in \p(R)$.

Next suppose $G'$ is a maximal
subgroup of $J'$ with identity $e$.  Let $G$ be the $\HH$-class of
$J$ containing the image of $G'$.  If $g\in G$ and $s\in S$ is any
preimage of $g$, then $\p(ese) = g$ and so, by minimality, $ese\in
J'$. Stability shows in fact $ese\in G'$.  Thus $\p(G')= G$.  Now let
$H'$ be an $\HH$-class $\eL$-equivalent to $G'$ and let $H$ be the
$\HH$-class of $J$ into which $H'$ maps.  Fix $h\in H'$; so $\p(h) \in
H$. Green's Lemma  implies that \[H = \p(h) G = \p(h) \p(G') = \p(hG') =
\p(H').\] Because every $\HH$-class in a regular $\J$-class is
$\eL$-equivalent to a maximal subgroup, this completes the proof of (3).

Finally, to prove (4) suppose that $e\in E(J)$ and let $A=\pinv(e)\cap J'\neq\emptyset$.    Since $\J$-classes in a compact semigroup are closed~\cite[Proposition 3.1.9]{qtheor}, the set $A$ is closed.  But if $s,t\in A$, then $\p(st)=e$ and $st\leq_{\J} s$.  Thus by minimality $st\in J'$ and so $st\in A$.  Thus $A$ is compact semigroup and hence contains an idempotent~\cite[Corollary 3.1.2]{qtheor}.
\end{proof}

If $\pv V$ is a variety of finite semigroups, then $\wh{F}_{\pv V}(X)$
denotes the free pro-$\pv V$ semigroup on
$X$~\cite{Almeida:book,qtheor}.

\begin{Lemma}\label{inherited-properties} 
Let $L$ be a subset of $X^+$.
Let $\iota\colon X^+\to \wh F_{\pv V}(X)$ be the canonical morphism.
If $L$ is irreducible then the subset $\ov{\iota(L)}$ of $\wh F_{\pv V}(X)$ is irreducible.
Moreover, if $L$ is factorial and $\pv V$ contains the syntactic
semigroup of $L$, 
then $\ov{\iota(L)}$ is factorial.
\end{Lemma}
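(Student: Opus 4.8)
The plan is to establish the two assertions separately; in both cases the point is to push a combinatorial property of $L$ across the density of $\iota(X^+)$ in $\wh F_{\pv V}(X)$, using compactness of $\wh F_{\pv V}(X)$ and joint continuity of multiplication.

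\emph{Irreducibility.} Given $u,v\in\ov{\iota(L)}$, I would use that $\iota(L)\times\iota(L)$ is dense in $\ov{\iota(L)}\times\ov{\iota(L)}$ to fix a net $(a_\alpha,b_\alpha)_\alpha$ in $L\times L$ with $\iota(a_\alpha)\to u$ and $\iota(b_\alpha)\to v$. By irreducibility of $L$ there is, for each $\alpha$, a word $c_\alpha\in X^+$ with $a_\alpha c_\alpha b_\alpha\in L$. Passing to a subnet along which $\iota(c_\alpha)$ converges to some $w$ (here compactness of $\wh F_{\pv V}(X)$ is essential — it is what lets one extract the ``connecting factor'' $w$ in the limit), joint continuity of multiplication gives $\iota(a_\alpha)\,\iota(c_\alpha)\,\iota(b_\alpha)=\iota(a_\alpha c_\alpha b_\alpha)\to uwv$, and since each $\iota(a_\alpha c_\alpha b_\alpha)\in\iota(L)$ we conclude $uwv\in\ov{\iota(L)}$. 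Hence $\ov{\iota(L)}$ is irreducible.

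\emph{Factoriality.} Now assume $L$ is factorial and $S_L\in\pv V$, and let $\lambda\colon X^+\to S_L$ be the syntactic morphism, so that $L=\lambda\inv(\lambda(L))$. Since $S_L$ is finite and lies in $\pv V$, the universal property of $\wh F_{\pv V}(X)$ yields a continuous homomorphism $\wh\lambda\colon\wh F_{\pv V}(X)\to S_L$ with $\wh\lambda\circ\iota=\lambda$. The key claim is that $\ov{\iota(L)}=\wh\lambda\inv(P)$ where $P=\lambda(L)$: the inclusion $\subseteq$ holds because $\wh\lambda(\iota(L))=P$ and $\wh\lambda\inv(P)$ is closed ($P$ is clopen in the discrete semigroup $S_L$); and for $\supseteq$, given $s\in\wh\lambda\inv(P)$ one picks a net $(w_\alpha)$ in $X^+$ with $\iota(w_\alpha)\to s$, whence $\lambda(w_\alpha)=\wh\lambda(\iota(w_\alpha))\to\wh\lambda(s)\in P$, so $\lambda(w_\alpha)\in P$ eventually by discreteness of $S_L$, i.e.\ $w_\alpha\in L$ eventually, giving $s\in\ov{\iota(L)}$.

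Granting this identification, it remains only to check that $P$ is factorial in $S_L$, for then Lemma~\ref{pullback}(1) applied to $\wh\lambda$ shows at once that $\wh\lambda\inv(P)=\ov{\iota(L)}$ is factorial. To see $P$ factorial: take $p=\lambda(\ell)\in P$ with $\ell\in L$, and suppose $p=qrs$ with $q,s\in S_L^1$, $r\in S_L$; since $\lambda$ is onto, write $q=\lambda(a)$, $r=\lambda(c)$, $s=\lambda(b)$ with $c\in X^+$ and $a,b\in X^*$ (taking $a$ or $b$ empty when $q$ or $s$ is $1$), so that $acb\in X^+$ has $c$ as a factor and $\lambda(acb)=p=\lambda(\ell)$; hence $acb\in\lambda\inv(\lambda(L))=L$, and factoriality of $L$ gives $c\in L$, i.e.\ $r=\lambda(c)\in P$. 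I expect the only delicate point to be this last block, namely getting the clean description $\ov{\iota(L)}=\wh\lambda\inv(P)$ and verifying $P$ is factorial — the hypothesis $S_L\in\pv V$ is used precisely to produce the continuous extension $\wh\lambda$, after which the finiteness of $S_L$ does the rest.
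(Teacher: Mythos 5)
Your argument for irreducibility is essentially the paper's: approximate $u,v$ by images of words of $L$, connect each pair of approximants inside $L$, and extract a convergent (sub)net of connectors by compactness. For factoriality, however, you take a genuinely different route. The paper works directly in $\wh F_{\pv V}(X)$: it quotes the facts that $\ov{\iota(L)}$ is open and $\iota\inv\bigl(\ov{\iota(L)}\bigr)=L$ when $S_L\in\pv V$, approximates an arbitrary factorization $u=xwy$ by words $x_n,w_n,y_n$, uses openness to get $x_nw_ny_n\in L$ eventually, and then factoriality of $L$ to get $w_n\in L$, hence $w\in\ov{\iota(L)}$. You instead prove from scratch the identification $\ov{\iota(L)}=\wh\lambda\inv(\lambda(L))$ (the same underlying recognizability fact, which the paper cites to Almeida's book), verify by a finite computation that $\lambda(L)$ is factorial in the finite syntactic semigroup, and pull back through Lemma~\ref{pullback}(1). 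Both arguments are correct and rest on the same hypothesis $S_L\in\pv V$; yours has the advantage of reducing the topological content to a single clopen-preimage identification and reusing a lemma already established in the paper, at the cost of an extra (easy) verification that surjective morphisms push factoriality forward onto a fully recognized image. One small point worth making explicit in your version: when you pass to the terms of the net with $w_\alpha\in L$, you should note that this eventual subnet still converges to $s$, which is what puts $s$ in $\ov{\iota(L)}$.
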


\begin{proof}
Suppose $u,v\in \ov{\iota(L)}$. Then
$u=\lim \iota(u_n)$
and
$v=\lim \iota(v_n)$
for some sequences
$\{u_n\}$, $\{v_n\}$ of elements of $L$.
For each $n$, there is $w_n\in X^+$ such that
$u_nw_nv_n\in L$.
It follows
that $uwv\in \ov{\iota(L)}$ for some accumulation point $w$
of $\{\iota(w_n)\}$.
This completes
the proof of irreducibility.

Suppose $\pv V$ contains the syntactic semigroup of
$L$.
Then $\ov{\iota(L)}$ is open
and $\iota^{-1}\Bigl(\ov{\iota(L)}\Bigr)=L$~\cite{Almeida:book}.
Let $u\in\ov{\iota(L)}$.
Take a factorization $u=xwy$,
where $x$ and $y$ are allowed to be $1$.
Then $x=\lim\iota(x_n)$,
$w=\lim\iota(w_n)$,
and $y=\lim\iota(y_n)$,
for some sequences
$\{x_n\}$, $\{w_n\}$, $\{y_n\}$ of elements of $X^*$ (where $\iota(1)=1$).
Since $\ov{\iota(L)}$ is open,
for all sufficiently large $n$, we have
$x_nw_ny_n\in \iota^{-1}\Bigl(\ov{\iota(L)}\Bigr)=L$.
Since $L$ is factorial, this implies that
$w\in \ov{\iota(L)}$,
thus $\ov{\iota(L)}$ is factorial.
\end{proof}

If $\pv V$ contains the variety of finite nilpotent semigroups, then
$\iota$ is an embedding of $X^+$ in $\wh F_{\pv V}(X)$, and
thus we consider $\iota$ to be an inclusion map.
There are sofic shifts $\mathscr X$ such that
$\ov{L(\mathscr X)}$ is not factorial in
$\wh{F}_{\pv {LSl}}(X)$~\cite[Proposition 3.3]{Costa:2007},
where $\pv {LSl}$ is the variety of finite semigroups whose
local submonoids are semilattices.

From Proposition~\ref{JclassesasFI} and
the first part of Lemma~\ref{inherited-properties}
we immediately deduce the following
result of Almeida announced in~\cite{Almeida:2005bshort} and proved
in~\cite{Almeida:2005USU}.

\begin{Prop}\label{Jclassofsoficshift}
Let $\mathscr X\subseteq X^{\mathbb Z}$ be an irreducible shift.
Let $\pv V$ be a variety of finite semigroups such
that $\ov{\iota(L(\mathscr X))}$ is factorial,
where $\iota\colon X^+\to \wh F_{\pv V}(X)$ is the canonical morphism.
Then there is a unique minimal $\J$-class, denoted $J(\mathscr X)$, of
$\wh F_{\pv V}(X)$ intersecting $\ov{\iota(L(\mathscr X))}$.
Moreover, $J(\mathscr X)$ is regular and is $\J$-below each element of $\ov{\iota(L(\mathscr X))}$.
\end{Prop}

It was proved in~\cite{Almeida&Costa:2009} that if
$\pv V$ is a variety of finite semigroups such that
$\pv V=\pv A\malce \pv V$
then $\ov{L}$ is a factorial subset of $\wh F_{\pv V}(X)$
whenever $L$ is a factorial subset of $X^+$.
Hence, thanks also to
Lemma~\ref{inherited-properties},
one can define the
$\J$-class $J(\mathscr X)$
in Proposition~\ref{Jclassofsoficshift}
whenever at least one of the
following conditions holds:
\begin{enumerate}
\item $\pv V=\pv A\malce \pv V$;
\item \pv V contains the syntactic semigroup of $L(\mathscr X)$.
\end{enumerate}

The maximal subgroup of $J(\mathscr X)$ is called the \emph{profinite group associated to the irreducible shift $\mathscr X$};  it is known to be a conjugacy invariant of $\mathscr X$ if $\pv V=\pv V\ast \pv D$, where $\pv D$
  is the variety of finite semigroups whose idempotents are 
  right zeroes, and $\pv V$ contains the two-element finite semilattice~\cite{Alfredo2}.

It is easy to see 
that if $\pv H$ is a variety of finite groups
then the variety $\ov{\pv H}$ of finite semigroups whose subgroups are in
$\pv H$ is an example of a variety of finite semigroups
satisfying the equations
$\pv V=\pv A\malce \pv V$
and $\pv V=\pv V\ast \pv D$~\cite{Eilenberg,qtheor}.

\subsection{Sofic shifts and generalized group mapping semigroups}
In this section we establish a connection between irreducible sofic shifts and generalized group mappings semigroups with aperiodic $0$-minimal ideals, which we term $\mathsf{AGGM}$-semigroups to be consistent with the notation of~\cite{qtheor}.

\begin{Def}[$\mathsf{AGGM}$-semigroup]
A finite semigroup $S$ is called \emph{generalized group mapping} if it acts faithfully on the left and right of its minimal ideal or it has a $0$-minimal ideal on which it acts faithfully on both the left and right.  The ideal in question, called the \emph{distinguished ideal}, is unique and regular~\cite[Proposition 4.6.22]{qtheor}.  If the ideal is aperiodic, then we call $S$ an \emph{$\mathsf{AGGM}$-semigroup}.
\end{Def}

Generalized group mapping semigroups were introduced by Krohn and Rhodes in their work on the complexity of finite semigroups; see~\cite{KRannals,folleyR,folleyT,Arbib,qtheor}.

Notice that an $\mathsf{AGGM}$-semigroup is either trivial or contains a $0$ and the distinguished ideal is $0$-minimal. If $S$ is an $\mathsf{AGGM}$-semigroup, then by the \emph{distinguished $\J$-class} of $S$, we mean the unique $\J$-class if $S$ is trivial and otherwise we mean $I\setminus\{0\}$ where $I$ is the distinguished ideal.
It follows from~\cite[Proposition 4.6.37]{qtheor} that $S$ is an $\mathsf{AGGM}$-semigroup if and only if there is a regular $\J$-class $J$ of $S$ with the following property: for all $s,t\in S$, one has $s=t$ if and only if, for all $x,y\in J$, \[xsy\in J\iff xty\in J.\]  Moreover, in this case $J$ is the distinguished $\J$-class.  From now on, if $S$ is an $X$-generated profinite semigroup, then $[w]_S$ will denote the image of a word $w\in X^+$ in $S$.

\begin{Thm}\label{AGGM}
Let $S$ be a finite $X$-generated semigroup.  Then $S$ is the syntactic semigroup of $L(\mathscr X)$ for an irreducible sofic shift  $\mathscr X\subseteq X^{\mathbb Z}$ if and only if it is an $\mathsf{AGGM}$-semigroup.
\end{Thm}
\begin{proof}
Suppose first that $\mathscr X$ is a sofic shift and $S$ is the syntactic semigroup of $L(\mathscr X)$.  If $\mathscr X=X^{\mathbb Z}$, then $S$ is trivial and there is nothing to prove.  So assume $\mathscr X$ is a proper shift.  Then $S$ has a $0$ and $L(\mathscr X)$ is the full inverse image of $S\setminus \{0\}$ since $S$ is the syntactic semigroup of a coideal in $X^+$.  Let $J$ be a minimal non-zero $\J$-class of $S$.  We first claim that $J$ is regular.  Indeed, if $[u]_S\in J$, then by irreducibility we can find $w\in X^+$ so that $uwu\in L(\mathscr X)$. Then $0\neq [uwu]_S\leq_{\J} [u]_S$ and so by minimality $[uwu]_S\in J$.  Hence $[wu]_S\in J$ and so $J^2\neq 0$.  Thus $J$ is regular.  Now suppose that $[s]_S,[t]_S$ are such that, for all $x,y\in J$, one has $x[s]_Sy\in J\iff x[t]_Sy\in J$.  Let $u,v\in X^+$ and assume that $usv\in L(\mathscr X)$.  Let $[z]\in J$.  By irreducibility, we can find $w_1,w_2\in X^+$ so that $zw_1usvw_2z\in L(\mathscr X)$.  Set $x=[zw_1
 u]_S$ and $y=[vw_2z]_S$.  Then $x,y\in J$ and $x[s]_Sy\in J$.  Thus $x[t]_Sy\in J$ and hence $zw_1utvw_2z\in L(\mathscr X)$.  It follows $utv\in L(\mathscr X)$.  A symmetric argument shows that $utv\in L(\mathscr X)$ implies $usv\in L(\mathscr X)$.  Thus $[s]_S=[t]_S$.  This establishes that $S$ is an $\mathsf{AGGM}$-semigroup.

Conversely, assume $S$ is an $\mathsf{AGGM}$-semigroup.  Let $J$ be
the distinguished $\J$-class of $S$ and let $\pi\colon X^+\to S$ be
the canonical surjection.  Then $\mathrm{Fact}(J)=S\setminus \{0\}$ is a non-empty, factorial, irreducible subset of $S$ by Proposition~\ref{JclassesasFI}.  Thus $\pi\inv(S\setminus \{0\})$ is a non-empty, factorial, irreducible rational subset of $X^+$ (by Lemma~\ref{pullback}) and hence of the form $L(\mathscr X)$ for an irreducible sofic shift $\mathscr X\subseteq X^{\ZZ}$. It remains to show that $S$ is the syntactic semigroup of $\pi\inv(S\setminus \{0\})$.  To prove this, it suffices to verify that, given $m,n\in S$ such that $rms\neq 0\iff rns\neq 0$ for all $r,s\in S^1$, one has $m=n$.  By the remark before the theorem, it suffices to  prove that, for all $x,y\in J$, we have $xmy\in J\iff xny\in J$.  But $xmy\in J$ if and only if $xmy\neq 0$, if and only if $xny\neq 0$, if and only if $xny\in J$.  This completes the proof of the theorem.
\end{proof}

\begin{Rmk}
The Fischer cover~\cite{Fischercover,Beauquier} of an irreducible sofic shift $\mathscr X$ is nothing more than the Sch\"utzenberger graph associated to the faithful right action of the syntactic semigroup of $L(\mathscr X)$ on an $\R$-class of its distinguished $\J$-class.
\end{Rmk}

The $\J$-classes corresponding to irreducible sofic shifts admit the following topological characterization.

\begin{Prop}\label{classifyJshift}
Let $\pv V$ be a variety of finite semigroups.  Then a regular $\J$-class $J$ of $\wh{F}_{\pv V}(X)$ (for a finite set $X$) is of the form $J(\mathscr X)$ for an irreducible sofic shift $\mathscr X$ with $L(\mathscr X)$ a $\pv V$-recognizable set if and only if $\mathrm{Fact}(J)$ is clopen.
\end{Prop}
\begin{proof}
 Let $\iota\colon X^+\to \wh{F}_{\pv V}(X)$
denote the canonical morphism. 
Suppose first that  $J=J(\mathscr X)$ for an irreducible sofic shift $\mathscr X$ with $L(\mathscr X)$ a $\pv V$-recognizable set.  Let $\lambda\colon \wh{F}_{\pv V}(X)\to S_{\mathscr X}$ be the continuous homomorphism induced by the syntactic morphism for $L(\mathscr X)$. Then $\mathrm{Fact}(J) = \ov{\iota(L(\mathscr X))}=\lambda^{-1}(S_{\mathscr X}\setminus \{0\})$ and hence is clopen.

Conversely, suppose $\mathrm{Fact}(J)$ is clopen.
Proposition~\ref{JclassesasFI} shows that $\mathrm{Fact}(J)$ is
factorial and irreducible. Then $L=\iota\inv (\mathrm{Fact}(J))$ is a
$\pv V$-recognizable language and
$\ov{\iota(L)}=\mathrm{Fact}(J)$~\cite{Almeida:book}; in particular,
$L\neq \emptyset$.  Moreover, it is factorial by Lemma~\ref{pullback}.
It remains to prove that $L$ is irreducible.
It will then follow that $L=L(\mathscr X)$ and $J=J(\mathscr X)$ for
an appropriate irreducible sofic shift $\mathscr X$.
Suppose $u,v\in L$ and $\iota(u)w\iota(v)\in \mathrm{Fact}(J)$ with $w\in \wh{F}_{\pv V}(X)$.  Then $w=\lim \iota(w_n)$ for some sequence $\{w_n\}\subseteq X^+$.  Then $\iota(uw_nv)\to \iota(u)w\iota(v)\in \mathrm{Fact}(J)$ and hence, as $\mathrm{Fact}(J)$ is open, for $n$ large enough $uw_nv\in \iota\inv(\mathrm{Fact}(J))=L$.  This completes the proof of irreducibility.
\end{proof}

An important lemma that we shall exploit frequently is the following.
\begin{Lemma}\label{mapsonto}
Let $\mathscr X\subseteq X^{\mathbb Z}$ be an irreducible sofic shift whose syntactic semigroup is contained in a variety of finite semigroups $\pv V$ and suppose we have a commutative diagram of continuous surjective morphisms
\[\xymatrix{\wh F_{\pv V}(X)\ar[r]^{\p}\ar[rd]_{\lambda} & S\ar[d]^{\psi}\\ & S_{\mathscr X}}\] where $\lambda\colon \wh F_{\pv V}(X)\to S_{\mathscr X}$ is the continuous extension of the syntactic morphism of $L(\mathscr X)$.  Then:
\begin{enumerate}
\item $\p(J(\mathscr X))$ is a regular $\J$-class $J$ of $S$;
\item $J$ is the unique minimal $\J$-class of $S$ with $\psi(J)$ contained in the distinguished $\J$-class of $S_{\mathscr X}$;
\item $J(\mathscr X)$ is the unique minimal $\J$-class of $\wh F_{\pv V}(X)$ mapping into $J$;
\item The image under $\p$ of each maximal subgroup of $J(\mathscr X)$ is a maximal subgroup of $J$.
\end{enumerate}
\end{Lemma}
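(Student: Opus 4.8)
The statement is essentially a corollary of Lemma~\ref{liftclasses} applied to $\psi\colon S\to S_{\mathscr X}$, combined with the characterization of $J(\mathscr X)$ as the apex of $\ov{\iota(L(\mathscr X))}$. The plan is to push $\mathrm{Fact}(J(\mathscr X))=\ov{\iota(L(\mathscr X))}$ forward through $\p$ and recognize the image as $\mathrm{Fact}$ of a regular $\J$-class, then identify that $\J$-class on both sides using minimality.

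First I would verify (1). By Proposition~\ref{Jclassofsoficshift} (available because the syntactic semigroup of $L(\mathscr X)$ lies in $\pv V$), $J(\mathscr X)$ is a regular $\J$-class and $\mathrm{Fact}(J(\mathscr X))=\ov{\iota(L(\mathscr X))}$; moreover, since $\mathrm{Fact}(J(\mathscr X))=\lambda^{-1}(S_{\mathscr X}\setminus\{0\})$ as in the proof of Proposition~\ref{classifyJshift}, this set is clopen. Now $\p(\mathrm{Fact}(J(\mathscr X)))$ is a non-empty subset of $S$; I claim it equals $\psi^{-1}(S_{\mathscr X}\setminus\{0\})$. The inclusion $\subseteq$ is immediate from $\psi\circ\p=\lambda$. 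For $\supseteq$, if $\psi(s)\neq 0$ pick any $\p$-preimage-free argument: actually use surjectivity of $\p$ to write $s=\p(t)$; then $\lambda(t)=\psi(s)\neq 0$, so $t\in\lambda^{-1}(S_{\mathscr X}\setminus\{0\})=\mathrm{Fact}(J(\mathscr X))$, giving $s\in\p(\mathrm{Fact}(J(\mathscr X)))$. Thus $\p(\mathrm{Fact}(J(\mathscr X)))=\psi^{-1}(S_{\mathscr X}\setminus\{0\})$, which is clopen in $S$, and it is factorial and irreducible by Lemma~\ref{pullback} (applied to $\psi$ and the factorial irreducible set $S_{\mathscr X}\setminus\{0\}=\mathrm{Fact}(J_{\text{dist}})$). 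By Proposition~\ref{JclassesasFI} this set has a unique apex $J$, a regular $\J$-class, with $\mathrm{Fact}(J)=\p(\mathrm{Fact}(J(\mathscr X)))$.

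Next, (2) and (3). Since $\mathrm{Fact}(J)=\psi^{-1}(\mathrm{Fact}(J_{\text{dist}}))=\psi^{-1}(\mathrm{Fact}(J_{\text{dist}}))$, Lemma~\ref{liftclasses}(1) applied to $\psi$ says there is a unique minimal $\J$-class of $S$ mapping into $J_{\text{dist}}$, namely the apex of $\psi^{-1}(\mathrm{Fact}(J_{\text{dist}}))=\mathrm{Fact}(J)$, which is exactly $J$; this gives (2). For (3), apply Lemma~\ref{liftclasses}(1) to $\p\colon\wh F_{\pv V}(X)\to S$ and the regular $\J$-class $J$: the unique minimal $\J$-class of $\wh F_{\pv V}(X)$ mapping into $J$ is the apex of $\p^{-1}(\mathrm{Fact}(J))$. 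But $\p^{-1}(\mathrm{Fact}(J))=\p^{-1}\p(\mathrm{Fact}(J(\mathscr X)))\supseteq\mathrm{Fact}(J(\mathscr X))$, and since $\mathrm{Fact}(J(\mathscr X))=\lambda^{-1}(S_{\mathscr X}\setminus\{0\})=(\psi\p)^{-1}(S_{\mathscr X}\setminus\{0\})=\p^{-1}(\psi^{-1}(S_{\mathscr X}\setminus\{0\}))=\p^{-1}(\mathrm{Fact}(J))$, the two sets are actually equal, so the apex is $J(\mathscr X)$ by Proposition~\ref{JclassesasFI}; this is (3). Finally, (4) is immediate from Lemma~\ref{liftclasses}(3) applied to $\p$ and $J$, using that $\p(J(\mathscr X))=J$ (part of Lemma~\ref{liftclasses}(2), or directly: $\p(J(\mathscr X))$ is minimal in $\mathrm{Fact}(J)$ since $J(\mathscr X)$ is minimal in $\mathrm{Fact}(J(\mathscr X))$ and $\p$ is onto, hence equals the apex $J$).

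The only point requiring care—and the main obstacle if any—is the bookkeeping identity $\mathrm{Fact}(J(\mathscr X))=\p^{-1}(\mathrm{Fact}(J))$, i.e.\ that pulling $\mathrm{Fact}(J(\mathscr X))$ forward and then back recovers it exactly rather than enlarging it. This works precisely because $\mathrm{Fact}(J(\mathscr X))$ is a full inverse image under $\lambda$ (it equals $\lambda^{-1}(S_{\mathscr X}\setminus\{0\})$), and $\lambda$ factors through $\p$; so everything reduces to the clean observation that $\lambda^{-1}(A)=\p^{-1}(\psi^{-1}(A))$. Once this is in hand, all four assertions drop out of Lemma~\ref{liftclasses} and Proposition~\ref{JclassesasFI} with no further computation.
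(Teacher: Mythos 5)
Your proposal is correct and follows essentially the same route as the paper: apply Lemma~\ref{liftclasses} to $\psi$ to produce $J$, and then use the factorization $\lambda=\psi\circ\p$ (equivalently, $\p^{-1}(\mathrm{Fact}(J))=\lambda^{-1}(\mathrm{Fact}(J_0))=\mathrm{Fact}(J(\mathscr X))$) to identify $J(\mathscr X)$ as the unique minimal $\J$-class mapping into $J$, after which (1), (3) and (4) all follow from Lemma~\ref{liftclasses} applied to $\p$. The paper phrases the key step element-wise ($u\leq_{\J}J(\mathscr X)$ and $\p(u)\in J$ force $u\in\ov{L(\mathscr X)}$, hence $u\in J(\mathscr X)$) where you phrase it via equality of $\mathrm{Fact}$-sets, but these are the same argument.
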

\begin{proof}
Let $J_0$ be the distinguished $\J$-class of $S_{\mathscr X}$ and suppose that $J$ is the unique minimal $\J$-class of $T$ with $\psi(J)\subseteq J_0$ guaranteed by Lemma~\ref{liftclasses}.  Then $J$ is regular.  To complete the proof, it suffices by Lemma~\ref{liftclasses} to verify that $J(\mathscr X)$ is minimal among $\J$-classes of $\wh F_{\pv V}(X)$ mapping under $\p$ into $J$.  Suppose that $u\leq_{\J} J(\mathscr X)$ with $\p(u)\in J$.  Then $\psi(\p(u))\in J_0$ and hence $u\in \ov{L(\mathscr X)}$.  It follows that $u\in J(\mathscr X)$ by definition of $J(\mathscr X)$.
\end{proof}

The following lemma is an immediate consequence of
Lemma~\ref{reducedtonicecase}. The hypothesis
$\pv V=\pv V\ast \pv D$ is there to guarantee
that $J(\mathscr Y)$ is well defined, since
for two conjugate shifts
$\mathscr X$ and $\mathscr Y$,
the syntactic semigroup
of $L(\mathscr X)$ belongs to
$\pv V$ if and only if the syntactic semigroup
of $L(\mathscr Y)$ does~\cite{Alfredo1}.

\begin{Lemma}\label{realreduce}
Let $\mathscr X\subseteq X^{\mathbb Z}$ be an irreducible sofic shift
whose syntactic semigroup is contained in a variety of finite
semigroups $\pv V$ with
$\pv V=\pv V\ast \pv D$ and $\pv V$ containing all finite semilattices.  There there is a conjugate irreducible sofic shift $L(\mathscr Y)$ over an alphabet $Y$, an idempotent $e\in J(\mathscr Y)$ and a word $z\in Y^+$ so that $e=z^{\omega}e$ and $\mathsf{alph}(z)\subsetneq \mathsf{alph}(\mathscr Y)$.
\end{Lemma}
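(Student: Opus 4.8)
The plan is to read Lemma~\ref{reducedtonicecase} through the structure theory of regular $\J$-classes developed above (Proposition~\ref{JclassesasFI} and Proposition~\ref{Jclassofsoficshift}); the statement is essentially a repackaging of that lemma. I take for granted that $\mathscr X$ is not minimal --- otherwise no such $z$ can exist, since every factor of a periodic shift uses the full alphabet, and this is the only case to which Lemma~\ref{reducedtonicecase} applies anyway. First I would apply Lemma~\ref{reducedtonicecase} to obtain a conjugate sofic shift $\mathscr Y\subseteq Y^{\ZZ}$ and a non-empty word $z\in L(\mathscr Y)$ with $z^+\subseteq L(\mathscr Y)$ and $\mathsf{alph}(z)\subsetneq\mathsf{alph}(\mathscr Y)$. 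Since $\mathscr Y$ is conjugate to $\mathscr X$ it is again irreducible and sofic, and by the conjugacy invariance of membership of the syntactic semigroup in $\pv V$ recalled immediately before the lemma (this is where $\pv V=\pv V\ast\pv D$ is used), the syntactic semigroup of $L(\mathscr Y)$ lies in $\pv V$. Hence $\ov{\iota(L(\mathscr Y))}$ is irreducible by the first part of Lemma~\ref{inherited-properties} (as $L(\mathscr Y)$ is irreducible) and factorial by its second part; in particular $J(\mathscr Y)$ is defined, and by Proposition~\ref{JclassesasFI} the closed FI-subset $\ov{\iota(L(\mathscr Y))}$ coincides with $\mathrm{Fact}(J(\mathscr Y))$, every element of which is $\geq_{\J}J(\mathscr Y)$.

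Next, let $z^{\omega}$ denote the idempotent power of the canonical image of $z$ in $\wh F_{\pv V}(Y)$. Because $z^{n}\in z^+\subseteq L(\mathscr Y)$ for every $n\geq 1$, the idempotent $z^{\omega}$ is a limit of canonical images of words of $L(\mathscr Y)$ and hence lies in $\ov{\iota(L(\mathscr Y))}=\mathrm{Fact}(J(\mathscr Y))$. Now fix any $k_0\in J(\mathscr Y)$. Applying irreducibility of $\ov{\iota(L(\mathscr Y))}$ to the pair $z^{\omega},k_0$ yields $w\in\wh F_{\pv V}(Y)$ with $k:=z^{\omega}wk_0\in\ov{\iota(L(\mathscr Y))}$. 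On the one hand $k\leq_{\J}k_0$ gives $k\leq_{\J}J(\mathscr Y)$; on the other $k\in\ov{\iota(L(\mathscr Y))}$ gives $k\geq_{\J}J(\mathscr Y)$; hence $k\in J(\mathscr Y)$. Note that $z^{\omega}k=z^{\omega}z^{\omega}wk_0=k$. Since $J(\mathscr Y)$ is regular I may write $k=kk'k$ for some $k'$ and set $e=kk'$; then $e^{2}=e$ and $e\R k$, so $e\in J(\mathscr Y)$, while $z^{\omega}e=z^{\omega}kk'=kk'=e$. Together with $\mathsf{alph}(z)\subsetneq\mathsf{alph}(\mathscr Y)$ from the first step, this is exactly the assertion of the lemma.

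I expect no real obstacle. The one step that needs a moment of care is the passage, just above, from $k=z^{\omega}wk_0$ being merely $\J$-above $J(\mathscr Y)$ to its actually belonging to $J(\mathscr Y)$: this is forced precisely by the irreducibility of the closed FI-subset $\mathrm{Fact}(J(\mathscr Y))$ together with the minimality of $J(\mathscr Y)$ inside it, which is the substance of Proposition~\ref{JclassesasFI}. It is also worth noting that under the stated hypotheses the canonical map $\iota\colon Y^{+}\to\wh F_{\pv V}(Y)$ need not be injective, but this plays no role, since $z^{\omega}$ and the containment $z^{\omega}\in\ov{\iota(L(\mathscr Y))}$ are meaningful regardless.
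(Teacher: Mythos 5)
Your proof is correct and follows essentially the same route as the paper: invoke Lemma~\ref{reducedtonicecase}, observe $z^{\omega}\in\ov{z^+}\subseteq\ov{L(\mathscr Y)}=\mathrm{Fact}(J(\mathscr Y))$, manufacture an element of $J(\mathscr Y)$ left-fixed by $z^{\omega}$ (the paper does this via the factor characterization $xz^{\omega}y\in J(\mathscr Y)$ and local identities $f,f'$, you via irreducibility applied to the pair $z^{\omega},k_0$ --- an immaterial variation), and then pass to an idempotent in its $\R$-class. Your remark that non-minimality must be assumed (as in the surrounding context) is also consistent with how the paper actually uses the lemma.
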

\begin{proof}
  Let $\mathscr Y$ and $z$ be as in Lemma~\ref{reducedtonicecase}.
  Then $z^{\omega}\in \ov{z^+}\subseteq \ov{L(\mathscr Y)}$ and so by
  minimality of $J(\mathscr Y)$, we can find $x,y\in \wh{F}_{\pv
    V}(Y)^1$ so that $xz^{\omega}y\in J(\mathscr Y)$. Since $J(\mathscr Y)$ is regular, there are idempotents $f,f'\in J(\mathscr Y)$ so that $fxz^{\omega}yf'= xz^{\omega}y$.  Consequently,
  $z^{\omega}yf'\in J(\mathscr Y)$.  By regularity of $J(\mathscr Y)$, we can then find an idempotent $e\in J(\mathscr Y)$ with $e\R z^{\omega}yf'$.  Then $z^{\omega}e=e$ as required.
\end{proof}

The set of idempotents in a profinite semigroup is closed and hence a profinite space.  It turns out that the idempotents in $\J$-classes corresponding to irreducible sofic shifts are dense in relatively free profinite semigroups.

\begin{Prop}\label{soficidempotentsaredense}
Let $\pv V$ be a variety of finite semigroups and $X$ a finite set.  Let $A$ be the set of idempotents of $\wh{F}_{\pv V}(X)$ that belong to a $\J$-class of the form $J(\mathscr X)$ for some irreducible sofic shift $\mathscr X\subseteq X^{\ZZ}$ with $L(\mathscr X)$ a $\pv V$-recognizable set.  Then $A$ is dense in $E(\wh{F}_{\pv V}(X))$,
\end{Prop}
\begin{proof}
Let $e\in E(\wh{F}_{\pv V}(X))$.  Then a basic neighborhood of $e$ is
of the form $\pi\inv(\pi(e))$ where $\pi\colon \wh{F}_{\pv V}(X)\to V$
is a continuous homomorphism to an element $V$ of $\pv V$.  Let $J$ be the $\J$-class of $\pi(e)$ and choose a minimal $\J$-class $J'$ of $\wh{F}_{\pv V}(X)$ with $\pi(J')\subseteq J$ as per Lemma~\ref{liftclasses}. In particular, $J'$ is regular and $\mathrm{Fact}(J')=\pi\inv(\mathrm{Fact}(J))$, and hence is clopen.  Proposition~\ref{classifyJshift} then implies that $J'=J(\mathscr X)$ for an irreducible sofic shift $\mathscr X$ with $L(\mathscr X)$ a $\pv V$-recognizable set.  By Lemma~\ref{liftclasses}, there is an idempotent $f\in J'=J(\mathscr X)$ with $\pi(f)=\pi(e)$.  Thus $f\in A\cap \pi\inv(\pi(e))$, establishing that $A$ is dense.
\end{proof}

\section{The Sch\"utzenberger representation and wreath products}\label{minimal}
In this section we collect a number of standard facts concerning
finite semigroups, which can be
found, for instance, in~\cite{CP,Arbib,qtheor}.

\subsubsection*{The Sch\"utzenberger representation}
Let $J$ be a regular $\J$-class of a finite semigroup $S$.  Fix an
$\R$-class $R$ of $J$.  Then $S$ acts on the right of $R$
by partial functions by simply restricting the  action of $S$ on
the right of itself.  More precisely, for $s\in S$ and $x\in R$,
define
\begin{equation*}
x\cdot s = \begin{cases} xs & xs\in R\\ \text{undefined}
  &\text{else.}\end{cases}
\end{equation*}

The resulting faithful partial transformation semigroup does not depend on $R$ up to isomorphism~\cite[Chapter 4, Section 6]{qtheor} and we denote it by $(R,\mathsf{RM}_J(S))$.  We use $\rho_J\colon S\to \RM_J(S)$ for
the associated quotient map.  The map $\rho_J$ is called
the (right) \emph{Sch\"utzenberger representation} of $S$ on $J$.  If $G$ is a maximal subgroup contained in $R$, then the restriction of the action of $G$ to $G\subseteq R$ is the regular representation and hence faithful.  Since $\RM_J(S)$ depends only on $J$ and not $R$, it follows that $\rho_J$ is injective on each maximal subgroup of $J$.  The results
of~\cite[Chapter 4, Section 6]{qtheor} imply that $\rho_J(J)$ is a regular $\J$-class of $\RM_J(S)$ and the Sch\"utzenberger
representation of $\rho_J(S)$ on it is faithful.

Retaining the above notation, denote
by $L(J)$ the set of $\eL$-classes of $S$ in $J$.
There is an action of $S$ by partial
transformations on $L(J)$ given by
\begin{equation}\label{RLMdef}
L_xs = \begin{cases} L_{xs} & xs\in J\\ \text{undefined}
  &\text{else.}\end{cases}
\end{equation}
The resulting faithful right partial transformation semigroup is
denoted by $(L(J),\RLM_J(S))$ and the quotient map by $\mu_J\colon S\to
\RLM_J(S)$.  See~\cite[Chapter 4, Section 6]{qtheor} for details.

\subsubsection*{Wreath products}
Let us briefly recall the
wreath product of partial transformation
semigroups~\cite{Eilenberg,qtheor}. In this paper, by a \emph{partial transformation semigroup}, we mean a pair $(B,S)$ where $S$ is a semigroup acting faithfully by partial transformations on the \emph{right} of $B$. If the maps in $S$ are total, then we use the terminology \emph{transformation semigroup}.  In the case that $S$ is a monoid (group) and the identity acts as the identity, then we say it is a \emph{partial transformation monoid (group)}.  If $B$ is a set, then $\ov B$ will denote the semigroup of all constant maps on $B$.

It will be convenient to use in this paper the formulation of wreath products in terms of row monomial matrices~\cite[Chapter 5]{qtheor} or~\cite{Arbib}.  If $S$ is a semigroup, then $S^0$ is the semigroup obtained by functorially adjoining a multiplicative zero $0$ (so a zero is added to $S$ even if it already had one).  Let $S$ be a non-empty semigroup and $B$ a set.  Then $RM_B(S)$ consists of all $B\times B$-matrices row monomial matrices over $S^0$  equipped with usual matrix multiplication. Recall that a matrix is \emph{row monomial} if each row contains at most one non-zero entry. The construction $RM_B(-)$ is functorial.   Suppose that $(B,T)$ is a partial transformation semigroup.  The full partial transformation monoid is easily seen to be isomorphic to $RM_B(\{1\})$~\cite[Chapter 5]{qtheor}.  Thus we may view $T$ as a subsemigroup of $RM_B(\{1\})$.  Let $\pi\colon RM_B(S)\to RM_B(\{1\})$ be the projection.  Then we define the \emph{wreath product} $S\wr (B,T) = \pi
 \inv (T)$. The projection $RM_B(S)\to RM_B(\{1\})$ restricts to a surjective morphism $S\wr (B,T)\to T$. Notice that $(-)\wr (B,T)$ is functorial and preserves surjective morphisms.  If $S$ is a group and $(B,T)$ is a transformation group, then $S\wr (B,T)$ is a group.

Let $J$ be a regular $\J$-class of a finite semigroup $S$ with maximal subgroup $G$.  Denote by $J^0$ the semigroup obtained by adding a multiplicative zero $0$ to $J$ and putting \[x\cdot y=\begin{cases} xy & xy\in J\\ 0 &\text{else.}\end{cases}\] for $x,y\in J$.  Then $J^0$ is $0$-simple and hence isomorphic to a Rees matrix semigroup $\MM^0(G,A,B,C)$ where
$C\colon B\times A\to G^0$ is the sandwich matrix~\cite{CP,Arbib,qtheor}.
Fix $a_0\in A$ and $b_0\in B$.  Then without loss of generality we
may assume that each non-zero entry of row $b_0$ and of column $a_0$ of $C$ is the
identity of $G$~\cite{Arbib,qtheor}.  We identify $G$ with the maximal
subgroup $a_0\times G\times b_0$.

Recall that $B$ can be identified with the set of $\eL$-classes of $J$~\cite{qtheor}.
Notice that each element of $J$
acts on $B$ as a rank $1$ partial map (cf.~\eqref{RLMdef}), where the \emph{rank} of a partial transformation is the size of its image.  Moreover, $J$ is transitive on $B$.

There is a well-known embedding $\RM_J(S)\hookrightarrow G\wr
(B,\mathsf{RLM}_J(S))$ such that an element
$s=(a,g,b)\in J$ is sent to the matrix $M(s)$ all of whose non-zero entries are in column $b$ and with $M(s)_{b'b}=
C_{b'a}g$~\cite[Proposition 4.6.42]{qtheor}. In particular, if
$s=(a_0,g,b_0)$ is an element of our maximal subgroup, then every non-zero entry of column $b_0$ of $M(s)$ is $g$ and in particular $M(s)_{b_0b_0}=g$, establishing yet again that the Sch\"utzenberger representation
is faithful on the maximal subgroup $G$.

The following well-known lemma elucidates the structure of wreath products.  See, for instance,~\cite[Theorems~9.3.10 and~9.3.15]{BerstelPerrinReutenauer}.

\begin{Lemma}\label{structureofwreath}
Let $S = G\wr (B,T)$ where $G$ is a non-trivial finite group and $T$ is a finite transitive partial transformation semigroup consisting of maps of rank at most $1$ and denote by $\pi\colon G\wr (B,T)\to T$ the wreath product projection.    Then $S$ is simple if $T$ consists of total maps and otherwise $S$ is $0$-simple. The maximal subgroup of the non-zero $\J$-class of $S$ is isomorphic to $G$.  More
precisely, if $e\neq 0$ is an idempotent such that the image of $\pi(e)$ is $\{b\}$, then
$\psi\colon G_e\to G$ given by $\psi(s)= s_{bb}$ is an isomorphism.
\end{Lemma}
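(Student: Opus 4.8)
The plan is to work directly with the row-monomial matrix description of $S = G\wr(B,T)$, which identifies $S$ with the set of those $B\times B$ row-monomial matrices over $G^0$ whose underlying $\{0,1\}$-pattern (obtained by collapsing $G$ to $1$) lies in $T\subseteq RM_B(\{1\})$. First I would treat the structural dichotomy. Since $T$ is transitive and consists of maps of rank at most $1$, each nonzero element $t\in T$ has image a singleton $\{b\}$, so the corresponding matrices in $S$ have exactly one nonzero column, namely column $b$, with arbitrary nonzero $G$-entries in the nonzero rows. If $T$ consists of total maps, then $T$ is a transitive semigroup of constant maps, i.e. $T\cong\ov B$ (the right-zero semigroup on $B$), and one checks that $S$ has no zero and that any two nonzero elements generate the same ideal, so $S$ is simple; if some map of $T$ is properly partial, then $0\in T$ and hence $0\in S$, and the same ideal computation shows $S\setminus\{0\}$ is a single $\J$-class, so $S$ is $0$-simple. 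The key computation in both cases is: given nonzero $s,s'\in S$ with nonzero columns $b$ and $b'$, transitivity of $T$ provides $t\in T$ mapping (a point in the image pattern of) $s$ appropriately so that $s u s'$ is nonzero for a suitable $u\in S$ lifting $t$; this is the standard "$0$-simple" argument and I expect it to go through routinely.

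Next I would pin down the maximal subgroup. Fix an idempotent $e\neq 0$ with $\pi(e)$ having image $\{b\}$; idempotency of $e$ forces $e_{bb}$ to be an idempotent of $G^0$, hence (as $e\neq 0$) $e_{bb}=1_G$, and more generally for any nonzero row $b'$ one gets the relation $e_{b'b} = e_{b'b}\cdot e_{bb}$, automatically satisfied, while multiplying $e\cdot e = e$ in the $(b',b)$ entry gives no further constraint beyond $b$ being in the ``support'' of $\pi(e)$. The claim is that $\psi\colon G_e\to G$, $\psi(s)=s_{bb}$, is an isomorphism of groups. That $\psi$ is a homomorphism follows from the matrix multiplication formula: if $s,s'\in G_eSe$... more precisely $s,s'\in\HH_e$, both $s$ and $s'$ have their unique nonzero column equal to $b$ (since $s\R e$ forces the column pattern, and $s\eL e$ forces... here I use that $\HH_e$ consists of matrices with nonzero column $b$ whose $b$-th entry on the nonzero rows is determined up to the $G$-ambiguity), and then $(ss')_{bb} = \sum_{b''} s_{bb''}s'_{b''b} = s_{bb}s'_{bb}$ because the only nonzero entry in row $b$ of $s$ is in column $b$. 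Injectivity: if $s_{bb}=1_G$ and $s\in\HH_e$, I would argue that all the other entries of $s$ are forced to equal the corresponding entries of $e$, because $s$ and $e$ have the same $\R$-class (same column pattern) and the same $\eL$-class, and then using $se=s$, $es=s$ and the already-fixed entry $s_{bb}=1$ to propagate equality down the nonzero column; hence $s=e$. Surjectivity: given $g\in G$, the matrix obtained from $e$ by replacing its $(b',b)$ entries $e_{b'b}$ by $e_{b'b}g$ is again in $S$ (same $T$-pattern), lies in $\HH_e$, and has $bb$-entry $g$.

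The main obstacle I anticipate is the bookkeeping in the injectivity step — making precise why membership in $\HH_e$ together with $s_{bb}=1_G$ pins down every other entry of $s$. The clean way to handle this is to first observe that $\HH_e$, being a group, embeds in $\RM_J(S)$ via the Schützenberger representation on its own $\R$-class, and to note that an element of $J$ is determined by its $\R$-class (a column $b$), its $\eL$-class, and a single group coordinate — this is exactly the Rees-matrix coordinatization recalled earlier in the section. Under that coordinatization the entry $s_{bb}$ is precisely the group coordinate (since row $b_0 = b$ and column $a_0$ have been normalized to have identity entries in $C$), so the map $\psi$ is literally the projection $(a_0,g,b_0)\mapsto g$, which is a bijective homomorphism onto $G$ by construction. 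I would therefore phrase the proof by reducing to this normalized Rees-matrix picture rather than grinding through raw matrix entries, citing~\cite[Chapter 5]{qtheor} and~\cite[Proposition 4.6.42]{qtheor} for the identification.
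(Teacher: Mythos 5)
The maximal-subgroup half of your argument is essentially the paper's proof: the unique nonzero column of every element of $G_e$ is pinned to $b$ because the image of the underlying partial map is an $\eL$-class invariant, the identity $(ss')_{bb}=s_{bb}s'_{bb}$ gives the homomorphism property, the relation $s=es$ (so $s_{b'b}=e_{b'b}s_{bb}$) gives injectivity, and surjectivity comes from perturbing the column of $e$ by $g$ (the paper perturbs only the $(b,b)$-entry and sandwiches with $e$; your variant, multiplying the whole column on the right by $g$, also works, though you should note that membership of the perturbed matrix in $\HH_e$ uses both the $\eL$-lemma below and stability). Your parenthetical ``$s\R e$ forces the column pattern'' has the Green's relations backwards, but the conclusion you draw is the right one.

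Two points do need repair. First, in the ($0$-)simplicity step your ``key computation'' --- that $sus'\neq 0$ for a suitable $u$ --- does not show that $s$ and $s'$ generate the same ideal; it only shows that the two principal ideals meet outside $\{0\}$. What is needed is that $s'$ itself lies in $S^1sS^1$, and the obstruction is matching the $G$-entries, not just the $T$-patterns. The paper isolates exactly this as the claim that $\pi(x)=\pi(y)$ implies $x\eL y$, proved by exhibiting an explicit left multiplier $z$ with $z_{i,if}=y_{i,it}x_{if,it}\inv$, where $f$ is an idempotent of $T$ with the same domain as $\pi(x)$ (this uses regularity of $T$); combined with the ($0$-)simplicity of $T$, lifting a factorization of $\pi(s')$ through $\pi(s)$ then lands in the $\eL$-class of $s'$, which finishes the argument. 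Your sketch needs this ingredient made explicit. Second, the ``cleaner'' reformulation in your final paragraph is circular: the Rees coordinatization $J^0\cong \MM^0(G',A,B',C)$ is taken over the maximal subgroup $G'$ of $J$, and identifying $G'$ with $G$ is precisely what the lemma asserts, so you cannot appeal to the normalized sandwich matrix to read off that $s_{bb}$ ``is the group coordinate'' without first carrying out the direct matrix computation you already gave.
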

\begin{proof}
First we claim that $T$ is simple if it consists of total maps and otherwise is $0$-simple.  If $T$ consists of total maps, it is a right zero semigroup and hence trivially simple.  Otherwise, suppose $0\neq t\in T$ is not total and that $b_0$ is not in the domain of $t$.  Let $\{b\}$ be the image of $t$.  By transitivity, there is an element $t'\in T$ with $bt'=b_0$.  Then $tt't=0$ so $0\in T$.  To see that $T$ is $0$-simple, suppose that $t,t'\in T$ are non-zero.  Let $b_t,b_{t'}$ be the unique elements in the image of $t, t'$, respectively.  Let $b$ be an element of the domain of $t'$.    By transitivity, we can find $u,v\in T$ with $b_tu=b$ and $b_{t'}v=b_t$.  Then $tut'v=t$ and so $t\in Tt'T$.  A symmetric argument shows that $t'\in TtT$.  Thus $T$ is $0$-simple.

Next we verify that $\pi(x)=\pi(y)$ implies $x\eL y$.  This will imply the simplicity or $0$-simplicity of $S$ (depending on which case we are in).  Let $f\in T$ be an idempotent that is $\R$-equivalent to $\pi(x)=t=\pi(y)$ ($T$ is regular).  Then $f$ has the same domain as $t$.  Let $z\in S$ be given by putting $z_{i,if} = y_{i,it}x_{if,it}\inv$ if $if$ (equivalently, $it$) is defined and $0$ otherwise.  Then one immediately verifies that $zx=y$.  A symmetric argument establishes that $x\eL y$.

Now suppose that $e$ is an idempotent
of $S$ such that $\pi(e)$ has image $\{b\}$. We must show $\psi$ defined as above is an isomorphism.  First note that every element $s$ of $S$ that is $\eL$-equivalent to $e$ satisfies $B\pi(s) =\{b\}$.  Thus each element of $G_e$ has all its non-zero entries in column $b$.  It now follows that if $s,s'\in G_e$, then $(ss')_{bb}=s_{bb}s'_{bb}$, that is, $\psi$ is a homomorphism.  In particular, we have $1=\psi(e)=e_{bb}$.

To see $\psi$ is injective, note that $s\in G_e$ implies
$s = es$.  Since $s,e$ have all their non-zero entries in column $b$, it follows that $s_{b'b} = e_{b'b}s_{bb}$ and so $s$ is determined by $\psi(s)=s_{bb}$.  Thus $\psi$ is injective. Finally to verify
$\psi$ is onto, assume $g\in G$.  Let $s$ be the element of $S$ obtained from $e$ by changing $e_{bb}$ to $g$ and leaving all other entries the same.   Then $\pi(ese) = \pi(e^3)=\pi(e)$ and so $ese\eL e$ by the above.  Hence $ese\HH e$.  Since $e_{bb}=1$, clearly $(ese)_{bb} = e_{bb}s_{bb}e_{bb} =g$.  Thus $\pi$ is onto.
\end{proof}

If $(A,S)$ and $(B,T)$ are partial transformation semigroups, then one has that $(A\times B,S\wr (B,T))$ is a partial transformation semigroup, which we denote $(A,S)\wr (B,T)$.  Here if $M\in S\wr (B,T)$, then $(a,b)M$ is defined if and only if $M_{bb'}\neq 0$ for some $b'\in B$ and $aM_{bb'}$ is defined.  The result is then $(aM_{bb'},b')$.  The wreath product of partial transformation semigroups is known to be associative~\cite{Eilenberg}.  We can view an iterated wreath product $S\wr (A,T)\wr
(B,U)$ as $|B|\times |B|$ block row monomial matrices
where the blocks are $|A|\times |A|$ row monomial matrices over $S$.
The term \emph{block entry} shall refer to a non-zero matrix from $S\wr (A,T)$
while the term \emph{entry} shall always mean an element of the
semigroup $S^0$. In general matrices, and in particular block entries,
shall be denoted by capital letters for the remainder of the paper.

\section{Statement of the main result and a reduction}
In this section, we state our main result and reduce its proof to a technical construction that will be presented in the next section.  Recall that a subset $Y$ of a profinite
group $G$ \emph{converges to the identity} if each neighborhood of the
identity contains all but finitely many elements of $Y$.  A pro-\pv
H group $F$ is \emph{free pro-\pv H on a subset $Y$ converging to the
identity} if given any map $\tau\colon Y\to H$ with $H$ pro-\pv H and
$\tau(Y)$ converging to the identity, there is a unique continuous extension of
$\tau$ to $F$. The cardinality of $Y$ is called the \emph{rank} of
$F$. Any free pro-\pv H group on a profinite space is free on a subset
converging to the identity~\cite{RZbook}.

The following theorem was proved in~\cite[Theorem 7.5]{AlmeidaVolkov2} for the case where $\ov{\pv H}$ consists of all finite semigroups. We provide here the proof of the general case.  Recall that $C\subseteq X^+$ is called a \emph{code} if $C^+$ is a free semigroup on $C$.  A code is said to have \emph{synchronization delay} at most $d$, if for all $(c,c')\in C^d\times C^d$ and all $x,y\in X^*$, one has $xcc'y\in C^+$ if and only if $xc,c'y\in C^+$.

\begin{Lemma}\label{synchdelay}
Let $u\in X^+$ be a primitive word.  Then, for any $m>0$ and any $x,y\in X^*$, one has $xu^my\in u^+$ if and only if $x,y\in u^*$.
\end{Lemma}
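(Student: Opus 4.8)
The plan is to prove the trivial implication directly and the substantive one by a positional argument in combinatorics on words. If $x=u^{i}$ and $y=u^{j}$ with $i,j\ge 0$, then $xu^{m}y=u^{i+m+j}$ and $i+m+j\ge m\ge 1$, so $xu^{m}y\in u^{+}$; this requires nothing further.

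For the converse, suppose $xu^{m}y=u^{k}$ with $k\ge 1$. Comparing lengths gives $|x|+|y|=(k-m)|u|$, so $m\le k$ and $|x|+|y|$ is a multiple of $|u|$. Write $|x|=q|u|+r$ with $0\le r<|u|$. Since $x$ is a prefix of $u^{k}$, in the case $r=0$ we get $x=u^{q}\in u^{*}$; then $u^{m}y$ is the suffix of $u^{k}$ of length $(k-q)|u|$, hence equals $u^{k-q}$ (with $k-q-m\ge 0$ by the length count), forcing $y=u^{k-q-m}\in u^{*}$. So it suffices to rule out the possibility $r\ge 1$.

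Assume $r\ge 1$ and factor $u=u_{1}u_{2}$ with $|u_{1}|=r$; then $u_{1}$ and $u_{2}$ are both non-empty. Because $m\ge 1$, the factor of $xu^{m}y$ of length $|u|$ beginning at position $|x|$ is precisely the first copy of $u$ inside the block $u^{m}$, hence equals $u$. Reading the same factor inside $u^{k}=(u_{1}u_{2})^{k}$: since $|x|=q|u|+r$ and the window of length $|u|$ starting there fits inside $u^{k}$ (as $|x|+|u|\le |x|+m|u|\le |x|+m|u|+|y|=k|u|$), that factor is the cyclic conjugate $u_{2}u_{1}$. Therefore $u_{1}u_{2}=u=u_{2}u_{1}$, so the non-empty words $u_{1}$ and $u_{2}$ commute. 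By the classical fact that two commuting words are powers of a common word~\cite{BerstelPerrinReutenauer}, $u$ is a power of some $t$ with $1\le |t|\le |u_{1}|<|u|$, contradicting the primitivity of $u$. Hence $r=0$, completing the proof.

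The argument is elementary bookkeeping on positions; the only external input is the standard Lyndon--Sch\"utzenberger fact that commuting words share a common root. The only step needing mild care is verifying that the length-$|u|$ window at position $|x|$ lies inside $u^{k}$, and this is exactly where the hypothesis $m>0$ enters. I do not foresee any real obstacle.
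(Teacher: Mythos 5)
Your proposal is correct and follows essentially the same route as the paper's proof: both arguments show that a nonzero residue of $|x|$ modulo $|u|$ would force $u$ to equal a nontrivial cyclic conjugate of itself (two non-empty commuting pieces $u_1u_2=u=u_2u_1$), and then invoke the Lyndon--Sch\"utzenberger common-root fact to contradict primitivity. The only difference is bookkeeping --- the paper peels residues $x',y'$ off both ends and manipulates the resulting equation $x'u^my'=u^{m+1}$, whereas you compare a single length-$|u|$ window at position $|x|$ --- and both versions are sound.
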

\begin{proof}
Suppose $xu^my\in u^+$.  Then we may write $x=u^kx'$ and $y=y'u^\ell$ so that $|x'|,|y'|<u$.  If we can show that $x'=1=y'$, we are done.  Suppose at least one of $x'$ and $y'$ are non-trivial.  Then since $x'u^my'\in u^+$, by length considerations we must have $x'u^my'=u^{m+1}$ and $|x'|+|y'|=|u|$.  But then $x'$ is a prefix of $u$ and $y'$ is a suffix of $u$ and so $x'y'=u$ by length considerations.  But then $x'u^my'=x'y'u^{m-1}x'y'$ and so $u^m=y'u^{m-1}x'$.  Thus $y'$ is a prefix and $x'$ is a suffix of $u$.  Length considerations then yield $y'x'=u=x'y'$.  But then $x',y'$ are powers of a word $w$ and hence $u$ is a proper power, contradicting primitivity.  Thus $x'=1=y'$, as was required. 
\end{proof}

\begin{Thm}\label{Theorem0}
Suppose that $X$ is a finite set and let $\mathscr X\subseteq
X^{\mathbb Z}$ be a periodic shift.  Let
$\pv H$ be a non-trivial variety of finite groups.
Then the maximal subgroup $G(\mathscr X)$ of \mbox{$J(\mathscr
  X)\subseteq \wh F_{\ov{\pv H}}(X)$} is a free pro-$\pv H$ group
of rank $1$.
\end{Thm}

\begin{proof}
Let $u$ be a primitive word such that $\mathscr X$ is the orbit
of $\ldots uuu.uuu\ldots$. 
We claim that the maximal subgroup $K$ of
\mbox{$J(\mathscr X)$} containing $u^\omega$ is generated by
$u^{\omega+1}$. This fact is a special case of
\cite[Theorem 7.5]{AlmeidaVolkov2}, but it can proved in an easier way. 
An elementary result of Restivo~\cite{purecode} shows that if $C\subseteq X^+$ is a code such that $C^+$ is pure, i.e., closed under extraction of roots, then the syntactic semigroup of $C^+$ is aperiodic (see also~\cite[Chapter~7, Exercise~8]{Lallement}). Clearly $u^+$ is closed under extraction of roots (by primitivity of $u$) and so its syntactic monoid $S$ is aperiodic and hence belongs to $\ov {\pv H}$.  If $\eta\colon \wh F_{\ov{\pv H}}(X)\to S$ is the canonical extension of the syntactic morphism, then $\eta(K)=\eta(u^{\omega})=\eta(u^+)$, where the first equality holds by aperiodicity, whereas the second is immediate from Lemma~\ref{synchdelay}.  We conclude $K\subseteq \eta\inv\eta(u^+) = \ov{u^+}$.  Since $\ov{\langle u^{\omega +1}\rangle}$ is the unique maximal subgroup of $\ov{u^+}$, this establishes the claim.

Since $K$ is procyclic, it is free pro-$\pv H$ if and only if every
cyclic group from $\pv H$ is an image of it.  Let $n$ be an integer such that $\pv H$ contains a cyclic group of order $n$.  Since $(u,u)$ is the unique pair in $\{u\}\times \{u\}$, Lemma~\ref{synchdelay} with $m=2$ immediately yields that the code $\{u\}$ has synchronizing delay at most $1$.  Therefore, if $T$ is the syntactic semigroup of $\{u^n\}^+$, then each maximal subgroup of $T$ is in the variety of finite groups generated by $\mathbb Z_n$, by~\cite[Chapter~7, Corollary~2.14]{Lallement} and hence $T\in \ov{\pv H}$ (this can also be deduced from $S$ being aperiodic and the result of Weil on subgroups of the syntactic semigroup of a composed code~\cite{pascalcode}).   Clearly, a necessary condition for $[u^{n+r}]_T=[u^{n+k}]_T$ is that $r\equiv k\bmod n$.  Consequently, $\langle [u]_T^{\omega+1}\rangle$ must in fact be a cyclic group of order $n$. 
Putting together what we have just shown, we see that $K=\ov{\langle u^{\omega+1}\rangle}$ maps onto the cyclic group of order $n$ whenever it belongs to $\pv H$. We conclude that $K$ is a free pro-$\pv H$
group of rank $1$. 
\end{proof}

  A fact that we shall use in the following theorem
  is that if $\pv V$ is a variety of
finite semigroups containing all finite semilattices such that
$\pv V=\pv V\ast \pv D$  and $\mathscr X,\mathscr
Y$ are conjugate sofic shifts, then the syntactic semigroup of
$L(\mathscr X)$ belongs to $\pv V$ if and only if the syntactic semigroup
of $L(\mathscr Y)$ does~\cite{Alfredo1}.

\begin{Thm}\label{Theorem1}
Suppose that $X$ is a finite set and let $\mathscr X\subseteq
X^{\mathbb Z}$ be an irreducible sofic shift.
Suppose that $\pv H$ is a variety of finite groups closed under
extension such that the syntactic semigroup
of $L(\mathscr X)$ belongs to $\ov{\pv H}$
and $\mathbb Z/p\mathbb Z\in \pv H$ for infinitely many primes $p$.
Then the maximal subgroup $G(\mathscr X)$ of \mbox{$J(\mathscr X)\subseteq \wh F_{\ov{\pv H}}(X)$} is a free pro-$\pv H$ group.  If $\mathscr X$ is minimal, then $G(\mathscr X)$ is procyclic; otherwise it is free pro-$\pv H$ of countable rank.
\end{Thm}
\begin{proof}
  By Theorem~\ref{Theorem0}, we may suppose that $\mathscr X$ is not
  minimal.  Because $\ov{\pv H} = \ov{\pv H}\ast \pv D$, the isomorphism class of the maximal subgroup of
  $J(\mathscr X)$ depends on $\mathscr X$ only up to conjugacy
  by~\cite{Alfredo2}.

  Lemma~\ref{realreduce}
  and the observation made in the paragraph before the theorem, allow
  us to assume that there exist an idempotent $e\in J(\mathscr X)$ and
  a word $z\in X^+$ so that $e=z^{\omega}e$ and
  $\mathsf{alph}(z)\subsetneq \mathsf{alph}(\mathscr X)$.  By possibly
  shrinking or enlarging the alphabet, we may assume without loss of
  generality that
  $X=\{x_1,\ldots,x_{n+1}\}$
  where $\mathsf{alph}(\mathscr X)=\{x_1,\ldots,x_n\}$ and that we have an idempotent $e\in J(\mathscr X)$ and a word $z\in \{x_1,\ldots, x_{n-1}\}^+$ so that $z^{\omega}e=e$ and $x_1\in \mathsf{alph}(z)$. Doing this lets us avoid treating the full shift as a special case.  Instead, we may assume that the syntactic semigroup $S_{\mathscr X}$ of $L(\mathscr X)$ has a zero element $0$.  Let $G_e$ be the maximal
subgroup at $e$. Our goal is to show that $G_e$ is free pro-\pv H on a
countable set of generators converging to the identity (that is,
free of countable rank).

It is well known $\ovFP H X$ is
metrizable~\cite{Almeida:book,qtheor}, and hence so is $G_e$.
Thus the identity $e$ of $G_e$ has a countable basis of
neighborhoods.  We shall use a well-known criterion, going back to
Iwasawa~\cite{Iwasawa}, to establish that $G_e$ is free pro-\pv H of
countable rank. An
\emph{embedding problem} for $G_e$ is a diagram
\begin{equation}\label{embeddingproblem}
\xymatrix{& G_e\ar@{->>}[d]^{\p}\\
H\ar@{->>}[r]^{\alpha}&K}
\end{equation}
with $H\in \pv H$ and $\p,\alpha$ epimorphisms ($\p$ continuous).

A \emph{solution} to the embedding problem \eqref{embeddingproblem}
is a continuous \textbf{epimorphism} $\til\p\colon G_e\to H$ making the diagram
\[\xymatrix{& G_e\ar@{-->>}[ld]_{\til\p}\ar@{->>}[d]^{\p}\\
H\ar@{->>}[r]^{\alpha}&K}\]
commute. (The terminology ``embedding problem'' comes from Galois theory.)
According to~\cite[Corollary 3.5.10]{RZbook} to prove
$G_e$ is free pro-\pv H of countable rank it suffices to show that
every embedding problem \eqref{embeddingproblem} for $G_e$ has a
solution. We proceed via a series of reductions on the types of
embedding problems we need to consider.  The initial reductions are
nearly identical to those in~\cite{projective,minimalideal}.

So let us suppose that we have an embedding problem for $G_e$ as per
\eqref{embeddingproblem}.  The reader is referred to~\cite[Chapter 3, Section 1]{qtheor} for basic properties of profinite semigroups and projective
limits; see also~\cite{RZbook} for the analogous results in the
context of profinite groups. Let $\lambda\colon \ovFP H X\to S_{\mathscr X}$ be the continuous extension of the syntactic morphism of $L(\mathscr X)$; note that $\lambda(x_{n+1})=0$.   Let $\{S_i\}_{i\in D}$ be the inverse
quotient system of all finite continuous images of $\ovFP H X$ such that $\lambda$ factors through the projection $\pi_i\colon\ovFP H X\to S_i$.  Then
$\ovFP H X = \ilim_{i\in D} S_i$.    Since $G_e$ is a closed subgroup of
$\ovFP H X$, it follows from basic properties of profinite spaces that
$G_e = \ilim_{i\in D}\pi_i(G_e)$ (see~\cite[Corollary 1.1.8]{RZbook}).  Since $\p$ is an onto continuous map to a finite group it follows that $\p$ factors through $\pi_i|_{G_i}$ for some
$i\in D$ (i.e.\ $\ker \pi_i|_{G_e}\subseteq \ker
\p$)~\cite[Lemma 1.1.16]{RZbook}.  Setting $S'=S_i$ and $\p'=\pi_i$, we
conclude there exists a continuous onto homomorphism $\p'\colon \ovFP
H X\twoheadrightarrow S'$ with $S'$ a finite semigroup in $\ov{\pv H}$
such that $\ker
\p'|_{G_e}\subseteq \ker \p$ and $\ker \p'\subseteq \ker \lambda$.

Set $K' = \p'(G_e)$ and let
$\rho\colon K'\twoheadrightarrow K$ be the canonical projection. Defining
$H'$ to be the pullback of $\alpha$ and $\rho$, that is, \[H' =
\{(h,k')\in H\times K'\mid \alpha(h)=\rho(k')\},\] yields a
commutative diagram
\begin{equation*}
\xymatrix{         &                 &     G_e\ar@{->>}@/_/[ld]_{\p'}\ar@{->>}@/^1pc/[ldd]^{\p}\\
H'\ar@{->>}[r]^{\alpha'}\ar@{->>}[d]_{\rho^*} & K'\ar@{->>}[d]^{\rho}&   \\
H\ar@{->>}[r]^{\alpha}  & K            &}
\end{equation*}
where $\rho^*$ is the projection to $H$ and $\alpha'$ is the projection to $K'$.  It is easily verified that
all the arrows in the diagram are epimorphisms.
So to solve our
original embedding problem, it suffices to solve the embedding
problem
\begin{equation}\label{betterdiagram}
\xymatrix{& G_e\ar@{->>}[d]^{\p'}\\
 H'\ar@{->>}[r]^{\alpha'}&K'}
\end{equation}
as the composition of a solution to \eqref{betterdiagram} with $\rho^*$ yields a solution to \eqref{embeddingproblem}.
In other words, reverting back to our original notation, we may
assume  in the embedding problem \eqref{embeddingproblem} that the
map $\p$ is the restriction of a continuous onto homomorphism
$\p\colon \ovFP H X\twoheadrightarrow S$ with $S\in \ov{\pv H}$ and $\ker \p\subseteq \ker \lambda$.

By Lemma~\ref{mapsonto}, $J=\p(J(\mathscr X))$ is a regular $\J$-class of $S$ and
the group $K=\p(G_e)$ is
a maximal subgroup of $J$.  By Section~\ref{minimal}, the right
Sch\"utz\-en\-ber\-ger representation $\rho_J\colon S\to \mathsf{RM}_J(S)$ of
$S$ on $J$ is faithful when restricted to $K$.  Moreover, if $\psi\colon S\to S_{\mathscr X}$ is the canonical projection, then $\ker \rho_J\subseteq \ker \psi$ by Lemma~\ref{mapsonto},~\cite[Proposition 4.6.37]{qtheor} and~\cite[Equation (4.8)]{qtheor} since $S_{\mathscr X}$ is an $\mathsf{AGGM}$-semigroup with distinguished $\J$-class $J_0$ and $J$ is minimal with $\psi(J)\subseteq J_0$.
Possibly replacing
$S$ by its image under the Sch\"utzenberger representation, we may then
assume that the right Sch\"utzenberger representation of $S$ on $J$
is faithful (recall that from the results
of~\cite[Chapter 4, Section 6]{qtheor} $\rho_J(J)$ is a regular $\J$-class of $\RM_J(S)$ and the Sch\"utzenberger
representation of $\rho_J(S)$ on it is faithful). Therefore, we may view $S$ as embedded in the wreath
product $K\wr (B,\mathsf{RLM}_J(S))$.  Moreover, $J$ is then the unique minimal non-zero $\J$-class of $S$~\cite[Proposition 4.6.29]{qtheor}.  Consequently, Lemma~\ref{mapsonto} implies the following lemma.

\begin{Lemma}\label{whenitdoesntfall}
  Let $u\in \ovFP H X$.
Then $u\in \ov{L(\mathscr X)}$ if and only if $\p(u)\neq 0$. In particular, $\p(x_{n+1})=0$.
\end{Lemma}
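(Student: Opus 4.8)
The plan is to exploit the factorization $\lambda=\psi\circ\p$ through the syntactic semigroup (available since $\ker\p\subseteq\ker\lambda$, with $\psi\colon S\to S_{\mathscr X}$ the canonical projection already introduced above) together with the fact, recalled just before the lemma, that $J$ is the unique minimal non-zero $\J$-class of $S$. Since the syntactic semigroup $S_{\mathscr X}$ lies in $\ov{\pv H}$, we have $\ov{L(\mathscr X)}=\ov{\iota(L(\mathscr X))}=\lambda^{-1}(S_{\mathscr X}\setminus\{0\})$ exactly as in the proof of Proposition~\ref{classifyJshift}; hence $u\in\ov{L(\mathscr X)}$ if and only if $\lambda(u)\neq 0$, i.e.\ if and only if $\psi(\p(u))\neq 0$. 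So the whole statement reduces to the claim that $\psi$ sends no non-zero element of $S$ to $0$; that is, $\psi^{-1}(0)=\{0\}$.

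To prove $\psi^{-1}(0)=\{0\}$, let $J_0$ be the distinguished $\J$-class of the $\mathsf{AGGM}$-semigroup $S_{\mathscr X}$ (Theorem~\ref{AGGM}), so $0\notin J_0$, and recall from Lemma~\ref{mapsonto} that $J=\p(J(\mathscr X))$ satisfies $\psi(J)\subseteq J_0$. Let $s\in S$ with $s\neq 0$. Working in the finite poset $S/{\J}$, pick a $\leq_{\J}$-minimal element among the non-empty set of non-zero $\J$-classes that are $\leq_{\J} J_s$; such a class is a minimal non-zero $\J$-class of $S$, hence equals $J$ since $J$ is the unique such class~\cite[Proposition 4.6.29]{qtheor}. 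Therefore $J\leq_{\J} J_s$, so $x=asb$ for some $x\in J$ and $a,b\in S^1$. Applying $\psi$ gives $\psi(x)=\psi(a)\psi(s)\psi(b)$, and since $\psi(x)\in\psi(J)\subseteq J_0$ we get $\psi(x)\neq 0$; as $0$ is absorbing, this forces $\psi(s)\neq 0$. Hence $\psi^{-1}(0)=\{0\}$, which establishes the equivalence.

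For the final assertion, $x_{n+1}\notin\mathsf{alph}(\mathscr X)$ gives $x_{n+1}\notin L(\mathscr X)=\iota^{-1}(\ov{L(\mathscr X)})$, so $x_{n+1}\notin\ov{L(\mathscr X)}$ and the equivalence yields $\p(x_{n+1})=0$ (equivalently, $\psi(\p(x_{n+1}))=\lambda(x_{n+1})=0$ together with $\psi^{-1}(0)=\{0\}$). The only slightly delicate point — the main obstacle — is the passage from ``$J$ is the unique minimal non-zero $\J$-class of $S$'' to ``every non-zero element of $S$ is $\J$-above $J$'', which is the finite-poset minimality argument sketched above; once that is in place, the rest is formal.
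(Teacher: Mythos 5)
Your proof is correct and follows essentially the same route as the paper, which dispatches this lemma in one line as a consequence of Lemma~\ref{mapsonto} together with the observation that $J$ is the unique minimal non-zero $\J$-class of $S$; you have simply written out the details (the factorization $\lambda=\psi\circ\p$, the identity $\ov{L(\mathscr X)}=\lambda^{-1}(S_{\mathscr X}\setminus\{0\})$, and the finite-poset argument showing every non-zero element of $S$ lies $\J$-above $J$, whence $\psi^{-1}(0)=\{0\}$). No gaps.
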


The existence of a solution to \eqref{embeddingproblem}
is then a consequence of the following technical lemma whose proof we defer to Section~\ref{s:proofofmaintech}.

\begin{Lemma}\label{maintechnical}
Let $\p\colon \ovFP H X\twoheadrightarrow S$ be a continuous surjective
morphism with $S$ finite and $\ker \p\subseteq \ker \lambda$ such that $\p(G_e) = K$ and the
Sch\"utzenberger representation of $S$ on the $\J$-class $J=\p(J(\mathscr X))$ is
faithful. In particular, $J$ is regular and is the unique minimal non-zero $\J$-class of $S$.  Suppose that $\alpha\colon H\twoheadrightarrow K$ is an epimorphism of finite groups.
 Then there is an $X$-generated finite semigroup $S'\in \ov{\pv H}$
 such that if $\eta\colon \ovFP H X\to S'$ is the continuous projection,
 then:
\begin{enumerate}
\item there is an isomorphism $\theta\colon G_{\eta(e)}\to H$ where
  $G_{\eta(e)}$ is the maximal subgroup of $S'$ at $\eta(e)$;
\item $\p$ factors through $\eta$ as $\rho\eta$ where
$\rho\colon S'\twoheadrightarrow S$ satisfies $\rho\theta\inv = \alpha$.
\end{enumerate}
\end{Lemma}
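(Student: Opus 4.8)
The plan is to build $S'$ as an $X$-generated subsemigroup of a wreath product sitting on top of $H$. Using the faithful Sch\"utzenberger representation, regard $S$ as a subsemigroup of $K\wr(B,\RLM_J(S))$, as in the paragraph preceding the lemma, and let $\widetilde\rho\colon H\wr(B,\RLM_J(S))\twoheadrightarrow K\wr(B,\RLM_J(S))$ be the surjection induced entrywise by $\alpha$ (functoriality of $-\wr(B,\RLM_J(S))$). For each letter $x_i$ choose a row-monomial matrix $\eta(x_i)$ over $H^0$ lifting $\varphi(x_i)$ through $\widetilde\rho$ (each nonzero entry $g\in K$ replaced by a chosen element of $\alpha^{-1}(g)$, and $0$ by $0$); the precise choice of these lifts is where the real work lies, and it may be necessary first to enlarge the construction by an auxiliary aperiodic ``reset'' wreath factor, to gain the room needed in the final step. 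Put $S'=\langle\eta(x_1),\dots,\eta(x_{n+1})\rangle$, let $\eta\colon\ovFP H X\twoheadrightarrow S'$ be the induced continuous morphism, and set $\rho=\widetilde\rho|_{S'}\colon S'\twoheadrightarrow S$; then $\rho\eta=\varphi$ by construction, which gives the factorisation $\varphi=\rho\eta$ of part~(2).

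Membership $S'\in\ov{\pv H}$ is cheap. Every subgroup of $H\wr(B,\RLM_J(S))$ is an extension of a subgroup of a finite power of $H$ (the part on which the wreath projection is a fixed idempotent) by a subgroup of $\RLM_J(S)$; the latter lies in $\pv H$ because $\RLM_J(S)$, being a homomorphic image of $S\in\ov{\pv H}$, is in $\ov{\pv H}$; since $\pv H$ is closed under extension, the subgroup lies in $\pv H$. The same reasoning covers the reset-augmented wreath product (a reset semigroup is aperiodic), so $S'\in\ov{\pv H}$. This is the one place where extension-closure of $\pv H$ is used.

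Next, transport the $\J$-class picture. Since $\ker\eta\subseteq\ker\varphi\subseteq\ker\lambda$, the syntactic morphism $\lambda$ factors through $\eta$, so Lemma~\ref{mapsonto} applies with $S'$ playing the role of $S$: $J':=\eta(J(\mathscr X))$ is a regular $\J$-class of $S'$, it is the unique minimal $\J$-class of $S'$ mapping into the distinguished $\J$-class of $S_{\mathscr X}$, $\eta(G_e)$ is one of its maximal subgroups, and $\rho(J')=\varphi(J(\mathscr X))=J$; since $J'$ is then also the unique minimal $\J$-class of $S'$ with $\rho(J')\subseteq J$, Lemma~\ref{liftclasses}(3) shows that $\rho$ maps the $\HH$-class $G_{\eta(e)}$ of $\eta(e)$ onto the maximal subgroup $\varphi(G_e)=K$ of $J$. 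Writing $\{b_0\}$ for the image of the idempotent $\pi(\eta(e))$ of $\RLM_J(S)$ ($\pi$ the wreath projection)—this is the $\eL$-class of $\varphi(e)$, an element of $J$ acting on $B$ as a rank-$1$ partial map, and for a suitable normalisation of the Rees coordinates it is the distinguished~$b_0$—the argument of Lemma~\ref{structureofwreath} shows that $\theta\colon G_{\eta(e)}\to H$, $N\mapsto N_{b_0b_0}$, is an injective homomorphism with $\alpha\theta=\rho|_{G_{\eta(e)}}$. This last identity yields the remaining assertion $\rho\theta^{-1}=\alpha$ of part~(2) once $\theta$ is known to be bijective, and reduces part~(1) to the surjectivity of $\theta$, i.e.\ to $\ker\alpha\subseteq\theta(G_{\eta(e)})$.

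The surjectivity of $\theta$ is the main obstacle. Since $\alpha\theta$ is already onto $K$, it suffices to produce, for each member of a fixed generating set of $\ker\alpha$, a word $w\in X^+$ with $ewe\in\ov{L(\mathscr X)}$ and $\theta(\eta(ewe))$ equal to that member: by Lemma~\ref{whenitdoesntfall} (and its analogue for $\lambda$) the condition $ewe\in\ov{L(\mathscr X)}$ gives $\lambda(ewe)$ in the distinguished $\J$-class of $S_{\mathscr X}$, which together with $\eta(ewe)\leq_{\J}\eta(e)\in J'$ and the minimality in Lemma~\ref{mapsonto} forces $\eta(ewe)\in J'$, whence stability gives $\eta(ewe)\HH\eta(e)$, i.e.\ $\eta(ewe)\in G_{\eta(e)}$. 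The construction of such words is the reason for the reduction already in force: one has an idempotent $e\in J(\mathscr X)$ and a word $z$ with $z^{\omega}e=e$ and $\mathsf{alph}(z)\subsetneq\mathsf{alph}(\mathscr X)$, together with the spare letter $x_{n+1}$ with $\varphi(x_{n+1})=0$. The idea is to design the lifts of the letters of $z$—exploiting the auxiliary reset factor, steered by $x_{n+1}$ and by powers of $z$—so that the loop $z^{\omega}$ at $e$ carries a genuinely $\ker\alpha$-valued holonomy while leaving its image in $S$ and its underlying $\RLM_J(S)$-behaviour untouched, a letter of $\mathsf{alph}(\mathscr X)\setminus\mathsf{alph}(z)$ being available to keep the derived words inside $\ov{L(\mathscr X)}$. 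Making all of this precise—the explicit lifts, the verification that the resulting words realise \emph{every} element of $\ker\alpha$ in $G_{\eta(e)}$, and that none of them falls out of $J'$—is the technical heart of the proof, and is what the paper defers to Section~\ref{s:proofofmaintech}; everything else is bookkeeping with row-monomial matrices together with routine uses of Lemmas~\ref{mapsonto}, \ref{liftclasses}, \ref{structureofwreath} and~\ref{whenitdoesntfall}.
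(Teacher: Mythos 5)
Your framework is the right one and it matches the opening of the paper's own proof: embed $S$ in $K\wr(B,\RLM_J(S))$ via the faithful Sch\"utzenberger representation, lift entrywise through $\alpha$, get $S'\in\ov{\pv H}$ from extension-closure of $\pv H$, transport the $\J$-class picture with Lemma~\ref{mapsonto} and Lemma~\ref{liftclasses}, extract an injective homomorphism $\theta$ from Lemma~\ref{structureofwreath} satisfying $\alpha\theta=\rho|_{G_{\eta(e)}}$, and reduce everything to $\ker\alpha\subseteq\theta(G_{\eta(e)})$. But that last containment is the entire content of the lemma, and you do not prove it: you call it ``the technical heart'' and defer it. The statement under review \emph{is} the deferred technical lemma, so what you have is a correct reduction plus a restatement of the problem, not a proof.

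Moreover, the hints you offer for finishing point in a direction that would not work as written. You propose an ``auxiliary aperiodic reset wreath factor,'' with the holonomy ``carried by the lifts of the letters of $z$'' and ``steered by $x_{n+1}$.'' In the paper the extra factor is not aperiodic: it is $\ov{([p],C_p)}$ for a prime $p$ with $\mathbb Z/p\mathbb Z\in\pv H$ and $p>\max\{m,|N|^b,|z|_{x_1}\}$ (this is exactly where the hypothesis on infinitely many primes enters), and an aperiodic top level cannot supply the transitive cyclic rotation of $[p]$ that the final step requires. The $N$-valued variation is not injected by the letters of $z$ at all: their lifts are just $M_{z_i}^{\sigma}$ paired with a power of the $p$-cycle (for $x_1$) or the identity. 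It is the letter $x_n\in\mathsf{alph}(\mathscr X)\setminus\mathsf{alph}(z)$ whose lift $\til x_n$ is the reset to column $1$ carrying all $\ell=|N|^b$ diagonal matrices $N_1,\dots,N_\ell$ in distinct block rows; the relation $z^{\omega}e=e$ guarantees $\eta(z)^{\omega}\eta(e)=\eta(e)$ while the powers $C^j=\eta(z)^{rmj}$ rotate the $[p]$-coordinate, so that every $\ov\alpha$-preimage of $M_e$, hence every element of $N$, occurs as $[\,C^j\eta(e)\,]_{11}$ for some $j$, giving $\theta(\eta(e)C^j\eta(e))=h$ for each $h\in N$. (One also needs the nontrivial fact, Proposition~\ref{allthere} and Corollary~\ref{minimalidealguyallthere} in the paper, that \emph{all} $\ov\alpha$-preimages of $M_e$ actually appear as block entries of $\eta(e)$.) Finally, $x_{n+1}$ steers nothing: $\til x_{n+1}=0$, and its only role is to ensure $S_{\mathscr X}$ has a zero. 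None of this is recoverable from your sketch, so the proof is incomplete at its essential step.
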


Assuming the lemma,  our
desired solution to the embedding problem \eqref{embeddingproblem} is
$\til{\p}=\theta \eta|_{G_e}\colon G_e\to H$.
Indeed, $\eta|_{G_e}\colon G_e\to G_{\eta(e)}$ is an epimorphism by
Lemma~\ref{mapsonto} (as $\ker \eta\subseteq \ker \p\subseteq \ker \lambda$) and hence $\til{\p}$ is an
epimorphism. Moreover, $\alpha\til{\p}=\rho\theta\inv
\theta\eta|_{G_e} = \p|_{G_e}$ and so $\til{\p}$ is indeed a
solution to the embedding problem \eqref{embeddingproblem}. This
completes the proof of Theorem~\ref{Theorem1}.
\end{proof}

Since the full shift is an irreducible sofic shift, an immediate corollary is the main result of~\cite{minimalideal} (although the proof of that result is simply a specialization of the current proof).

\begin{Cor}
Let $\pv H$ be a variety of finite groups closed under extension,
which contains $\mathbb Z/p\mathbb Z$ for infinitely may primes $p$.
Then the maximal subgroup of the minimal ideal of a finitely
generated (but not procyclic) free pro-${\ov {\pv H}}$ semigroup is a
free pro-\pv H group of countable rank.
\end{Cor}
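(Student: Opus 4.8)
The plan is to realize the minimal ideal of a finitely generated free pro-$\ov{\pv H}$ semigroup as the $\J$-class $J(\mathscr X)$ attached to the full shift, and then invoke Theorem~\ref{Theorem1}. So let $X$ be a finite set with $|X|\geq 2$ — this is precisely the condition ``finitely generated but not procyclic'' — and put $F=\ovFP H X$. Take $\mathscr X=X^{\mathbb Z}$, the full shift. Its language is $L(\mathscr X)=X^+$, which is rational, so $\mathscr X$ is a sofic shift; and $X^+$ is trivially an irreducible subset of $X^+$, so $\mathscr X$ is irreducible. The syntactic semigroup of $X^+$ is trivial, hence lies in $\ov{\pv H}$, so by Proposition~\ref{Jclassofsoficshift} the $\J$-class $J(\mathscr X)$ of $F$ is well defined, and $G(\mathscr X)$ is by definition its maximal subgroup.

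First I would identify $J(\mathscr X)$ with the minimal ideal of $F$. Since $\ov{\pv H}$ contains all finite nilpotent semigroups, the canonical map $\iota\colon X^+\to F$ is an embedding with dense image, so $\ov{\iota(L(\mathscr X))}=\ov{\iota(X^+)}=F$. By Proposition~\ref{Jclassofsoficshift}, $J(\mathscr X)$ is the unique minimal $\J$-class of $F$ meeting $\ov{\iota(L(\mathscr X))}=F$ and it is $\J$-below every element of $F$; that is exactly the minimal ideal of $F$. Hence $G(\mathscr X)$ is the maximal subgroup of the minimal ideal.

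Next I would check non-minimality of $\mathscr X$. By Lemma~\ref{periodic} a sofic shift is minimal if and only if it is periodic, and $X^{\mathbb Z}$ is periodic only when $|X|=1$; since $|X|\geq 2$, the shift $\mathscr X$ is not minimal. All hypotheses of Theorem~\ref{Theorem1} now hold: $\pv H$ is a variety of finite groups closed under extension containing $\mathbb Z/p\mathbb Z$ for infinitely many primes $p$, the syntactic semigroup of $L(\mathscr X)$ (being trivial) lies in $\ov{\pv H}$, and $\mathscr X$ is a non-minimal irreducible sofic shift. Theorem~\ref{Theorem1} therefore yields that $G(\mathscr X)$, i.e.\ the maximal subgroup of the minimal ideal of $F$, is a free pro-$\pv H$ group of countable rank, which is the assertion.

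I do not expect a genuine obstacle: the content is entirely that the full shift is a specific instance of a non-minimal irreducible sofic shift, so the corollary is a literal specialization of Theorem~\ref{Theorem1} (which is why we also noted in the text that the proof of~\cite{minimalideal} is recovered as a special case). The only two points deserving a sentence each are the identification of $J(X^{\mathbb Z})$ with the minimal ideal — which rests on density of $X^+$ in $F$ together with the minimality/uniqueness clause of Proposition~\ref{Jclassofsoficshift} — and the translation of ``finitely generated but not procyclic'' into $2\le|X|<\infty$, which is exactly what makes $X^{\mathbb Z}$ non-minimal.
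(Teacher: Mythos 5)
Your proposal is correct and is exactly the paper's argument: the paper derives this corollary by observing that the full shift $X^{\mathbb Z}$ is a non-minimal (for $|X|\geq 2$) irreducible sofic shift whose associated $\J$-class is the minimal ideal, and then applying Theorem~\ref{Theorem1}. The details you supply (identifying $J(X^{\mathbb Z})$ with the minimal ideal via density of $X^+$ and Proposition~\ref{Jclassofsoficshift}, and translating ``finitely generated but not procyclic'' into $2\leq |X|<\infty$) are the right ones and are left implicit in the paper.
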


It follows from our main result and Proposition~\ref{soficidempotentsaredense} that there is a dense set of idempotents in $\ovFP H X$ whose corresponding maximal subgroups are free pro-$\pv H$.

\section{The proof of Lemma~\ref{maintechnical}}\label{s:proofofmaintech}
 We retain the notation of the previous section.  In particular, recall that $X=\{x_1,\ldots,x_{n+1}\}$, there is a word $z\in \{x_1,\ldots,x_{n-1}\}^+$ so that $z^{\omega}e=e$, $x_1\in \mathsf{alph}(z)$,  $e$ is an idempotent of $J(\mathscr X)$ and $\mathsf{alph}(\mathscr X)=\{x_1,\ldots,x_n\}$.  Assume that $z=z_1\cdots z_q$ with $z_i\in \{x_1,\ldots,x_{n-1}\}$ for $i=1,\ldots, q$.

\begin{proof}[Proof of Lemma~\ref{maintechnical}]
Let $B$ be the set of $\eL$-classes of $J$.  Since we are assuming the Sch\"utzenberger representation of $S$ on $J$ is
faithful, we can view $S$ as a subsemigroup of $K\wr (B,\RLM_J(S))$, that is, as a semigroup of $b\times b$ row monomial
matrices over $K$ where $b=|B|$. Denote by $1$ the $\eL$-class of $\p(e)$.  We order the elements of $B$ with $1$ first when we write our matrices.   The discussion in
Section~\ref{minimal} shows that the embedding can be chosen so that the row monomial matrix associated to an element $k$ of the maximal
subgroup $K$ at $\p(e)$ has $k$ in every non-zero
entry of the first column and $0$ in the remaining columns.  Moreover, the $1,1$-entry of the row monomial matrix associated to $k$ is $k$.  For $x\in \ovFP H
X$, denote by $M_x$ the row monomial matrix associated to $\p(x)$.  We
shall distinguish formally between $M_x$ and $\p(x)$, although $M_x=M_y$ if and
only if $\p(x)=\p(y)$.

Let $N=\ker \alpha$ and choose a set-theoretic section $\sigma\colon K\to
H$.  Then $H=N\sigma(K)$.  For $x\in \ovFP H X$, denote by $M^{\sigma}_x$ the $b\times b$ row monomial
matrix over $H$ obtained from $M_x$ by applying $\sigma$ entry-wise.
Let $m$ be a positive integer such that
$(M^{\sigma}_{z_1}\cdots M^{\sigma}_{z_q})^m$ is idempotent in $H\wr
(B,\RLM_J(S))$. Choose a prime
$p>\max\{m,|N|^b,|z|_{x_1}\}$ so that $\mathbb Z/p\mathbb Z\in \pv H$; such a
prime exists by our assumption on \pv H.
Denote by $C_p$ the cyclic group of order $p$ generated by the
permutation $(1\ 2\cdots p)$. Our semigroup $S'$ will be a certain
subsemigroup of the iterated wreath product
\[Q=H\wr (B,\mathsf{RLM}_J(S))\wr \ov{([p],C_p)}\] where $[p]=\{1,\ldots,p\}$.  Observe that
$Q\in \ov{\pv H}$ since $\pv H$ closed under extension implies that
$\ov{\pv H}$ is closed under wreath product~\cite{Eilenberg,qtheor}. The reader is referred to~\cite{RhodesAllen} for more on taking a wreath product of a semigroup with a group with constant maps.

We begin our construction of $S'$ by defining
\begin{equation*}
\til x_1 = \begin{bmatrix} 0 & M_{x_1}^{\sigma}& 0              &\cdots & 0     \\
                            0 & 0               &M_{x_1}^{\sigma}&0      &\vdots \\
                            \vdots &    0               &  0             &\ddots & 0     \\
                            0 &    \vdots        &  0        & 0     & M_{x_1}^{\sigma}\\
                            M_{x_1}^{\sigma}& 0 &\cdots          &
                            0&0\end{bmatrix}.
\end{equation*}
In other words $\til x_1$ acts on the $[p]$-component by the cyclic
permutation $(1\ 2\ \cdots p)$ and each block entry of $\til x_1$
from $H\wr (B,\mathsf{RLM}_J(S))$ is $M_{x_1}^{\sigma}$. For $2\leq i\leq n-1$, we set
\begin{equation*}
\til x_i = \begin{bmatrix} M_{x_i}^{\sigma}& 0              &\cdots & 0     \\
                            0               &M_{x_i}^{\sigma}&0      &\vdots \\
                              \vdots               &  0             &\ddots & 0     \\
                             0               & \cdots         & 0     & M_{x_i}^{\sigma}\\
\end{bmatrix}.
\end{equation*}
So $\til x_i$ acts on the $[p]$-component as the identity map and each block entry of $\til x_i$ from $H\wr (B,\mathsf{RLM}_J(S))$ is $M_{x_i}^{\sigma}$, for $i=2,\ldots,n-1$.

To define $\til x_n$ will require some extra notation. Set
$\ell=|N|^b$; so $p>\ell$ by choice of $p$.
Let $1=N_1,N_2\ldots,N_\ell$ be the distinct elements of $N^b$. We
identify $N^b$ with the group of diagonal $b\times b$ matrices over
$N$.  In particular, $N^b$ is a subgroup of $H\wr
(B,\mathsf{RLM}_J(S)\cup \{1_B\})$.  In fact, there is a
natural onto homomorphism \[\ov{\alpha}\colon H\wr
(B,\mathsf{RLM}_J(S))\to K\wr (B,\mathsf{RLM}_J(S))\] induced by
$\alpha\colon H\to K$; the map $\ov{\alpha}$ simply applies $\alpha$
entry-wise. Moreover, it is straightforward to verify that
$\ov {\alpha}(U)=\ov {\alpha}(V)$ if and only if $U=N_jV$ some
$1\leq j\leq \ell$.  Indeed, if we denote by $u_i$ (respectively $v_i$)
the non-zero entry (if there is one) of row $i$ of $U$ (respectively $V$), then $\ov
{\alpha}(U)=\ov {\alpha}(V)$ implies $\alpha(u_i)=\alpha(v_i)$ for all
$i$ and so we can find $n_i\in N$ such that $u_i=n_iv_i$ for all $i$ (where if $i$ is a zero row of $u$ and $v$, then we may choose $n_i$ arbitrarily).
We may then take $N_j = \mathrm{diag}(n_1,n_2,\ldots,n_b)$. Dually,
$U=VN_k$, some $k$.

Next let us define a $p\times p$ block row
monomial matrix
\begin{equation*}
\til x_n = \begin{bmatrix} M_{x_n}^{\sigma}        & 0     &\cdots & 0     \\
                           N_2M_{x_n}^{\sigma}     & 0     &\cdots & 0 \\
                           \vdots                 & \vdots&\vdots & \vdots     \\
                           N_{\ell}M_{x_n}^{\sigma}&  0    &\cdots &   0   \\
                           M_{x_n}^{\sigma}        &  0    &\cdots &0 \\
                              \vdots               & \vdots&\vdots &\vdots\\
                           M_{x_n}^{\sigma}        &0      &\cdots &0   \end{bmatrix};
\end{equation*}
so $\til x_n$ has all its block entries in the first column.  The
$j^{th}$ block entry of the first column is $N_jM_{x_n}^{\sigma}$ if
$j\leq \ell$ and otherwise is $M_{x_n}^{\sigma}$. Finally, let $\til x_{n+1}=0$. Then $\til
x_1,\ldots,\til x_{n+1}\in Q$ and we have a map $X\to Q$ given by
$x_i\mapsto \til x_i$.  Extend this to a continuous morphism
$\eta\colon \ovFP H X\to Q$ and set $S'=\eta(\ovFP H X)$.  Our goal is to
show $S'$ is the desired semigroup.  We begin by verifying that $\p$
factors through $\eta$.

\begin{Prop}\label{formnumber1}
Let $u\in \ovFP H X$.  Then $\eta(u)=0$ if and only if $\p(u)=0$.
Moreover, if $\eta(u)\neq 0$, then $\eta(u)$ is a block $p\times
p$-matrix in which each block row contains a (non-zero) block
entry $U\in H\wr (B,\mathsf{RLM}_J(S))$,
and for every such block entry
one has $\ov{\alpha}(U)=M_u$.  As a consequence, $\eta(u)=\eta(u')$ implies
$\p(u)=\p(u')$ and so $\p$ factors through $\eta$ as $\rho\eta$
where $\rho\colon S'\to S$ takes $\eta(u)$ to $\ov{\alpha}(U)$ where $U$
is any block entry of $\eta(u)$.
\end{Prop}
\begin{proof}
The final statement follows from the previous ones since $\eta(u)=\eta(u')$
then implies $M_u=M_{u'}$ and so $\p(u)=\p(u')$.

We next prove the remaining part of the statement for words $u\in X^+$
by induction on
length, the case $|u|=1$ being trivial.
The result is also trivial for words containing $x_{n+1}$, so we only
deal with words not containing this element.
Suppose that $w=x_iu$ with $1\leq i\leq n$ and $u\in X^+$.
By induction $\p(u)=0$ if and only if $\eta(u)=0$
and so it only remains to deal with the case $\p(u)\neq 0$ and
$\eta(u)\neq 0$.  We recall that since the wreath product consists of
row monomial matrices, each block row of an element of $Q$
can have at most one block entry.

Let $1\leq j\leq p$.  By induction, $\eta(u)$ has a unique (non-zero) block entry $U_j$ in the $j^{th}$-block row.  The definition of $\til x_i$ implies that the $j^{th}$-block row
of $\eta(w)$ is obtained by multiplying each entry of a certain block
row $\xi(j)$ of $\eta(u)$ on the right by $N_{k_j}M_{x_i}^{\sigma}$
for some $N_{k_j}\in N^b$ (perhaps the identity).  So the only candidate to be a block entry of block row $j$ of $\eta(w)$ is $N_{k_j}M_{x_i}^{\sigma}U_{\xi(j)}$.
We claim that either $N_{k_j}M_{x_i}^{\sigma}U_{\xi(j)}$ is a (non-zero) block entry in the $j^{th}$-row of $\eta(w)$ for all $1\leq j\leq p$, or $\eta(w)=0=\p(w)$.

By induction, $\ov{\alpha}(U_{\xi(j)})=\p(u)$, thus we have
\begin{equation}\label{annoyingcasethatneedstobedone}
\ov{\alpha}(N_{k_j}M^{\sigma}_{x_i}U_{\xi(j)})=M_{x_i}\ov{\alpha}(U_{\xi(j)})=\p(x_i)\p(u)=\p(w)=M_w
\end{equation}
for $j=1,\ldots,p$.
Now the diagram \[\xymatrix{H\wr
  (B,\mathsf{RLM}_J(S))\ar[rr]^{\ov{\alpha}}\ar[rd] &&  K\wr
  (B,\mathsf{RLM}_J(S))\ar[ld]\\ & \mathsf{RLM}_J(S)&}\]
commutes, where the bottommost arrows are the wreath product
projections.
Since an element of a wreath product $L\wr (A,Z)$ is zero if and only
if its image under the wreath product projection is zero, it follows
from~\eqref{annoyingcasethatneedstobedone}
that $N_{k_j}M^{\sigma}_{x_i}U_{\xi(j)}=0$ for some $1\leq j\leq p$ if and only if $N_{k_j}M^{\sigma}_{x_i}U_{\xi(j)}=0$ for all $1\leq j\leq p$, if and only if
$\p(w)=0$.
Therefore, $\eta(w)=0$ if and only if $\p(w)=0$,
and if neither is $0$
then~\eqref{annoyingcasethatneedstobedone} implies that,
as a block $p\times p$-matrix, each
block row of $\eta(w)$ has exactly one (non-zero) block entry,
and each block entry is an  $\ov \alpha$-preimage of $M_w$.

If $u\in \ovFP H X$, then
since $X^+$ is dense
in $\ovFP H X$ and $\eta\inv\eta(u),\pinv\p(u)$ are open,
there exists a word $w\in X^+$ such that
$\eta(u)=\eta(w)$ and $\p(u)=\p(w)$.
The result now
follows from the case of words.
\end{proof}

It now follows that $\ker \eta\subseteq \ker \lambda$ and so
Lemma~\ref{mapsonto} yields  $J'=\eta(J(\mathscr X))$ is an entire
regular $\J$-class of $S'$.
Lemma~\ref{whenitdoesntfall} established $\p(u)=0$ if and only if
$u\notin \ov{L(\mathscr X)}$.  Thus by Proposition~\ref{formnumber1}
we conclude $\eta(u)=0$
if and only if $u\notin \ov{L(\mathscr X)}$ and hence $J'$ is the
unique minimal non-zero $\J$-class of $S'$.
Notice that $\rho(J')=\rho(\eta(J(\mathscr X))) = \p(J(\mathscr X)) =
J$.
In particular, $J'$ is minimal with $\rho(J')\subseteq J$ and so Lemma~\ref{liftclasses} applies.

 Our next goal is to show that if $u\in L(\mathscr X)$ is a word with $x_n\in \mathsf{alph}(u)$, then every preimage of $M_u$ under $\ov {\alpha}$ is a block entry of $\eta(u)$.  This will
be crucial in showing that the maximal subgroup of $\eta(J)$ is isomorphic to $H$.  To effect this we shall need the following
lemma.
Notice that if $U$ is any preimage of
$M_u$, then the complete set of preimages of $M_u$ is $\{N_1U,\ldots,
N_{\ell}U\} = \{UN_1,\ldots,UN_{\ell}\}$
(note that if $M_u$ has any zero rows, then these elements are not distinct).

\begin{Lemma}\label{preimages}
Let $u,w\in \ovFP H X$ and suppose $U$ is a fixed preimage
of $M_u$ under $\ov {\alpha}$. Then every preimage of $M_{uw}$ (respectively, $M_{wu}$) under $\ov{\alpha}$ is of the form $UW$ (respectively, $WU$) for some preimage $W$ of $M_w$ under $\ov{\alpha}$.
\end{Lemma}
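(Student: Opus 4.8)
The plan is to deduce the statement directly from the two facts about $\ov{\alpha}$ recorded just before the lemma, with no need to approximate $u$ and $w$ by words: that $\ov{\alpha}$ is surjective, and that two elements of $H\wr(B,\RLM_J(S))$ have the same image under $\ov{\alpha}$ precisely when they differ by left multiplication by some $N_j\in N^b$ (equivalently, by right multiplication by some $N_k\in N^b$). The only other ingredient is that $x\mapsto M_x$ is multiplicative, so that $M_{uw}=M_uM_w$ and $M_{wu}=M_wM_u$; this is immediate since $\p$ is a homomorphism and the Sch\"utzenberger embedding $S\hookrightarrow K\wr(B,\RLM_J(S))$ is one. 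Thus the lemma holds for all $u,w\in\ovFP H X$, not merely for words.

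First I would treat the case of $M_{uw}$. Fix the given preimage $U$ of $M_u$ and, using surjectivity of $\ov{\alpha}$, choose some preimage $W_0$ of $M_w$; then $UW_0$ is a preimage of $M_uM_w=M_{uw}$. Let $T$ be an arbitrary preimage of $M_{uw}$. Since $\ov{\alpha}(T)=\ov{\alpha}(UW_0)$, we may write $T=N_j(UW_0)$ for some $1\le j\le\ell$. Now comes the one move that matters: slide $N_j$ past $U$. Because $\ov{\alpha}(N_j)$ is trivial, $\ov{\alpha}(N_jU)=\ov{\alpha}(U)=M_u$, so the ``dual'' form of the cited fact yields $N_jU=UN_k$ for some $1\le k\le\ell$. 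Hence $T=U(N_kW_0)$, and $W:=N_kW_0$ satisfies $\ov{\alpha}(W)=\ov{\alpha}(N_k)\ov{\alpha}(W_0)=M_w$, so $W$ is a preimage of $M_w$ and $T=UW$, as required.

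The case of $M_{wu}$ is the mirror image: given a preimage $T$ of $M_{wu}=M_wM_u$ and a preimage $W_0$ of $M_w$, write $T=(W_0U)N_j$ using the right-multiplication description of the fibers of $\ov{\alpha}$, then use $\ov{\alpha}(UN_j)=M_u$ to obtain $UN_j=N_kU$ for some $k$, so that $T=(W_0N_k)U=WU$ with $W:=W_0N_k$ a preimage of $M_w$. I expect no genuine obstacle here; the only point meriting care is that $M_u$ may have zero rows, so the sets $\{N_1U,\dots,N_\ell U\}$ and $\{UN_1,\dots,UN_\ell\}$ of preimages of $M_u$ need not consist of $\ell$ distinct matrices --- but this is harmless, since all we ever use is that each of these sets exhausts the full $\ov{\alpha}$-preimage of $M_u$. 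Once $M_{uw}=M_uM_w$ is noted, the whole argument is this short coset manipulation, which I would carry out in detail for $M_{uw}$ and then quote ``by symmetry'' for $M_{wu}$.
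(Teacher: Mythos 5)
Your proof is correct and is essentially the paper's argument: both rest on the multiplicativity of $x\mapsto M_x$ and on the description of the fibers of $\ov{\alpha}$ as left (equivalently right) $N^b$-cosets. The only cosmetic difference is that the paper starts from the canonical preimage $M_w^{\sigma}$ and invokes the right-coset form directly, writing an arbitrary preimage as $UM_w^{\sigma}N_i$ and absorbing $N_i$ into $W$, which spares the extra step of commuting $N_j$ past $U$; your sliding step $N_jU=UN_k$ is nonetheless valid.
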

\begin{proof}
Let $M$ be a preimage of $M_{uw}$ under $\ov{\alpha}$.
Since $UM_w^{\sigma}$ is a preimage of $M_{uw}$ under $\ov{\alpha}$, it follows that $M=UM_w^{\sigma}N_i$ for some $1\leq i\leq \ell$.  But then $W=M_w^{\sigma}N_i$ is an $\ov{\alpha}$-preimage of $M_w$ and $M=UW$.  The statement for $M_{wu}$ is proved dually.
\end{proof}

Observe that if $w\in X^+$ and $x_n\in \mathsf{alph}(w)$, then by definition of $\til x_1,\ldots,\til x_n$,
the block entries of $\eta(w)$ form a single column, in other words, the
$\ov{([p],C_p)}$-component of $\eta(w)$ is a constant map.
We can now prove the aforementioned fact concerning preimages.

\begin{Prop}\label{allthere}
Let $w\in L(\mathscr X)$ with $x_n\in \mathsf{alph}(w)$.  Then the set of
preimages of $M_w$ under $\ov {\alpha}$ is the set of block entries of
$\eta(w)$.
\end{Prop}
\begin{proof}
Let $R$ be the set of words $w\in L(\mathscr X)$ with $x_n\in
\mathsf{alph}(w)$.
We proceed by induction on $|w|$ for $w\in R$. If $|w|=1$, then the proposition follows from the definition of $\til x_n$.

Suppose it is true for words in $R$ of length $n$ and let $w\in R$
have length $n+1$.
Let $W$ be a $\ov{\alpha}$-preimage of $M_w$.
If the first
letter of $w$ is $x_n$, then $w=vx_i$ with $v\in R$, some $i$; else $w=x_iv$
where $v\in R$ and $1\leq i\leq n-1$.
In the latter case, by Lemma~\ref{preimages}
we have
$W=M_{x_i}^{\sigma}V$ for some $\ov{\alpha}$-preimage $V$ of $M_v$.
By induction hypothesis, $V$ is a block entry of
$\eta(v)$. Then, since $\eta(w)=\eta(x_i)\eta(v)$,
it follows from the definition of $\eta(x_i)$
that $M_{x_i}^{\sigma}V$
is a block entry of $\eta(w)$.

In the case $w=vx_i$ for some $v\in R$ and some $i$, the block entries of
$\eta(v)$ are in a single column, say column $j$.  Let $U$ be the block
entry in row $j$ of $\til x_i$; by construction it is an $\ov{\alpha}$-preimage of
$M_{x_i}$. By Lemma~\ref{preimages}
we have $W=VU$
for some $\ov{\alpha}$-preimage $V$ of $M_v$.
By induction hypothesis, $V$ is a block entry of
$\eta(v)$, and so it is in column $j$ of $\eta(v)$. Hence
$VU$ is a block entry of $\eta(w)$.
\end{proof}

A continuity argument allows us to extend the above result beyond words.

\begin{Cor}\label{minimalidealguyallthere}
If $w\in J(\mathscr X)$, then the block entries of $\eta(w)$ are in a single
column and the set of preimages under $\ov{\alpha}$ of $M_w$ is the set of
block entries of $\eta(w)$.
\end{Cor}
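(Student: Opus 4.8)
The plan is to reduce the assertion to Proposition~\ref{allthere} by approximating $w$ with a word of $L(\mathscr X)$ whose alphabet contains $x_n$, and then to transport the conclusion back to $w$ along an $\R$-equivalence. The reason for the detour is that a word merely close to $w$ in $\ovFP H X$ need not involve the letter $x_n$, so one cannot feed $w$ into Proposition~\ref{allthere} directly; instead one first absorbs $x_n$ into an element $\R$-equivalent to $w$ and then arranges for the approximating word to contain $x_n$ explicitly.

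So, given $w\in J(\mathscr X)$, I would first use that $w\in\ov{L(\mathscr X)}$ and $x_n\in\mathsf{alph}(\mathscr X)\subseteq\ov{L(\mathscr X)}$, together with irreducibility of $\ov{L(\mathscr X)}$ (Lemma~\ref{inherited-properties}), to find $v\in\ovFP H X$ with $w'':=wvx_n\in\ov{L(\mathscr X)}$. Since $w''\leq_{\J}w$ and $J(\mathscr X)$ is the minimum $\J$-class of $\ov{L(\mathscr X)}$ (Proposition~\ref{Jclassofsoficshift}), we get $w''\in J(\mathscr X)$, hence $w''\J w$; and since $w''=w\cdot(vx_n)$, stability of compact semigroups gives $w''\R w$, so $w=w''s$ for some $s\in(\ovFP H X)^{1}$. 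Next, using that $y\mapsto(\eta(yx_n),\p(yx_n))$ is continuous into a finite discrete semigroup and that $\ov{L(\mathscr X)}$ is open with $\iota^{-1}(\ov{L(\mathscr X)})=L(\mathscr X)$ (Lemma~\ref{inherited-properties}, since the syntactic semigroup of $L(\mathscr X)$ lies in $\ov{\pv H}$), I would pick a word $a'\in X^{+}$ close enough to $wv$ that $\omega_0:=a'x_n$ lies in $L(\mathscr X)$ and satisfies $\eta(\omega_0)=\eta(w'')$ and $\p(\omega_0)=\p(w'')$, hence $M_{\omega_0}=M_{w''}$.

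Since $x_n\in\mathsf{alph}(\omega_0)$, Proposition~\ref{allthere} now gives that the set of $\ov\alpha$-preimages of $M_{w''}$ equals the set of block entries of $\eta(w'')$, while the observation preceding Proposition~\ref{allthere}, applied to $\omega_0$, shows these block entries lie in a single column of the $p\times p$ block matrix $\eta(w'')$; in particular the block entries of $\eta(w'')$ form the set $\{N_jM_{w''}^{\sigma}:1\le j\le\ell\}$. To finish, note $\eta(w)\neq 0$ by Lemma~\ref{whenitdoesntfall} and Proposition~\ref{formnumber1} (so also $M_w\neq 0$). If $w=w''$ we are done; otherwise, writing $\eta(w)=\eta(w'')\eta(s)$ and letting $c$ be the common column of the block entries of $\eta(w'')$, block row $c$ of $\eta(s)$ must carry a nonzero block entry $W$ (else $\eta(w)=0$), so the block entries of $\eta(w)$ are exactly $\{VW:V\text{ a block entry of }\eta(w'')\}=\{N_j(M_{w''}^{\sigma}W):1\le j\le\ell\}$, all lying in one column. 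Finally, $\ov\alpha(W)=M_s$ by Proposition~\ref{formnumber1}, whence $\ov\alpha(M_{w''}^{\sigma}W)=M_{w''}M_s=\p(w'')\p(s)=\p(w)=M_w$, so $M_{w''}^{\sigma}W$ is one $\ov\alpha$-preimage of $M_w$; by the description of the preimage set recalled before Lemma~\ref{preimages}, $\{N_j(M_{w''}^{\sigma}W):1\le j\le\ell\}$ is the complete set of $\ov\alpha$-preimages of $M_w$, and both assertions of the corollary follow.

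I expect the main obstacle to be the reduction step—guaranteeing that the approximating word actually contains $x_n$—since once $x_n$ is present the remainder is routine bookkeeping with row-monomial block matrices, powered by the factorization $\eta(w)=\eta(w'')\eta(s)$ and the coset description of the preimage sets.
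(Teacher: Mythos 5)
Your argument is correct, but it takes a genuinely different route from the paper's. The paper's proof introduces the continuous homomorphism $c\colon \ovFP H X\to (P(X),\cup)$ recording alphabet content, observes that irreducibility forces $c(J(\mathscr X))=\{\mathsf{alph}(\mathscr X)\}$, and concludes that any word $w_s$ sufficiently close to $w$ satisfies $\mathsf{alph}(w_s)=\mathsf{alph}(\mathscr X)\ni x_n$ together with $\eta(w_s)=\eta(w)$ and $M_{w_s}=M_w$; Proposition~\ref{allthere} then applies to $w_s$ directly and the corollary follows with no further work. You instead force the presence of $x_n$ by hand: you append $vx_n$ on the right to get $w''=wvx_n$, use minimality of $J(\mathscr X)$ and stability to see $w''\R w$, approximate $w''$ by a word of the form $a'x_n\in L(\mathscr X)$, apply Proposition~\ref{allthere} there, and then transport the conclusion back to $w$ along the factorization $\eta(w)=\eta(w'')\eta(s)$ using the row-monomial block structure and the coset description $\{N_1U,\ldots,N_\ell U\}$ of the $\ov\alpha$-fibres. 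All the individual steps check out (in particular $\p(s)\neq 0$ guarantees $\eta(s)\neq 0$ and that the candidate products $V W$ are genuinely non-zero, since $\ov\alpha$ preserves the zero pattern). What the paper's approach buys is brevity and the reusable fact that every element of $J(\mathscr X)$ is a limit of words with full alphabet, which makes the ``single column'' claim immediate; what your approach buys is independence from that observation, at the cost of an extra $\R$-class transfer and a block-matrix computation that must be carried out carefully.
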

\begin{proof}
Consider the continuous homomorphism $c\colon \ovFP H X\to (P(X),\cup)$ defined by setting $c(x)= \{x\}$ for $x\in X$. Recall that we are assuming that $\{x_1,\ldots,x_n\}= \mathsf{alph}(\mathscr X)$.  Since $L(\mathscr X)$ is irreducible, we can find words $v_1,\ldots, v_{n-1}$ so that $x_1v_1x_2\cdots x_{n-1}v_{n-1}x_n\in L(\mathscr X)$.  It follows that $\mathsf{alph}(\mathscr X)\in c(L(\mathscr X)) = c(\ov {L(\mathscr X)})$ (the latter by continuity).  By minimality of $J(\mathscr X)$, we conclude that $c(J(\mathscr X)) = \{\mathsf{alph}(\mathscr X)\}$.  Thus if $\{w_r\}$ is a sequence of words in $X^+$ converging to
$w$, then there exists $R>0$ such that, for $r\geq R$, we have $\mathsf{alph}(w_r)=c(w_r) = \mathsf{alph}(\mathscr X)$.  The semigroup $S'$ is finite so there exists $s\geq R$ with
$\eta(w)=\eta(w_s)$ by continuity of $\eta$. Remembering that $\p =
\rho\eta$, this implies that $\p(w)=\p(w_s)$, or equivalently that
$M_w=M_{w_s}$.   As $\mathsf{alph}(w_s)=\mathsf{alph}(\mathscr X)=\{x_1,\ldots,x_n\}$, the corollary now follows from Proposition~\ref{allthere} and
the remark preceding that proposition applied to $w_s$.
\end{proof}

Recalling that $\mu_J\colon S\to \RLM_J(S)$ denotes the canonical projection, observe that $T=\mu_J(J\cup \{0\})$ is a transitive semigroup of  partial transformations of $B$ of rank at most $1$ containing the empty map. By Corollary~\ref{minimalidealguyallthere} if $w\in J(\mathscr X)$, then the
$\ov{([p],C_p)}$-component of $\eta(w)$ is a constant map, that is,
the block entries of $\eta(w)$ appear in a single column. Moreover,
Proposition~\ref{formnumber1} shows that each block entry of
$\eta(w)$ is a preimage of $M_w$ under $\ov{\alpha}$. Hence, $J'\subseteq H\wr (B,T)\wr
([p],\ov{[p]})$. Moreover, $(B,T)\wr
([p],\ov{[p]})$ is easily verified to be a transitive semigroup of partial transformations of rank at most $1$.  Indeed,  each entry of an element of $(B,T)\wr
([p],\ov{[p]})$ has all of its block entries in a single column and each non-zero element of $T$ is a rank $1$ partial transformation. The transitivity is immediate from the transitivity of $T$ and $\ov{[p]}$ since if $(b,i),(b',j)\in B\times [p]$  and $bt=b'$ with $t\in T$, then $(b,i)D=(b',j)$ where the block entries of $D$ are all in column $j$ and each block entry of $D$ is $t$.    Lemma~\ref{structureofwreath} now implies that $H\wr (B,T)\wr
([p],\ov{[p]})$ is $0$-simple.

It remains to construct an
isomorphism $\theta\colon G_{\eta(e)}\to H$ such that
$\rho\theta\inv=\alpha$.
Corollary~\ref{minimalidealguyallthere} yields that all the block entries of $\eta(e)$ belong to a single column.  By cyclically permuting the names of the elements of $[p]$, we may assume without loss of generality that
$\eta(e)$ is a block matrix with each block entry in the first
column.  Also, the discussion in Section~\ref{minimal} indicates
$M_e$ is a matrix whose only non-zero column is the first column and
whose non-zero entries are comprised of the identity of $K$; moreover, the $1,1$-entry of $M_e$ is the identity of $K$. Since the block entries of $\eta(e)$ are preimages of $M_e$ under
$\ov{\alpha}$ (Proposition~\ref{formnumber1}), we deduce that all the
non-zero entries of $\eta(e)$ are in the
first column and belong to $N$. Lemma~\ref{structureofwreath} says that
the map $\Theta\colon H\wr (B,T)\wr ([p],\ov{[p]})\to H$ selecting the
$1,1$-entry of a matrix is an isomorphism from the maximal subgroup at $\eta(e)$
of $H\wr (B,T)\wr ([p],\ov{[p]})$ to $H$.  In particular,
$\eta(e)_{11}$ is the identity of $H$. We shall show that the restriction $\theta$ of $\Theta$ to $G_{\eta(e)}$ is onto and $\rho\theta\inv=\alpha$.  This will require a little preparation.

\begin{Prop}\label{describerho}
If $u\in G_e$, the $\p(u)=\alpha(\eta(u)_{11})$.
\end{Prop}
\begin{proof}
Corollary~\ref{minimalidealguyallthere} implies that all the block entries of $\eta(u)$ are in a single column.  In fact, they are all in the first column since we just saw that this is the case for $\eta(e)$ and $\eta(u)=\eta(u)\eta(e)$.  Proposition~\ref{formnumber1} implies that $M_u$ is the matrix obtained by choosing any block entry of $\eta(u)$ and applying $\ov{\alpha}$.  In particular, $M_u$ is the result of applying $\alpha$ entry-wise to the $1,1$-block entry of $\eta(u)$ and so $[M_u]_{11} = \alpha(\eta(u)_{11})$.

Now if $k\in K$, then according to first paragraph of the proof of Lemma~\ref{maintechnical} the row monomial matrix associated to $k$ has $1,1$-entry $k$.  In particular, since $\p(u)\in K$, it follows that $[M_u]_{11}=\p(u)$.  The last statement of the previous paragraph then yields $\p(u)=\alpha(\eta(u)_{11})$, as required.
\end{proof}

The proposition admits the following corollary.

\begin{Cor}\label{workontheta}
The equality $\alpha\theta=\rho|_{G_{\eta(e)}}$ holds.
\end{Cor}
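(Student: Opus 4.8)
\emph{Plan.} The idea is to unwind the definitions of $\theta$ and $\rho$ on a single element and reduce the asserted identity to Proposition~\ref{describerho}. First I would record that the restriction $\eta|_{G_e}\colon G_e\to G_{\eta(e)}$ is surjective: since $\ker\eta\subseteq\ker\lambda$ (noted just after Proposition~\ref{formnumber1}), Lemma~\ref{mapsonto} applies with $\eta$ and $\rho$ in the roles of $\p$ and $\psi$, so $\eta$ sends the maximal subgroup $G_e$ of $J(\mathscr X)$ onto a maximal subgroup of $J'=\eta(J(\mathscr X))$; as that image contains $\eta(e)$, it is exactly $G_{\eta(e)}$. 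Hence, given an arbitrary $s\in G_{\eta(e)}$, I may choose $u\in G_e$ with $\eta(u)=s$.

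With such a $u$ fixed, I would simply compute both sides. On the one hand $\theta(s)=\Theta(s)=s_{11}=\eta(u)_{11}$, so $\alpha(\theta(s))=\alpha(\eta(u)_{11})$, and this equals $\p(u)$ by Proposition~\ref{describerho} (applicable precisely because $u\in G_e$). On the other hand $\p=\rho\eta$ by Proposition~\ref{formnumber1}, so $\rho(s)=\rho(\eta(u))=\p(u)$. Comparing the two computations yields $\alpha(\theta(s))=\rho(s)$, and since $s\in G_{\eta(e)}$ was arbitrary this is exactly $\alpha\theta=\rho|_{G_{\eta(e)}}$.

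I do not anticipate any real obstacle here: all the substantive content is already contained in Propositions~\ref{formnumber1} and~\ref{describerho} and in the surjectivity of $\eta|_{G_e}$. The only two points deserving a sentence of care are (i) that $\theta$ is literally evaluation at the $(1,1)$-entry, so that the quantity $\eta(u)_{11}$ appearing in Proposition~\ref{describerho} coincides with $\theta(\eta(u))$ — pure bookkeeping of the wreath-product notation; and (ii) that both sides of the claimed identity do land in $K$, since $\rho$ carries $G_{\eta(e)}$ into the maximal subgroup of $J$ at $\rho(\eta(e))=\p(e)$, which is $K$, while the image of $\alpha\theta$ lies in $\alpha(H)=K$ — so the equation makes sense as written.
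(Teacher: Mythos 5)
Your argument is correct and is essentially the paper's own proof: both reduce the identity to Proposition~\ref{describerho} (giving $\p=\alpha\theta\eta$ on $G_e$), the factorization $\p=\rho\eta$ from Proposition~\ref{formnumber1}, and the surjectivity $\eta(G_e)=G_{\eta(e)}$ supplied by Lemma~\ref{mapsonto}. You merely phrase the comparison elementwise where the paper phrases it as an equality of composite maps; there is no substantive difference.
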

\begin{proof}
Recalling that $\theta$ selects the $1,1$-entry of an element of $G_{\eta(e)}$, Proposition~\ref{describerho} shows that $\p = \alpha\theta\eta$ as maps from $G_e$ to $K$.  By definition of $\rho$ there is a factorization $\p = \rho\eta$ and hence, in fact, $\rho\eta=\alpha\theta\eta\colon G_e\to K$.  But $\eta(G_e)=G_{\eta(e)}$ by Lemma~\ref{mapsonto}, so we conclude that $\rho|_{G_{\eta(e)}} =\alpha\theta$ as was to be proved.
\end{proof}

Since $\theta$ is injective, being a restriction of the isomorphism $\Theta$,
Corollary~\ref{workontheta} immediately yields that if $\theta$ is onto, then $\alpha = \rho\theta\inv$.  Thus we are left with proving $\theta$ is onto.  Since $J'$ is the minimal $\J$-class with $\rho(J')\subseteq J$ (as was already observed) $\rho$ must take $G_{\eta(e)}$ onto $K$ by Lemma~\ref{liftclasses}.  It
follows from Corollary~\ref{workontheta} that $\alpha$ maps $\theta(G_{\eta(e)})$ onto $K$.  Recalling $\ker\alpha =N$, we conclude $H=N\theta(G_{\eta(e)})$ and so to complete the proof it suffices to establish that $N$ is contained in the image of $\theta$.

Recall that we have a word $z=z_1\cdots z_q$ such that $z_i\in \{x_1,\ldots, x_{n-1}\}^+$, for $i=1,\ldots,q$, the letter $x_1$ is a factor of $z$  and $z^{\omega}e=e$. Set $Z=M^{\sigma}_{z_1}\cdots M^{\sigma}_{z_q}\in H\wr (B,\RLM_J(S))$. Let us remind the reader that our prime $p$ was chosen so that $p>m$ where $Z^m = Z^{\omega}$ and $p>|z|_{x_1}$.  Thus, we can find a positive integer
$r$ so that $1\equiv rm|z|_{x_1}\bmod p$.
Direct computation shows that \[\eta(z)^{rm} = \begin{bmatrix} 0 & Z^{\omega}& 0              &\cdots & 0     \\
                            0 & 0               &Z^{\omega}&0      &\cdots \\
                            0 & 0               &  0             &\ddots & 0     \\
                            0 & 0               & \cdots         & 0     & Z^{\omega}\\
                            Z^{\omega}& 0 &\cdots          &
                            0&0\end{bmatrix}\]
since every block entry of $\eta(z)$ is $Z$ and $\eta(z)$ acts in the $[p]$-component by the permutation $(1\ 2\ \cdots p)^{|z|_{x_1}}$.
Set $C=\eta(z)^{rm}$.  Then $C^j$ has the block form of the
permutation matrix corresponding to $(1\ 2\ \cdots p)^j$ and each
block entry of $C^j$ is $Z^{\omega}$.  In particular, the effect of multiplying a matrix $D$ on the left by $C^j$ is to permute the rows of $D$ according to the permutation $(1\ 2\ \cdots p)^{-j}$ and to multiply each row of $D$ on the left by $Z^{\omega}$.

Notice that $\eta(e)=\eta(z)^{\omega}\eta(e)$ as $z^{\omega}e=e$.  But $\eta(z)^{\omega}$ is a $p\times p$-block diagonal matrix with $Z^{\omega}$ as each diagonal block.  Thus the set of elements of the form $Z^{\omega}U$ with $U$ a block entry of $\eta(e)$ is precisely the set of block entries of $\eta(e)$, which is precisely the set of preimages of $M_e$ under $\overline{\alpha}$ by Corollary~\ref{minimalidealguyallthere}. Each preimage of $M_e$ is therefore the $1,1$-block entry of a product $C^j\eta(e)$ for a correctly chosen $j$ as $(1\ 2\ \cdots p)$ acts transitively on $\{1,\ldots,p\}$ and all the block entries of $\eta(e)$ are in the first column.

Now $M_e$ is a matrix all of whose non-zero entries are the identity of $K$ and belong to the first column;  moreover, the $1,1$-entry of $M_e$ is the identity of $K$. It follows that the $\ov{\alpha}$-preimages of $M_e$ are precisely
those matrices with first column having entries from $N=\ker \alpha$ in those rows that are non-zero in $M_e$ and whose remaining columns consist of zeroes.  Consequently, any element of
$N$ can be the $1,1$-entry of an $\ov{\alpha}$-preimage of $M_e$ and
so every element $h\in N$ is $[C^j\eta(e)]_{11}$ for some $j$ (as an entry, not a block entry).
Since $\eta(e)_{11}$ is the identity of $H$, it follows
$\eta(e)C^j\eta(e)$ has
$1,1$-entry $h$, and in particular is a non-zero element of $S'$.  Thus $\eta(e)C^j\eta(e)$ is an element of $G_{\eta(e)}$ by minimality of $J'$ among non-zero $\J$-classes of $S'$.  By construction, $\theta(\eta(e)C^j\eta(e))=h$ and so $\theta(G_{\eta(e)})$ contains $N$
as required.   This completes the proof of Lemma~\ref{maintechnical},
thereby establishing Theorem~\ref{Theorem1}.
\end{proof}

\section{Computing idempotents in the $\J$-class of a sofic shift}
Let $X$ be a finite alphabet.  An element $w$ of $\wh {X^+}$
is said to be \emph{(polynomial time) computable}
if there is an algorithm which on input an $X$-tuple $(s_x)_{x\in X}$ of elements from a finite semigroup $S$, computes (in time polynomial in $|S|$) the value $\p(w)$ where $\p\colon \wh {X^+}\to S$ is the canonical extension of the map $x\mapsto s_x$.  The existence of a computable idempotent in the minimal ideal of $\wh{X^+}$ was proved independently by Reilly and Zhang~\cite{ReillyZhangidempotent} on the one hand, and by Almeida and Volkov~\cite{AlmeidaVolkov} on the other.
The Reilly-Zhang idempotent was shown to be polynomial time computable in~\cite{AlmeidaVolkov}.

Let $\mathscr X$ be an irreducible sofic shift
over an alphabet $X$. Our goal is to construct a computable idempotent in $J(\mathscr X)$.  In fact, we give an algorithm which is uniform in the sofic shift, given as input via a so-called irreducible presentation.  First we need a lemma.

\begin{Lemma}\label{bottomcriterion}
Let $\mathscr X\subseteq X^{\mathbb Z}$ be an irreducible sofic shift whose syntactic semigroup is in a variety $\pv V$ of finite semigroups.  Let $\iota\colon X^+\to \wh F_{\pv V}(X)$ be the canonical morphism.  Then $u\in \ov{\iota(L(\mathscr X))}$ belongs to $J(\mathscr X)$ if and only if each element of $\iota(L(\mathscr X))$ is a factor of $u$.
\end{Lemma}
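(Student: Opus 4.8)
The plan is to work inside the compact semigroup $S=\wh F_{\pv V}(X)$ and to translate the relation ``is a factor of'' into the $\J$-preorder, using that for $v,u\in S$ one has ``$v$ is a factor of $u$'' if and only if $u=avb$ for some $a,b\in S^1$, i.e.\ $u\leq_{\J}v$. First I would record the standing facts: since $L(\mathscr X)$ is non-empty, factorial, prolongable and irreducible, and $\pv V$ contains its syntactic semigroup, Lemma~\ref{inherited-properties} shows that $A:=\ov{\iota(L(\mathscr X))}$ is a closed, non-empty, factorial, irreducible subset of $S$; hence by Proposition~\ref{JclassesasFI} (see also Proposition~\ref{Jclassofsoficshift}) we have $A=\mathrm{Fact}(J(\mathscr X))$, the $\J$-class $J(\mathscr X)$ is regular and contained in $A$, and $J(\mathscr X)\leq_{\J}v$ for every $v\in A$.

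For the forward implication this is then immediate: if $u\in J(\mathscr X)$ and $v\in\iota(L(\mathscr X))\subseteq A$, then $u\J J(\mathscr X)\leq_{\J}v$, so $u\leq_{\J}v$, that is, $v$ is a factor of $u$; in particular every element of $\iota(L(\mathscr X))$ is a factor of $u$.

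For the converse the one extra ingredient needed is that, for a fixed $u$, the set $F_u:=\{v\in S:v\text{ is a factor of }u\}$ is closed. I would prove this by a routine compactness argument: $S^1$ is a compact semigroup, so the multiplication map $g\colon S^1\times S\times S^1\to S$, $g(a,v,b)=avb$, is continuous, whence $g\inv(u)$ is compact and its image $F_u$ under the projection to the middle coordinate is compact, hence closed. Now assume $u\in A$ and that every element of $\iota(L(\mathscr X))$ is a factor of $u$, i.e.\ $\iota(L(\mathscr X))\subseteq F_u$. Since $F_u$ is closed, $A=\ov{\iota(L(\mathscr X))}\subseteq F_u$; in particular $J(\mathscr X)\subseteq F_u$, so $u\leq_{\J}t$ for every $t\in J(\mathscr X)$ and therefore $u\leq_{\J}J(\mathscr X)$. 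On the other hand $u\in A$ gives $u\geq_{\J}J(\mathscr X)$. Combining, $u\J J(\mathscr X)$, i.e.\ $u\in J(\mathscr X)$, as required.

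The argument is short; the only step requiring any care --- the ``main obstacle'' --- is the closedness of $F_u$, and even that is a standard compactness manipulation. Everything else is bookkeeping with the $\J$-order together with the already-established properties of $J(\mathscr X)$ from Propositions~\ref{JclassesasFI} and~\ref{Jclassofsoficshift}.
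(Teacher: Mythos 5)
Your proof is correct and follows essentially the same route as the paper's: the forward direction is the minimality of $J(\mathscr X)$, and the converse rests on the same compactness argument, which the paper runs inline (approximating a fixed $v\in J(\mathscr X)$ by words $v_n\in L(\mathscr X)$ and extracting a convergent subnet of the multipliers $r_n,s_n$) and which you package as the closedness of the factor set $F_u$. The only cosmetic difference is that by working with $F_u$ you bypass the paper's appeal to the clopenness of $\ov{\iota(L(\mathscr X))}$, which it uses to force the approximating words into $L(\mathscr X)$ before applying the hypothesis.
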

\begin{proof}
Clearly each element of $\iota(L(\mathscr X))$ is a factor of each element of
$J(\mathscr X)$ by minimality of $J(\mathscr X)$.
Suppose conversely, that each element of $\iota(L(\mathscr X))$ is a factor
of
$u\in \ov{\iota(L(\mathscr X))}$.
Let $v\in J(\mathscr X)$ and write $v=\lim \iota(v_n)$ with the $v_n$ in
$X^+$.
Since
$\ov{\iota(L(\mathscr X))}$ is clopen, it follows that, for all $n$ sufficiently large, $\iota(v_n)\in \ov{\iota(L(\mathscr X))}\cap \iota(X^+) = \iota(L(\mathscr X))$  (since $L(\mathscr X)$ is $\pv V$-recognizable~\cite{Almeida:book}).  Hence we may assume that $v_n\in L(\mathscr X)$ for all $n$.  By hypothesis on $u$, we can find, for each $n$, elements $r_n,s_n\in \wh F_{\pv V}(X)$ so that $r_n\iota(v_n)s_n=u$.  Passing to a subsequence, we may assume that $r_n\to r, s_n\to s$ for some $r,s\in \wh F_{\pv V}(X)$.  Then $u=rvs$ and so $u\in J(\mathscr X)$ by minimality of $J(\mathscr X)$.
\end{proof}

The following can be found in~\cite[Chapter 3]{MarcusandLind}.  For an
irreducible sofic shift
$\mathscr X$ there is a
strongly connected graph $\Gamma$
with non-empty set $E$ of edges,
and a map $\pi\colon E\to X$ such that,
if $P$ is the set of paths in $\Gamma$ then,
denoting the unique extension of
$\pi$ to a homomorphism $E^+\to X^+$ also by $\pi$,
we have $\pi(P)=L(\mathscr X)$. 
We say that $(\Gamma,\pi)$ is
an \emph{irreducible presentation} of
$\mathscr X$.  In other words, an irreducible presentation is a strongly connected non-deterministic automaton, all of whose states are initial and final, recognizing $L(\mathscr X)$.

We shall reduce our problem to producing a computable idempotent in the kernel of a clopen subsemigroup of $\wh {X^+}$.

\begin{Lemma}\label{reducetoratsubsemigroup}
Let $(\Gamma,\pi)$ be an irreducible presentation of a sofic shift $\mathscr X\subseteq X^{\mathbb Z}$ and let $v$ be a vertex of $\Gamma$.  Let $T\subseteq X^+$ be the rational subsemigroup of all words in $X^+$ reading a loop at $v$.  Then each element of the minimal ideal of $\ov T\subseteq \wh{X^+}$ belongs to $J(\mathscr X)$.
\end{Lemma}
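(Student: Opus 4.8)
The plan is to reduce everything to the criterion of Lemma~\ref{bottomcriterion}, working inside $\wh{X^+}=\wh F_{\pv V}(X)$ with $\pv V$ the variety of all finite semigroups and identifying $X^+$ with its image under the canonical embedding $\iota\colon X^+\to\wh{X^+}$. First I would observe that every word of $T$ reads a loop at $v$, which is in particular a path of $\Gamma$, so $T\subseteq\pi(P)=L(\mathscr X)$, and hence, on passing to closures, $\ov T\subseteq\ov{L(\mathscr X)}$. Since $T$ is a subsemigroup of $X^+$, its closure $\ov T$ is a closed, hence compact, subsemigroup of $\wh{X^+}$ and therefore has a minimal ideal $I$. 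Fix $w\in I$. As $w\in\ov T\subseteq\ov{\iota(L(\mathscr X))}$, Lemma~\ref{bottomcriterion} tells us that to conclude $w\in J(\mathscr X)$ it is enough to prove that every element of $L(\mathscr X)$ is a factor of $w$ in $\wh{X^+}$.

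To obtain this, fix $\ell\in L(\mathscr X)$ and write $\ell=\pi(\rho)$ for a nonempty path $\rho$ of $\Gamma$, say from $p$ to $q$. Using that $\Gamma$ is strongly connected, choose a path $\rho_1$ from $v$ to $p$ and a path $\rho_2$ from $q$ to $v$; then $\rho_1\rho\rho_2$ is a loop at $v$, so, setting $a=\pi(\rho_1)$ and $b=\pi(\rho_2)$ (words of $X^*$), the word $a\ell b$ lies in $T\subseteq\ov T$. Because $w$ lies in the minimal ideal of $\ov T$, it belongs to the principal two-sided ideal of $\ov T$ generated by $a\ell b$, i.e.\ $w=s\,(a\ell b)\,t$ for some $s,t\in(\ov T)^1\subseteq(\wh{X^+})^1$; regrouping this product inside $\wh{X^+}$ gives $w=(sa)\,\ell\,(bt)$, which exhibits $\ell$ as a factor of $w$. (Equivalently: membership in the minimal ideal forces $w\leq_{\J}t'$ in $\wh{X^+}$ for every $t'\in T$, while strong connectedness makes every element of $L(\mathscr X)$ a factor of some element of $T$.) Applying Lemma~\ref{bottomcriterion} then yields $w\in J(\mathscr X)$, as required.

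I do not expect a serious obstacle: the argument is essentially the combinatorial remark that loops at $v$ already witness every element of $L(\mathscr X)$ as a factor, combined with the elementary fact that the minimal ideal of a compact semigroup is contained in every two-sided ideal~\cite[Chapter 3]{qtheor}. The only points that need a little care are verifying the hypotheses of Lemma~\ref{bottomcriterion} (they hold because $\pv V$ is the variety of all finite semigroups, so the syntactic semigroup of $L(\mathscr X)$ lies in $\pv V$ and $\ov{\iota(L(\mathscr X))}$ is clopen) and noting that a factorization witnessing an ideal membership in $\ov T$ is equally valid in $\wh{X^+}$, so that adjoining identities to $\ov T$ causes no trouble when one or both of $s,t$ equal $1$.
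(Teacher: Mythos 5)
Your proposal is correct and follows essentially the same route as the paper's proof: both reduce to Lemma~\ref{bottomcriterion} via the observations that $T\subseteq L(\mathscr X)$, that the minimal ideal of the compact semigroup $\ov T$ lies in every two-sided ideal (so every element of $T$ is a factor of $w$), and that strong connectedness of $\Gamma$ lets every word of $L(\mathscr X)$ be completed to a loop at $v$. Your write-up merely makes explicit the details the paper leaves implicit.
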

\begin{proof}
By construction, $T\subseteq L(\mathscr X)$ and hence $\ov T\subseteq \ov{L(\mathscr X)}$.
Suppose $t$ belongs to the minimal ideal of $\ov T$.  Then each element of $T$ is a factor of $t$.  But since $\Gamma$ is strongly connected, each word labeling a path in $\Gamma$ is a factor of an element of $T$.  Thus $t\in J(\mathscr X)$ by Lemma~\ref{bottomcriterion}.
\end{proof}

In light of Lemma~\ref{reducetoratsubsemigroup}, to achieve our goal, it suffices to construct a computable idempotent $\rho_T$ in the minimal ideal of $\ov{T}$ for each rational subsemigroup $T\subseteq X^+$.  Moreover, our algorithm will be uniform in $T$ (meaning, given an automaton for $T$ and an $X$-tuple of a finite semigroup $S$, we can compute the value of $\rho_T$ on this $X$-tuple).

\begin{Lemma}\label{rationalsubsetbound}
Let $L\subseteq X^+$ be a rational subset and $\p\colon X^+\to S$ a homomorphism.  Then each element in $\p(L)$ can be represented by a word in $L$ of length at most $m(|S|+1)-1$ where $m$ is the number of states of the minimal automaton for $L$.
\end{Lemma}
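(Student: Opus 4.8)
The plan is to run a standard pumping argument in the product of the minimal automaton of $L$ with the semigroup $S^1$, pruning a shortest representative whenever it is too long. First I would fix $s\in\p(L)$ and choose a word $w=a_1a_2\cdots a_n\in L$ with $\p(w)=s$ of \emph{minimal length}; the goal is to show $n\le m(|S|+1)-1$. Let $\mathcal A$ be the minimal (deterministic) automaton recognizing $L$, with set of states $Q$ of size $m$, initial state $q_0$, set of final states $F$, and transition function extended to words as usual. Since $w\in L$, its run $q_0=r_0,r_1,\ldots,r_n$ in $\mathcal A$ is defined and ends in $r_n\in F$, where $r_i$ is the state reached after reading $a_1\cdots a_i$. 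For $0\le i\le n$ I would also record $t_i=\p(a_1\cdots a_i)\in S^1$, with the convention $t_0=1$ (the functorially adjoined identity); note that $t_i\in S$ for every $i\ge 1$, since $\p$ sends nonempty words into $S$ and $1\notin S$.

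Then I would apply the pigeonhole principle to the $n+1$ pairs $(r_i,t_i)\in Q\times S^1$. If $n\ge m(|S|+1)$, then $n+1>|Q\times S^1|=m(|S|+1)$, so there are indices $0\le i<j\le n$ with $(r_i,t_i)=(r_j,t_j)$. I would then form $w'=a_1\cdots a_i\,a_{j+1}\cdots a_n$, deleting the factor $a_{i+1}\cdots a_j$. Because $r_i=r_j$ and $\mathcal A$ is deterministic, reading $a_{j+1}\cdots a_n$ from $r_i$ lands in the same accepting state $r_n\in F$ as in the run of $w$, so $w'\in L$. Moreover $\p(w)=t_j\cdot\p(a_{j+1}\cdots a_n)=t_i\cdot\p(a_{j+1}\cdots a_n)=\p(w')$, so $\p(w')=s$ as well.

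The one point that needs care is that the pumped-down word $w'$ must stay nonempty, i.e. must really belong to $X^+$; this is the only place the argument could conceivably go wrong. I would resolve it by observing that $t_0=1\notin S$ while $t_k\in S$ for all $k\ge 1$, so the equality $t_i=t_j$ forces $i\ge 1$; hence $w'$ contains the letter $a_1$ and lies in $X^+$. Since $j>i$ we get $|w'|=n-(j-i)<n=|w|$, producing a strictly shorter word of $L$ with the same $\p$-image, contradicting minimality of $w$. Therefore $n\le m(|S|+1)-1$, as claimed. Everything besides this nonemptiness check is routine bookkeeping, so I expect no real obstacle.
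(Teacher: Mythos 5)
Your proof is correct and is essentially the paper's argument: the paper builds the product automaton on $Q\times S^1$ (with $m(|S|+1)$ states, initial state $(q_0,1)$, accepting states $F\times S$) and takes a shortest accepting path, which is exactly your pigeonhole-and-pump-down on the pairs $(r_i,t_i)$. Your explicit check that $t_0=1\notin S$ forces $i\ge 1$, keeping the pumped word in $X^+$, corresponds to the paper's implicit observation that the initial state is not accepting.
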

\begin{proof}
Let $\mathscr A=(Q,X,\delta,q_0,F)$ be the minimal automaton for $L$ and construct an automaton $\mathscr B=(Q\times S^1,X,\Delta,(q_0,1),F\times S)$ where $(q,s)x=(qx,s\p(x))$ describes the transition function.  Then $s\in \p(L)$ if and only if there is a word $w\in X^+$ reading in $\mathscr B$ from $(q_0,1)$ to a state of the form $(q,s)$ with $q\in F$.   Since $\mathscr B$ has $m(|S|+1)$ states, such a word can always be chosen to have length at most $m(|S|+1)-1$.
\end{proof}

We can now construct our idempotent using the `Zimin word' idea of Reilly and Zhang (see also~\cite{AlmeidaVolkov}).  We assume that the alphabet $X$ is totally ordered.  The \emph{shortlex} order is defined on $X^+$ by putting $u<v$ if $|u|<|v|$ or $|u|=|v|$ and $u$ lexicographically precedes $v$.

\begin{Thm}\label{computableidempotent}
Let $T\subseteq X^+$ be a rational subsemigroup.  Let $v_1,v_2,\ldots$ be the elements of $T$ in shortlex order and put $w_1=v_1$ and $w_{n+1}=(w_nv_{n+1}w_n)^{(n+1)!}$ for $n\geq 1$.  Then the sequence $\{w_n\}$ converges to a computable idempotent $\rho_T$ of the minimal ideal of $\ov {T}\subseteq \wh{X^+}$.
\end{Thm}
\begin{proof}
First of all, since $T$ has decidable membership and there is a Turing machine that enumerates $X^+$ in shortlex order, clearly there is a Turing machine that can compute the element $w_n$ of the sequence given $n$ as input.  Let $\p\colon X^+\to S$ be a morphism where $S$ is a finite semigroup of order $k$.  Let $m$ be the number of states of the minimal automaton of $T$.  Put $r=m(k+1)-1$ and $N=|X|+|X|^2+\cdots+|X|^r$.  We claim that $\p(w_N) = \p(w_n)$ for all $n\geq N$ and that $\p(w_N)$ is an idempotent of the minimal ideal of $\p(T)$.  It will then follow that $\{w_n\}$ converges to a computable idempotent in the minimal ideal of $\ov{T}$.

First observe that for $n\geq k$, the elements $\p(w_n)$ form a descending chain of idempotents.  Now, by choice of $N$, for $n\geq N$, every word in $T$ of length at most $r$ is a factor of $w_n$.  Lemma~\ref{rationalsubsetbound} then yields that every element of $\p(T)$ is a factor of $\p(w_n)$; consequently, $\p(w_n)$ is an element of the minimal ideal $I$ of $\p(T)$. But $I$ is a completely simple semigroup and so contains no strictly descending chains of idempotents.  Thus $\p(w_n)=\p(w_N)$ for all $n\geq N$.  This completes the proof.
\end{proof}

The proof shows the construction is uniform in $T$.  Applying Lemma~\ref{reducetoratsubsemigroup}, we obtain:

\begin{Cor}
If $\mathscr X\subseteq X^{\mathbb Z}$ is an irreducible sofic shift, then there is a computable idempotent in $J(\mathscr X)$.
\end{Cor}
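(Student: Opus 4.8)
The plan is simply to combine the two results just established. First I would invoke the fact recalled above from~\cite[Chapter 3]{MarcusandLind} to fix an irreducible presentation $(\Gamma,\pi)$ of $\mathscr X$; such a presentation exists because $\mathscr X$ is sofic, and by definition $\Gamma$ is strongly connected with a non-empty set $E$ of edges, so in particular it has at least one vertex. Choose any vertex $v$ of $\Gamma$ and let $T\subseteq X^+$ be the rational subsemigroup of all words labelling a loop at $v$. This $T$ is non-empty: since $E\neq\emptyset$ and $\Gamma$ is strongly connected, there is a loop at $v$ through any prescribed edge. It is rational because it is exactly the language recognized by $\Gamma$ with $v$ taken as the unique initial and unique final state.

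Next I would apply Theorem~\ref{computableidempotent} to $T$. This produces the element $\rho_T=\lim w_n$, where $w_1=v_1$ and $w_{n+1}=(w_nv_{n+1}w_n)^{(n+1)!}$ for the shortlex enumeration $v_1,v_2,\ldots$ of $T$, and this $\rho_T$ is a computable idempotent lying in the minimal ideal of $\ov T\subseteq\wh{X^+}$. Finally, Lemma~\ref{reducetoratsubsemigroup} states precisely that every element of the minimal ideal of $\ov T$ belongs to $J(\mathscr X)$; hence $\rho_T\in J(\mathscr X)$, and $\rho_T$ is the required computable idempotent.

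I do not foresee any genuine obstacle here, since both ingredients are already in hand. The only small points needing a word are that $T$ is non-empty and rational (immediate from strong connectedness of $\Gamma$ and non-emptiness of $E$) and that the computability of $\rho_T$, which is a property of $\rho_T$ as an element of $\wh{X^+}$ tested against arbitrary $X$-tuples of finite semigroups, is in no way affected by the fact that $\rho_T$ happens to lie in $J(\mathscr X)$. If desired, I would add a closing remark that the construction is uniform in the sofic shift: an irreducible presentation $(\Gamma,\pi)$ can be produced algorithmically from any finite automaton recognizing $L(\mathscr X)$, and Theorem~\ref{computableidempotent} is already uniform in the rational subsemigroup $T$, so the algorithm computing $\rho_T$ depends only on the given presentation together with the input $X$-tuple.
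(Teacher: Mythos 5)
Your proposal is correct and follows exactly the paper's route: fix an irreducible presentation, take the rational loop subsemigroup $T$ at a vertex, apply Theorem~\ref{computableidempotent} to get the computable idempotent $\rho_T$ in the minimal ideal of $\ov{T}$, and conclude via Lemma~\ref{reducetoratsubsemigroup} that $\rho_T\in J(\mathscr X)$. The extra verifications you supply (non-emptiness and rationality of $T$, uniformity of the construction) are all consistent with what the paper leaves implicit.
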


We leave it as an open question whether the polynomial time algorithm in~\cite{AlmeidaVolkov} to compute the Reilly-Zhang idempotent (which is our idempotent for $T=X^+$) can be extended to arbitrary rational subsemigroups.

\section{Entropy}\label{sec:entropy}

  Let $\mathscr X$ be a shift of $X^{\mathbb Z}$.
  The \emph{complexity function of $\mathscr X$}
  is the map $q_{\mathscr X}$
  that assigns to each positive integer $n$ the number
  of elements of $L(\mathscr X)$ with length $n$.
  This map satisfies
  the property
  $q_{\mathscr X}(n+m)\leq q_{\mathscr X}(n)\cdot q_{\mathscr X}(m)$.
  As proved in~\cite[Lemma~4.1.7]{MarcusandLind},
  this property implies the convergence of
  the sequence $\{\frac{1}{n}\log_{2}q_{\mathscr X}(n)\}$
  to its infimum $h(\mathscr X)$, called the \emph{entropy of~$\mathscr X$}.
  Complexity functions and entropy are
  fundamental notions in symbolic dynamics.
  In~\cite{AlmeidaVolkov2} these notions were adapted
  to the elements of $\wh{F}_{\pv V}(X)$,
  where $\pv V$ is a variety of finite semigroups containing
  $\mathbf {LSl}$. More precisely,
  the \emph{complexity function of an element $w$ of $\wh{F}_{\pv V}(X)$}
  is the map $q_w$
  that assigns to each positive integer $n$ the number
  of finite factors of $w$ with length $n$; this map
  also satisfies
  $q_{w}(n+m)\leq q_{w}(n)\cdot q_{w}(m)$,
  and so, if $w\notin X^+$, the sequence
  $\{\frac{1}{n}\log_{2}q_{w}(n)\}$
  converges to its infimum $h(w)$, called the \emph{entropy of $w$}.
  The entropy of elements of $X^\ast$ is defined to be $0$.

  One should be more precise and say that
  in~\cite{AlmeidaVolkov2}
  the entropy of $w\in \wh{F}_{\pv V}(X)\setminus X^+$
  is defined as the limit of
  $\{\frac{1}{n}\log_{|X|}q_{w}(n)\}$,
  which is $h(w)\log_{|X|}2$ according to our definition of
  $h(w)$.
  The two definitions are essentially the same,
  but ours does not depend on the alphabet,
  and it is more consistent with the usual definition of entropy of
  a shift.

  Next we recall from~\cite{AlmeidaVolkov2} some properties
  about the entropy of elements of $\wh F_{\pv V}(X)$, starting with the
  following:
  \begin{equation}\label{eq:entropy-and-multiplication}
    h(uv)=\max\{h(u),h(v)\}.
  \end{equation}
  In particular,
  the set $S_k$ of elements with
  entropy  less than $k$ is a subsemigroup
  of $\wh F_{\pv V}(X)$.
    It is well know that
  the elements of
  relatively free profinite semigroups have an operational
  interpretation. Under this interpretation,
  these elements are called \emph{implicit operations}; see~\cite{Almeida:book} for details.
  We may compose implicit operations.
  Then~\eqref{eq:entropy-and-multiplication} has the following
  generalization: if
    $w$ is a $r$-ary implicit operation
    over $\pv V$, then
    \begin{equation}\label{eq:entropy-and-composition}
    h(w(v_1,\ldots,v_r))\leq \max\{h(w)\cdot \log_{|X|}2\cdot  \log_{|X|}r,
    h(v_1),\ldots, h(v_r)\}.
    \end{equation}

  If $S$ is a finitely generated profinite semigroup, then
  the monoid $\mathrm {End}(S)$
  of continuous endomorphisms of $S$
  is a profinite monoid,
  considered with
  the pointwise topology,
  which coincides with the compact-open topology~\cite{Almeida:2005bshort}.
  For each $\p\in \mathrm {End}(S)$, elements of the
  subsemigroup of $\mathrm {End}(S)$
  generated by $\p$
  are of the form $\p^{\nu}$,
  where the exponent $\nu$ is an element of the profinite completion
  $\wh{\mathbb N}$
  of $\mathbb N$ (details can be found in~\cite{AlmeidaVolkov2}).
  It was proved in~\cite{AlmeidaVolkov2} that
  \begin{equation}
    \label{eq:entropy-and-iteration}
  \max_{x\in X}h(\p^{\nu}(x))\leq \max_{x\in X}h(\p(x))
  \end{equation}
   for every $w\in \wh F_{\pv V}(X)$ and $\nu\in \widetilde{\NN}\setminus\{0\}$.
  A subset $W$ of $\wh F_{\pv V}(X)$
  is \emph{closed under iterations}
  if $\p(x)\in W$ implies $\p^{\nu}(x)\in W$,
  for all $\nu\in\widetilde{\NN}\setminus\{0\}$ and $x\in X$.
  Note that by~\eqref{eq:entropy-and-iteration} the semigroup
  $S_k$ is closed under iterations.

  An important application of these results was given
  in~\cite{AlmeidaVolkov2}: it is easy to prove that an element $w$ of
  $\wh F_{\pv V}(X)$ belongs to the minimal ideal $K_X$ if and only if
  $h(w)=\log_2|X|$,
  hence one immediately concludes that
  $\wh F_{\pv V}(X)\setminus K_X$ is a semigroup
  closed under iterations and composition with $r$-ary implicit operations
  $w$ such that $h(w)< (\log_2|X|)^2\cdot\log_r|X|$.
    The minimal ideal $K_X$ is the $\J$-class associated to
  the full shift $X^{\mathbb Z}$. We are going to prove analogues of
  these results for sofic shifts.

  Let $F(w)$ be the set of finite factors of $w$.
  Suppose that $\ov{L(\mathscr X)}$ is factorial.
  If $w\in\ov{L(\mathscr X)}$
  then $F(w)\subseteq L(\mathscr X)$,
  and so $h(w)\leq h(\mathscr X)$.
  Note that $h(w)=h(\mathscr X)$
  if $w\in J(\mathscr X)$,
  since $F(w)=L(\mathscr X)$
  if $w\in J(\mathscr X)$. The following proposition
  gives a converse. It is an analog
  of~\cite[Corollary 4.4.9]{MarcusandLind},
  stating that if
  $\mathscr Y$ is a subshift strictly contained in a sofic shift
  $\mathscr X$ then $h(\mathscr Y)<h(\mathscr X)$.
  The proof of the proposition
  consists in a reduction to this result.

\begin{Prop}\label{p:entropy-strictly-smaller}
  Let $\pv V$ be a variety of finite semigroups containing
  $\mathbf {LSl}$.
  Suppose $\mathscr X$ is a non-periodic irreducible sofic shift
  such that $\ov{L(\mathscr X)}$ is
  a factorial subset of $\wh{F}_{\pv V}(X)$.
  If $w\in\ov{L(\mathscr X)}\setminus J(\mathscr X)$
  then $h(w)<h(\mathscr X)$.
\end{Prop}

\begin{proof}
  If $w\in X^+$ then $h(w)=0$.
  Since
  $\mathscr X$ is a non-periodic irreducible sofic shift,
  we have $h(\mathscr X)>0$~\cite[Corollary~4.4.9]{MarcusandLind},
  therefore we may suppose that
  $w\notin X^+$.

  Let $\{w_n\}$ be a sequence of elements of $L(\mathscr X)$ converging
  to $w$.
    Let $\lambda\colon X^+\to S_{\mathscr X}$
  be the syntactic morphism for $L(\mathscr X)$.
  Taking subsequences, we may suppose that
  $|w_n|\geq 2|S_{\mathscr X}|+n$
  for all $n$.
  From the proof of~\cite[Prop. 3.7.1]{Almeida:book}
  we conclude that given a homomorphism
  $\psi\colon B^+\to T$ onto a finite semigroup $T$, then,
  for every $z\in B^+$ such that
  $|z|\geq |T|$,
  there are $z_0,z_2\in B^\ast$ and $z_1\in B^+$
  such that $z=z_0z_1z_2$, $|z_0z_1|\leq |T|$ and $\psi(z_1)$ is idempotent.  Of course the dual result holds as well.
  Applying this result to
  the syntactic morphism $\lambda$,
  and to the prefix and the suffix
  of length $|S_{\mathscr X}|$ of $w_n$, we conclude
  that $w_n=w_{n,0}w_{n,1}w_{n,2}w_{n,3}w_{n,4}$
  for some
  $w_{n,i}\in X^\ast$ such that
  $|w_{n,0}w_{n,1}|\leq |S_{\mathscr X}|$ and
  $|w_{n,3}w_{n,4}|\leq |S_{\mathscr X}|$,
  $w_{n,1}$ and $w_{n,3}$ are non-empty words
  whose image under $\lambda$ is idempotent,
  and $|w_{n,2}|\geq n$.
  Taking subsequences, we may suppose that the sequence of tuples
  $\{(w_{n,0},w_{n,1},w_{n,2},w_{n,3},w_{n,4})\}$
  converges to
  $(w_{0},w_{1},w_{2},w_{3},w_{4})$.
  Then $w=w_{0}w_{1}w_{2}w_{3}w_{4}$.
  Note that
  $  (w_i)^\omega=\lim_{k\to \infty}\lim_{n\to \infty}(w_{n,i})^{k!}$.
Since $\lambda(w_{n,1})$ and $\lambda(w_{n,3})$ are idempotents
and $\lambda(w_{n,1}w_{n,2}w_{n,3})\neq 0$,
the word $(w_{n,1})^{k!}w_{n,2}(w_{n,3})^{k!}$ belongs
to  $L(\mathscr X)$ for all $k$, $n$.
Therefore, the element $v=(w_1)^\omega w_2 (w_3)^\omega$ belongs to
$\ov{L(\mathscr X)}$.

We have  $h(w)=\max\{h(w_0),h(w_1w_2w_3),h(w_4)\}$
by~\eqref{eq:entropy-and-multiplication}.
But $w_0$ and $w_4$ belong to $X^+$, because
the words $w_{n,0}$ and $w_{n,4}$ have bounded length.
Therefore $h(w_0)=h(w_4)=0$, and so
$h(w)=h(w_1w_2w_3)$.
Let $\rho$ be the ternary implicit operation
$x_1^\omega x_2x_3^\omega$, on the three-letter alphabet
$\{x_1,x_1,x_3\}$.
Note that $v=\rho(w_1,w_2,w_3)$.
Since $h(\rho)=\max\{h(x_1^\omega),h(x_2),h(x_3^\omega)\}=0$,
it follows from~\eqref{eq:entropy-and-composition}
that
\begin{equation*}
h(v)=\max\{h(w_1),h(w_2),h(w_3)\}=h(w_1w_2w_3)=h(w).
\end{equation*}
So, it suffices to prove that
  $h(v)<h(\mathscr X)$.

  By Lemma~\ref{bottomcriterion},
  the hypothesis  $w\in \ov{L(\mathscr X)}\setminus J(\mathscr X)$
  implies the existence of a word
  $u\in L(\mathscr X)$ such that $u$ is not a factor of $w$.

  The language $F(v)$ is clearly factorial,
  and it is prolongable because
  $v=w_1^\omega vw_3 ^\omega$
  belongs to $\wh{F}_{\pv V}(X)\,v\,\wh{F}_{\pv  V}(X)$.
  Therefore $F(v)=L(\mathscr Y)$ for some
  shift $\mathscr Y$.
    Since $v\in \ov{L(\mathscr X)}$
  and $\ov{L(\mathscr X)}$ is factorial,
  we know that $F(v)\subseteq L(\mathscr X)$
  and
  $F(w_1^\omega)\cup F(w_2^\omega)\subseteq L(\mathscr X)$.
  The set $F(w_1^\omega)\cup F(w_2^\omega)$
  is the language of the union of two periodic shifts,
  whence
  $F(w_1^\omega)\cup F(w_2^\omega)\neq L(\mathscr X)$,
  else $\mathscr X$ would not be irreducible non-periodic.
  Hence, there is a word $u'$ belonging to $L(\mathscr X)$
  but not to $F(w_1^\omega)\cup F(w_2^\omega)$.
  Since $\mathscr X$ is irreducible,
  there are $x,y$
  such that the word
  $u''=uxu'yu$ belongs to $L(\mathscr X)$.

  Suppose that $u''$ is a factor of $v=w_1^\omega w_2w_3^\omega$.
  The word
  $u''$ is not a factor
  of $w_1^\omega$ or $w_3^\omega$,
  (because $u'$ is a factor of $u''$),
  nor of $w_2$ (because $u$ is a factor of $u''$ and
  $w_2$ is a factor of $w$).
  By \cite[Lemma 8.2]{AlmeidaVolkov2}
  and the fact that $w_2\in \wh{F}_{\pv V}(X)\setminus X^+$,
  we have $u''=sp$ for some
  words $s$ and $p$ such that
  $s$ is a suffix of
  $w_1^\omega$ and $p$ is a prefix of $w_2$,
  or such that
  $s$ is a suffix of
  $w_2$ and $p$ is a prefix of $w_3^\omega$.
  Suppose the first case occurs.
  Then $s$ does not have $u'$ as factor, thus
  $s$ is a strict prefix of $uxu'$. But then $yu$ is a suffix of $p$,
  which is impossible, since $u$ is not a factor of $w_2$.
  The first case leads to an absurdity, and similarly so does
  the
  second case. Hence
  $u''$ is not a factor of $v$.

  Therefore $L(\mathscr Y)\subsetneq L(\mathscr X)$,
  that is, $\mathscr Y\subsetneq \mathscr X$.
  By~\cite[Corollary 4.4.9]{MarcusandLind},
  this implies
  $h(\mathscr Y)<h(\mathscr X)$.
  Then it follows trivially from
  equality $F(v)=L(\mathscr Y)$
  that $h(v)<h(\mathscr X)$.
\end{proof}

Proposition~\ref{p:entropy-strictly-smaller}
states that
$\ov{L(\mathscr X)}\setminus J(\mathscr X)$
is contained in the semigroup $S_{h(\mathscr X)}$,
stable under iterations.
In general
$\ov{L(\mathscr X)}\setminus J(\mathscr X)$ is not stable under
iterations,
but if we restrict to
endomorphisms such that
$\varphi(L(\mathscr X))\subseteq \ov{L(\mathscr X)}$
then we obtain a positive result.

Another example in which one obtains a result weaker than
in the case of the full shift,
is the following: if $I(\mathscr X)$ is the ideal generated by
$J(\mathscr X)$,
then
$S_{h(\mathscr X)}\subseteq \wh F_{\pv V}(X)\setminus I(\mathscr X)$,
but in general
$S_{h(\mathscr X)}\neq\wh F_{\pv V}(X)\setminus I(\mathscr X)$
and $\wh F_{\pv V}(X)\setminus I(\mathscr X)$
is not stable under iteration.
For example,
let $\mathscr X$ be a shift
such that $\mathsf{alph}(\mathscr X)=\{a,b\}$
and let $X=\{a,b,c\}$.
Let $u\in J(\mathscr X)$.
Consider the
endomorphisms $\psi$ and $\p$
given by
\begin{equation*}
  \psi(a)=a,\,\psi(b)=c,\,\psi(c)=b,\quad\quad
  \p(a)=a,\,\p(b)=\psi(u),\,\p(c)=b.
\end{equation*}
Then $h(\psi(u))=h(u)$
and $\p(\psi(u))=u$.
Since $\psi(u)\notin I(\mathscr X)$
and $u\in I(\mathscr X)$, it follows
that
$S_{h(\mathscr X)}\neq\wh F_{\pv V}(X)\setminus I(\mathscr X)$
and that $\wh F_{\pv V}(X)\setminus I(\mathscr X)$
is not stable under iteration.
On the other hand, $\wh F_{\pv V}(X)\setminus I(\mathscr X)$
is a semigroup whenever $\pv V=\pv A\malce \pv V$~\cite{SteinbergArxiv:2008}.

\def\malce{\mathbin{\hbox{$\bigcirc$\rlap{\kern-7.75pt\raise0,50pt\hbox{${\tt
  m}$}}}}}\def\cprime{$'$} \def\cprime{$'$} \def\cprime{$'$} \def\cprime{$'$}
  \def\cprime{$'$} \def\cprime{$'$} \def\cprime{$'$} \def\cprime{$'$}
  \def\cprime{$'$}

\end{document}